\theoremstyle{plain}                 
\newtheorem{theorem}{Theorem}[section]
\newtheorem{conjecture}[theorem]{Conjecture}
\newtheorem{proposition}[theorem]{Proposition} 
\newtheorem{corollary}[theorem]{Corollary}     
\newtheorem{lemma}[theorem]{Lemma}        
\theoremstyle{definition}           
\newtheorem{definition}{Definition}    
\theoremstyle{remark}       
\newtheorem{remark}{Remark}
\def\ch{\mathrm{ch}}
\def\oM{\overline{\mathcal{M}}}
\def\CP1{\mathbb{C}\mathrm{P}^1}
\def\zz{{\scriptstyle\mathcal{Z}}}
\newcommand{\Klog}{\omega_{\rm log}}
\newcommand{\cM}{{\mathcal{M}}}
\newcommand{\cL}{{\mathcal{L}}}
\newcommand{\SSS}{\mathcal{S}}
\newcommand{\sta}{\mathcal}
\newcommand{\MMMbar}{\overline{\sta M}}
\newcommand{\cor}[1]{{\langle} \, #1 \, {\rangle}}
\newcommand{\II}[2]{\int_{\overline{\mathcal{M}}_{#1, #2}} \! }
\DeclarePairedDelimiter\floor{\lfloor}{\rfloor}
\newcommand{\bea}{\begin{eqnarray}}
\newcommand{\eea}{\end{eqnarray}}
\newcommand{\ben}{\begin{eqnarray*}}
\newcommand{\een}{\end{eqnarray*}}
\newcommand{\be}{\begin{equation}}
\newcommand{\ee}{\end{equation}}
\newcommand{\highlightr}[1]{%
  \colorbox{red!50}{$\displaystyle#1$}}
\newcommand{\highlightg}[1]{%
  \colorbox{green!50}{$\displaystyle#1$}}
\begin{document}

\title[A natural basis for intersection numbers]{A natural basis for intersection numbers}
\author[B.~Eynard]{B.~Eynard}
\address{B.~E.: Inst. de Physique Th\'{e}orique (IPhT),
Commissariat \`{a}  l'Energie atomique,
Orme des Merisiers Bat 774,
91191 Gif-sur-Yvette, 
Paris, France \&
Inst. Hautes \'{E}tudes Scientifiques (IHES),
35 Route de Chartres, 
91440 Bures-sur-Yvette, 
Paris, France }
\email{bertrand.eynard@ipht.fr}
\author[D.~Lewa\'{n}ski]{D.~Lewa\'{n}ski}
\address{D.~L.: 
Inst. de Physique Th\'{e}orique (IPhT),
Commissariat \`{a}  l'Energie atomique,
Orme des Merisiers Bat 774,
91191 Gif-sur-Yvette, 
Paris, France \&
Inst. Hautes \'{E}tudes Scientifiques (IHES),
35 Route de Chartres, 
91440 Bures-sur-Yvette, 
Paris, France 
}
\email{danilo.lewanski@ipht.fr}
\begin{abstract} We advertise elementary symmetric polynomials $e_i$ as the natural basis for generating series $A_{g,n}$ of intersection numbers of $\psi$-classes on the moduli space of stable curves of genus $g$ with $n$ marked points. Closed formulae for $A_{g,n}$ are known for genera $0$ and $1$ \textemdash\; this approach provides formulae for $g=2,3,4$, together with an algorithm to compute the formula for any $g$. 

The claimed naturality of the $e_i$ basis relies in the unexpected vanishing of some coefficients with a clear pattern: we conjecture that $A_{g,n}$ can have at most $g$ factors $e_i$, with $i>1$, in its expansion. This observation promotes a paradigm for the intersection numbers with more general cohomology classes. As an application of the conjecture, we find new integral representations of $A_{g,n}$, which recover expressions for the Weil-Petersson volumes in terms of Bessel functions. 
\end{abstract}
\maketitle
\tableofcontents

\noindent


\section{Introduction}


Over the last 30 years, research in mathematics and in theoretical physics have unveiled a deep interaction between the following fields:
\begin{enumerate}
\item models in theoretical physics (mainly arising from string theory and random matrix models), 
\item the algebraic geometry of moduli spaces of curves, 
\item the mathematical physics of integrable systems and integrable hierarchies.
\end{enumerate}

\noindent
Let us present the core idea of this interaction. Consider correlators 
$$ \langle \eta_{d_1} \cdots \eta_{d_n} \rangle_g^M $$
of some theoretical physics model $M$ as above depending on a genus parameter $g$ and on a certain number $n$ of insertions. On the other hand, consider a partition function $\mathcal{Z}(\hbar, \textbf{t})$ depending on a formal parameter $\hbar$ and infinitely many parameters $t_i$ in such a way that it makes sense to define its correlators
\begin{equation}
\langle \sigma_{d_1} \cdots \sigma_{d_n} \rangle_g^{\mathcal{Z}} := \left[\frac{\hbar^{2g - 2 + n}}{n!} t_{d_1} \cdots t_{d_n}\right]. \log \mathcal{Z}(\hbar, \textbf{t})
\end{equation}
where $[x^d].f(x)$ is the formal operator which extracts the coefficient of $x^d$ in $f(x)$.
Assume moreover that $\mathcal{Z}$ is the tau-function of some integrable hierarchy (e.g. KdV, KP, $2$D Toda, BKP, $\dots$), that is, it satisfies a determined infinite list of PDEs in the infinite set of times $t_i$. Finally, define the correlators 
\begin{equation}
 \langle \tau_{d_1} \cdots \tau_{d_n} \rangle_g^{\Omega} := \int_{\overline{\mathcal{M}}_{g,n}} \Omega_{g,n} \psi_1^{d_1} \cdots \psi_n^{d_n}, \qquad \qquad \Omega_{g,n} \in H^{*}(\overline{\mathcal{M}}_{g,n}),
\end{equation}
where $\overline{\mathcal M}_{g,n}$ is the $3g - 3 + n$ complex dimensional
moduli space of stable curves \((C; p_1, \ldots, p_n)\) of genus $g$, with $n$
distinct labeled marked points and the cohomology classes $\psi_i :=
c_1(\mathcal{L}_i)$ are the first Chern classes of the line bundle
$\mathcal{L}_i$ cotangent at the $i$-th marked point. Here $\Omega$ is a whole
collection of classes $\Omega_{g,n}$ for every $(g,n)$ satisfying $2g - 2 + n >
0$. In practice, these collections $\Omega$ form Cohomological Field Theories
(CohFTs), which means that several compatibility conditions between different
$\Omega_{g_i, n_i}$ in the same collection hold.

The interaction we are talking about can be thought as triples $(M,\mathcal{Z},\Omega)$ such that
\begin{equation}
 \langle \eta_{d_1} \cdots \eta_{d_n} \rangle_g^{M}  \sim  \langle \sigma_{d_1} \cdots \sigma_{d_n} \rangle_g^{\mathcal{Z}}  \sim  \langle \tau_{d_1} \cdots \tau_{d_n} \rangle_g^{\Omega},
\end{equation}
where the symbol $\sim$ in principle means proportionality up to the multiplication of some combinatorial prefactor which is $d_i$-dependent, but in some cases is a straightforward equality.

Let us make an example of this interaction. In fact, this example is the simplest possible instance, historically arose first,  and in a way can be thought as the generic-local model for all other examples of such interaction.  In 1991 two different approaches to $2$-dimensional quantum gravity, used as an easier model to understand the theory in higher dimension, existed. Both had to deal with the problematic integration over infinite dimensional spaces. The first was based on triangulation of surfaces, the second on the integration over the space of conformal metrics, which translates to the computation of correlators 
\begin{equation}
 \langle \tau_{d_1} \cdots \tau_{d_n} \rangle_g := \int_{\overline{\mathcal{M}}_{g,n}} \psi_1^{d_1} \cdots \psi_n^{d_n},
\end{equation}
that is, $\Omega_{g,n} = 1$ is the fundamental class of the moduli space, for every $(g,n)$. Witten conjectured these two approaches were the same. More precisely, as the first approach was known to obey the KdV integrable hierarchy, the initial conditions of both approaches were known to coincide, and both approaches were known to satisfy the so-called string equation, Witten conjectured that the second approach obey (and therefore is completely determined by) the KdV integrable hierarchy.
\begin{theorem}[Witten conjecture \cite{W91}, Kontsevich theorem \cite{K92}] \label{thm:WK}  The generating function
\begin{equation}\label{gen}
U=\frac{\partial^2F}{\partial t_0^2}, \qquad \qquad 
F(t_0, t_1, \ldots)= \sum_{g=0}^{\infty} \sum_{d_0, d_1, \dots = 0}^{\infty}
\langle \tau_{0}^{d_0} \tau_{1}^{d_1} \cdots \rangle_{g}
 \prod_{i=0} \frac{t_i^{d_i}}{d_i!}
\end{equation}
 satisfies the classical KdV equation
\begin{equation} \label{KdV}
\frac{\partial U}{\partial t_1}=U\frac{\partial U}{\partial t_0}+\frac{1}{12}\frac{\partial^3 U}{\partial t_0^3}.
\end{equation}
\end{theorem}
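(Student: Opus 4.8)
The plan is to follow Kontsevich's strategy: realise the generating function $F$ as the logarithm of a matrix integral and then recognise that integral as a $\tau$-function of the KdV hierarchy, whose first nontrivial flow is precisely \eqref{KdV}. The first and hardest step is to replace $\overline{\mathcal{M}}_{g,n}$ by the combinatorial moduli space. By the Jenkins--Strebel theorem, a point of $\mathcal{M}_{g,n}$ together with a vector of positive perimeters $(p_1,\dots,p_n)$ determines a unique Jenkins--Strebel quadratic differential, hence a metric ribbon graph; this yields an orbifold cell decomposition of $\mathcal{M}_{g,n}\times\mathbb{R}_{+}^{n}$ indexed by trivalent ribbon graphs $\Gamma$ of type $(g,n)$, the cell of $\Gamma$ being $\mathbb{R}_{+}^{E(\Gamma)}/\Aut\Gamma$. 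The classes $\psi_i$ admit a piecewise-linear representative on this model (the Kontsevich $2$-form), and integrating its top power cell by cell produces Kontsevich's main identity
\begin{equation}
\sum_{|\mathbf d|=3g-3+n}\langle\tau_{d_1}\cdots\tau_{d_n}\rangle_g\ \prod_{i=1}^{n}\frac{(2d_i-1)!!}{\lambda_i^{\,2d_i+1}}
=\sum_{\Gamma}\frac{2^{-|V(\Gamma)|}}{|\Aut\Gamma|}\ \prod_{e\in E(\Gamma)}\frac{2}{\lambda_{i(e)}+\lambda_{j(e)}},
\end{equation}
whose right-hand side is the Laplace transform in the perimeters of the Euclidean volume of each cell. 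I expect \emph{this} identification to be the main obstacle: one has to integrate on an orbifold with corners, control the strata of non-generic Strebel differentials (where perimeters degenerate or edges collapse), and check that the representative of each $\psi_i$ is tame enough that Stokes' theorem produces no spurious boundary contributions.

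Next I would resum the ribbon graphs. The right-hand side above is exactly the fatgraph (Feynman diagram) expansion of the Hermitian Airy matrix integral
\begin{equation}
Z_N(\Lambda)=\frac{\displaystyle\int_{\mathcal H_N}\exp\!\left(-\tfrac12\operatorname{tr}(\Lambda M^2)+\tfrac{\I}{6}\operatorname{tr}(M^3)\right)\dd M}{\displaystyle\int_{\mathcal H_N}\exp\!\left(-\tfrac12\operatorname{tr}(\Lambda M^2)\right)\dd M},
\end{equation}
where the quadratic term produces the edge weights $2/(\lambda_i+\lambda_j)$ and the cubic term the trivalent ribbon vertices. After the Miwa substitution $t_k=-(2k-1)!!\,\operatorname{tr}\Lambda^{-(2k+1)}$, the identity above identifies $\log Z_N$ with $F(\mathbf t)$ up to the unstable contributions, uniformly in $N$; it then remains to show that $Z_N$ is a KdV $\tau$-function.

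Finally, I would establish the KdV property. Following Kontsevich, one rewrites $Z_N$ — via a shift of $M$, completing the cube, and the Harish-Chandra--Itzykson--Zuber angular integral — as a ratio of $N\times N$ determinants built from the Airy function and its derivatives; a determinantal expression of this shape describes a point of the Sato Grassmannian invariant under the odd KdV flows. Equivalently, one checks that $Z_N$ is annihilated by the Virasoro operators $L_m$ for $m\ge-1$ (with $L_{-1}$ the string equation), and that for series of this type the Virasoro constraints are equivalent to the KdV hierarchy together with the string equation. Rewriting the first nontrivial flow of that hierarchy in terms of $U=\partial_{t_0}^2 F$ gives exactly \eqref{KdV}, and letting $N\to\infty$ frees all the times $t_k$ and finishes the proof. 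A logically independent route to the same statement is Mirzakhani's: the topological recursion for Weil--Petersson volumes deduced from the McShane identity implies, after the appropriate rescaling, the Virasoro constraints for $F$ — an argument in the same circle of ideas as the Bessel-function formulae discussed later in this paper.
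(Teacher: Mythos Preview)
The paper does not give its own proof of this statement: Theorem~\ref{thm:WK} is quoted as background, with proofs delegated to the citations \cite{W91,K92}. So there is nothing in the paper to compare your argument against; the ``paper's proof'' is literally Kontsevich's.

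Your outline is a faithful sketch of that proof --- Jenkins--Strebel cell decomposition, the main identity, the Airy matrix model, and the determinantal/Grassmannian verification of KdV --- and your identification of the delicate step (integrating the piecewise representatives of $\psi_i$ over the orbifold cells without boundary leakage) is accurate. The mention of Mirzakhani's route via Weil--Petersson recursion is also apt and indeed in the spirit of Section~\ref{sec:gdimintegral}. Nothing to correct; just be aware that, as written, this is a proof \emph{summary} rather than a proof: each of the three blocks (combinatorial compactification and the $\psi$-class representative, the matrix-model resummation with the unstable $(0,1)$ and $(0,2)$ terms handled, and the Sato/Virasoro identification) hides substantial work that you would have to supply in full if this were to stand as an independent argument.
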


\noindent
One can think of this instance, in terms of the interaction presented above, as of the triple
$$
(M = 2D\text{ quantum gravity}, \mathcal{Z}  = \text{ string solution of the KdV hierarchy}, \Omega_{g,n} = 1).
$$
This interaction is just the tip of an entire iceberg of such triples: many are discovered, and yet the depth of the iceberg is still far from being assessed.

What is known about the explicit generating series of $ \langle \tau_{d_1} \cdots \tau_{d_n} \rangle_g$? The goal of this paper is two-fold: on the one hand it provides new closed formulae, on the other it notices (and it conjectures) an unexpected vanishing of the coefficients with a precise pattern, providing the proof for several cases.

\subsection{Low genus cases: $g=0$ and $g=1$}
Let us start by the low genus cases. The simplest equations these correlators satisfy are the \textit{string} and \textit{dilaton} equations:
\begin{align}
	& \cor{ \tau_{d_{1}} \ldots \tau_{d_{n}} \tau_{0} }_{g} =
	\sum_{i=1}^{n} 
	\cor{ \tau_{d_{1}}\ldots \tau_{d_{i}-1}\ldots \tau_{d_{n} }}_{g},
	\\
	& \cor{ \tau_{d_{1}} \ldots \tau_{d_{n}} \tau_{1} }_{g} =
	(2g-2+n)
	\cor{ \tau_{d_{1}} \ldots \tau_{d_{n}}}_{g}.
\end{align}

In genus zero, as the dimension of $\overline{\mathcal{M}}_{0,n}$ is $n-3$ and $\tau_i$ corresponds to cohomological degree $i$, insertions $\tau_0$ must appear, so it is customary to exploit string equation to prove
\begin{equation}\label{eq:g0int}
\II 0n \psi_1^{d_1} \cdots \psi_n^{d_n} = \binom{n-3}{d_1, \dots, d_n}.
\end{equation}
\begin{definition}
Define for $\textbf{x} = (x_1, \dots, x_n)$ the amplitudes $A_{g,n}$ as
\begin{align}
	\label{eq:Agndef}
	A_{g,n}(\textbf{x}) &=  \II g n 
	\frac{1}{\prod_{i=1}(1 - x_i \psi_i)} 
\\&= \sum_{d_1, \dots, d_n = 0} 
\cor{ \tau_{d_{1}} \ldots \tau_{d_{n}}}_{g} \; x_1^{d_1} \cdots x_n^{d_n}, 
\label{eq:Agnmonom}
\end{align}
and \footnote{summations without upper bound in this paper refer to all indices being summed up to, in principle, infinity. Although in some cases, like in this one, the sum is finite for dimension reasons.} the normalized amplitudes as
\begin{equation}\label{eq:Agndefnorm}
\tilde{A}_{g,n}  = {24^g g!} \, A_{g,n}.
\end{equation}
\end{definition}
\noindent
Equation \eqref{eq:g0int} translates into $A_{0,n}(\textbf{x}) = (x_1 + \dots + x_n)^{n-3}$, and rewriting it in terms of symmetric elementary polynomials $e_i$ gives
\begin{equation}\label{eq:A0n}
A_{0,n} = \tilde{A}_{0,n} = e_1^{n-3}.
\end{equation}
Notice that only factors of the type $e_1$ appear in genus zero, and that there is no obvious reason a priori why factors $e_j$, $j>1$, wouldn't show up.
The formula in genus one is also known (see e.g. \cite{LiuXu_npoint} and references within), and it reads
\begin{equation*}
\int_{\overline{\mathcal{M}}_{1,n}} \psi_1^{d_1} \cdots \psi_n^{d_n} 
=
 \frac{1}{24} \biggl( \binom{n}{d_1, \ldots, d_n} - \sum_{\substack{b_1, \ldots, b_n \\ b_i \in \{0,1\}}} \binom{n - (b_1 + \cdots + b_n)}{d_1 - b_1, \ldots, d_n - b_n} (b_1 + \cdots +b_n - 2)! 
 \biggr), 
\end{equation*}
where the convention that negative factorials vanish is used. In terms of the normalized amplitudes it becomes
\begin{equation}
\tilde{A}_{1,n} = e_1^{n} - \sum_{k=2}^n (k-2)! e_k \; e_1^{n-k}.
\label{eq:A1n}
\end{equation}
Again, the factors $e_j$, $j>1$, are strangely scarce in the expansion above: the formula holds for every $n$, therefore the dimension of the moduli space could be as big as desired.


\subsection{Results of the paper and main conjecture}
For $2g - 2 + n > 0$, the amplitudes $A_{g,n}$ are manifestly symmetric
polynomials of homogeneous degree $3g - 3 + n$. They can therefore be expressed
in the basis of elementary symmetric polynomials $e_{\Lambda}
= e_{\Lambda_1} \cdots e_{\Lambda_{\ell(\Lambda)}}
$ with partitions \( \Lambda \) of size \( |\Lambda|  = 3g-3+n \):
\begin{equation}
	A_{g,n}(\textbf{x}) 
	=
	\sum_{|\Lambda| = 3g-3+n}
	\!\!\!\! D_{g,n}(\Lambda) e_{\Lambda}(\textbf{x})  \label{eq:AgnInDgn}
\end{equation}
for some coefficients $D_{g,n}(\Lambda)$. Observe that any $\Lambda_i > n $ implies the vanishing of the whole summand, hence we can consider partitions with parts bounded by $n$.

%

Let us first distinguish without loss of generality the elements $\Lambda_i = 1$ in the sum above: 
\begin{equation}
	A_{g,n}(\textbf{x}) 
	=
	\sum_{\substack{
	|\lambda| \leq 3g-3+n 
	\\
	 2 \leq \lambda_{i} \leq n 
	 }} 
	\!\!\!\! C_{g,n}(\lambda) \; e_{\lambda} \; e_1^{3g - 3 + n - |\lambda|}
	 \label{eq:AgnInCgn}
\end{equation}
where we can set without ambiguity 
\begin{equation}
	\label{eq:defCgn}
C_{g,n}(\lambda) \coloneqq D_{g,n}(\lambda \sqcup
(1)^{3g-3+n-|\lambda|}),
\end{equation}
 with \( \lambda \sqcup (1) \coloneqq (\lambda_{1} ,\ldots, \lambda_{n} , 1)\).
We first show in Section \ref{sec:virasoro} (see corollary \ref{cor:33} and equation \ref{eq:string1}) the following property.

\begin{proposition}
The coefficients $C_{g,n}(\lambda)$ are independent of $n$.
\end{proposition}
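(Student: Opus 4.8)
The plan is to derive from the string equation (the $L_{-1}$ Virasoro constraint; the dilaton equation is not needed) a recursion relating $A_{g,n+1}$ to $A_{g,n}$, and then to read off the claim by comparing coefficients in the $e_i$-basis. Concretely, writing $A_{g,n+1}$ as a polynomial in $x_1,\dots,x_{n+1}$ and setting $x_{n+1}=0$ extracts the summands with $d_{n+1}=0$, i.e. the generating series of $\cor{\tau_{d_1}\cdots\tau_{d_n}\tau_0}_g$. Applying the string equation to each such correlator and re-indexing the $i$-th term $\cor{\cdots\tau_{d_i-1}\cdots}_g$ as $x_i$ times a shifted sum — legitimate because correlators with a $\tau_{-1}$ insertion vanish — yields
\[ A_{g,n+1}(x_1,\dots,x_n,0)\;=\;(x_1+\cdots+x_n)\,A_{g,n}(x_1,\dots,x_n)\;=\;e_1\cdot A_{g,n}. \]
Both sides are homogeneous of degree $3g-2+n$, which is a useful consistency check.

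Next I would track how the specialization $x_{n+1}=0$ acts on the basis. Writing $e_j^{(m)}$ for the $j$-th elementary symmetric polynomial in $m$ variables, one has $e_j^{(n+1)}(x_1,\dots,x_n,0)=e_j^{(n)}(x_1,\dots,x_n)$ for $1\le j\le n$ and $e_{n+1}^{(n+1)}(x_1,\dots,x_n,0)=0$. Hence, expanding $A_{g,n+1}$ as in \eqref{eq:AgnInCgn} (in $n+1$ variables) and specializing, the monomials $e_\lambda e_1^{\bullet}$ whose partition $\lambda$ has a part equal to $n+1$ are killed, and what remains is $\sum_{\lambda:\,\max\lambda\le n}C_{g,n+1}(\lambda)\,e_\lambda\,e_1^{\,3g-2+n-|\lambda|}$, now in $n$ variables. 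Comparing this with $e_1\cdot A_{g,n}=\sum_{\lambda}C_{g,n}(\lambda)\,e_\lambda\,e_1^{\,3g-2+n-|\lambda|}$, and using that $e_1,\dots,e_n$ are algebraically independent — so the monomials $e_\lambda e_1^{\bullet}$ attached to distinct partitions $\lambda$ with parts in $\{2,\dots,n\}$ are linearly independent — one obtains $C_{g,n+1}(\lambda)=C_{g,n}(\lambda)$ for every $\lambda$ with $\max\lambda\le n$.

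To conclude, I would run an easy induction on $n$: for a fixed partition $\lambda$ with all parts $\ge 2$ and $m:=\max\lambda$, the previous paragraph applied for each $n\ge m$ shows $C_{g,n+1}(\lambda)=C_{g,n}(\lambda)$, so $C_{g,n}(\lambda)$ is constant on the entire range $n\ge m$ where it is defined (for $n<m$ the polynomial $e_\lambda$ vanishes identically in $n$ variables, so no coefficient is attached). I do not expect a serious obstacle here: the only delicate point is the bookkeeping in the middle step — making precise that the monomials occurring in $A_{g,n+1}$ but not in $A_{g,n}$ are \emph{exactly} those whose partition has a part $n+1$, and that these coincide with the ones annihilated by $x_{n+1}=0$, so that the comparison loses no information about the shared monomials; keeping the degree count and the bound "parts $\le$ number of variables" straight is what needs care, while the analytic input (the string equation) is classical.
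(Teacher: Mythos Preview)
Your proposal is correct and follows essentially the same route as the paper: derive the string equation in the form $A_{g,n+1}|_{x_{n+1}=0}=e_1\,A_{g,n}$, use $e_j^{(n+1)}|_{x_{n+1}=0}=e_j^{(n)}$ (with $e_{n+1}^{(n+1)}\to 0$), and match coefficients by algebraic independence of $e_1,\dots,e_n$ to get $C_{g,n+1}(\lambda)=C_{g,n}(\lambda)$ whenever $\max\lambda\le n$ and $|\lambda|\le 3g-3+n$. The paper records one extra consequence you gloss over---that the top-size terms with $|\lambda|=3g-2+n$ and all parts $\le n$ force $C_{g,n+1}(\lambda)=0$ (their equation \eqref{eq:string2}, leading to Lemma~\ref{lem:1})---but this is not needed for the proposition as stated.
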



We can therefore drop the corresponding index. Then, computing the
coefficients \( C_{g}(\lambda) \) we observed that all coefficients
with \( \ell(\lambda) > g \) happen to be vanishing:
\begin{conjecture}[Main conjecture]
	\label{conj:main} For $g \geq 0, n\geq 1$ and $2g - 2 + n>0$ we have
	\begin{equation}
		C_{g}(\lambda)= 0, \quad \text{ for } \quad  \ell(\lambda) >g.
	\end{equation}
	Equivalently,
\begin{equation}
	A_{g,n}(\textbf{x}) 
	=
	\sum_{\substack{|\lambda| \leq 3g-3+n \\ \lambda_{i} \geq 2 \\ l(\lambda) \leq g}}
	C_{g}(\lambda) \; e_{\lambda}  \; e_{1} ^{3g-3+n-|\lambda|}.   \label{eq:conj}
\end{equation}
\end{conjecture}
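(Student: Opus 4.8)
The plan is to attack Conjecture~\ref{conj:main} by induction on the genus, first collapsing it --- via the Proposition above --- to a single statement per genus. Since $\tilde A_{g,n}$ is homogeneous of degree $3g-3+n$ and the $C_g(\lambda)$ do not depend on $n$, one may write
\[
\tilde A_{g,n}(\textbf{x}) \;=\; e_1^{\,3g-3+n}\; \tilde P_g\!\left(\tfrac{e_2}{e_1^{2}},\,\tfrac{e_3}{e_1^{3}},\,\dots\right),
\]
where $\tilde P_g$ is a formal series \emph{independent of $n$} (a genuine polynomial once one sets $e_i/e_1^{i}=0$ for $i>n$), and the conjecture says precisely that every monomial of $\tilde P_g$ involves at most $g$ of the variables $e_i/e_1^{i}$; this holds for $g=0$ ($\tilde P_0=1$) and $g=1$ ($\tilde P_1 = 1-\sum_{k\geq 2}(k-2)!\,e_k/e_1^{k}$) by \eqref{eq:A0n} and \eqref{eq:A1n}. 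It then remains to propagate this bound along a recursion that descends in genus.

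The recursion I would use is the DVV/Virasoro recursion for $\cor{\tau_{d_1}\ldots\tau_{d_n}}_g$, equivalently the KdV hierarchy together with the string equation from Section~\ref{sec:virasoro}. In generating-series form it builds $A_{g,n}$ from three inputs: (i) $A_{g,n-1}$ acted on by an explicit shift/differential operator in the $x_i$ (same genus, one marked point fewer); (ii) a non-separating term obtained by contracting two marked points of $A_{g-1,n+1}$ against the kernel assembled from the double factorials $(2a+1)!!\,(2b+1)!!$; and (iii) separating terms $\sum_{g_1+g_2=g}A_{g_1,n_1+1}\,A_{g_2,n_2+1}$ summed over all distributions of the remaining points. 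As an orienting check, in the stable range the string and dilaton equations translate into $A_{g,n}(\textbf{x})\big|_{x_n=0}=e_1(x_1,\dots,x_{n-1})\,A_{g,n-1}$ and $[x_n^{1}]\,A_{g,n}=(2g-3+n)\,A_{g,n-1}$; both preserve the shape \eqref{eq:conj}, since multiplication by $e_1$ adds no factor $e_{\geq 2}$ --- but these alone do not determine $A_{g,n}$, so the remaining content of DVV is what must be controlled.

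Run the outer induction on $g$ and an inner induction on $n$ (note that term (i), and term (iii) when $g_1$ or $g_2$ equals $0$, stay in genus $g$ with strictly fewer points). The lower inputs $A_{g-1,n+1}$ and $A_{g_i,n_i+1}$ carry at most $g-1$, resp.\ $g_i$, factors $e_{\geq 2}$ by hypothesis. For term (iii) the count is, a priori, additive and compatible: since $e_k(x_I,z)=e_k(x_I)+z\,e_{k-1}(x_I)$, extracting a coefficient in a node-variable $z$ turns $e_2$ into $e_1$ and $e_k$ ($k\geq 3$) into $e_{k-1}$, hence only preserves or lowers the number of $e_{\geq 2}$-factors of each component, while $g_1+g_2=g$ is exactly the permitted total. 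I expect the main obstacle to be that DVV natively computes the \emph{non-symmetric} coefficients $[x_1^{k}]A_{g,n}$: one must (a) reassemble the symmetric polynomial $A_{g,n}$ --- equivalently pass from the monomial basis $\prod_i\psi_i^{d_i}$ to the $e_\lambda$ basis, a change of basis that scrambles both degree and length --- and show the length bound survives it; and (b) in term (iii) handle the fact that each factor $A_{g_i}(x_{I_i},\cdot)$ is symmetric only in its sub-alphabet $x_{I_i}$, whereas a partial elementary symmetric polynomial $e_a(x_{I_i})$ does not lift cleanly to $e_\bullet(x_1,\dots,x_n)$ --- only the sum over all splittings is symmetric, so one needs exact control of how these partial symmetric functions recombine. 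This reconciliation of the DVV recursion with the $e$-basis is where I expect the difficulty to concentrate.

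For $g=2,3,4$ the obstacle can be bypassed: solving the Virasoro constraints produces a closed formula for $A_{g,n}$ valid for all $n$, and since $n$-independence fixes the finitely many relevant $C_g(\lambda)$ once $n$ is large enough, the vanishing $C_g(\lambda)=0$ for $\ell(\lambda)>g$ is then immediate. A possibly cleaner route to the general statement is to first transport the problem to a fixed moduli space via the $\psi$--$\kappa$ correspondence (Kaufmann--Manin--Zagier, Arbarello--Cornalba), rewriting $A_{g,n}$ as a $\kappa_1$-deformed $\psi$-integral --- the same mechanism that ties the statement to the Weil--Petersson volumes advertised in the abstract --- after which $\ell(\lambda)\leq g$ becomes a bound on the complexity of $\kappa$-monomials, reminiscent of known vanishing phenomena in the tautological ring; whether this is genuinely easier I cannot say.
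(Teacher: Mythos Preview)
This statement is a \emph{conjecture}; the paper does not prove it in general, and neither does your proposal. What the paper actually establishes are two families of partial results, by methods different from yours.

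For fixed small $g$ the paper never runs the DVV recursion in genus. Instead it extracts from the \emph{string} equation the constraint $|\lambda|-\lambda_1\le 3g-3$ (Lemma~\ref{lem:1}), which already settles $g\le 2$, and from the \emph{dilaton} equation the fact that $C_g(k,\mu)=(k+|\mu|-g-1)!\,Q_g^\mu(k)$ with $Q_g^\mu$ a polynomial of degree $3g-3-|\mu|$ (Corollary~\ref{lem:poly}). Hence for each $\mu\in\mathcal{Q}_g\setminus\mathcal{Q}_g^*$ the vanishing of $C_g(k,\mu)$ for all $k$ follows from finitely many checks, and Lemma~\ref{lem:bound} pins this down to knowing $A_{g,2g-1}$. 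This is how $g\le 7$ is verified. Separately, $n=1,2,3$ are handled by the explicit one-, two-, and three-point formulas of Witten, Dijkgraaf and Zagier (Section~\ref{sec:n123}), rewritten in the $e$-basis. Your remark that ``for $g=2,3,4$ the obstacle can be bypassed by solving the Virasoro constraints'' is close to what the paper does, but the paper's mechanism is sharper: string and dilaton alone (not the full DVV recursion) suffice to reduce each genus to a finite computation.

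Your inductive DVV plan is a genuinely different attack, and you have correctly located its gap. The difficulty is not term (ii) or the additive count in (iii), but exactly the two points you flag: the recursion outputs one non-symmetric coefficient $[x_{n+1}^{m+1}]A_{g,n+1}$ at a time, and the separating term produces products $e_\lambda(x_I)\,e_\mu(x_J)$ in \emph{sub-alphabets}. Re-expanding $e_a(x_I)e_b(x_J)$ in the full $e_\bullet(x_1,\dots,x_n)$ basis introduces monomials of \emph{larger} length, so the naive ``$g_1+g_2=g$ factors'' bookkeeping does not survive the change of basis without further cancellation. Since that cancellation is precisely the content of the conjecture, the argument as sketched is circular at that step. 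In short: your proposal is a reasonable strategy outline with an honestly identified obstruction, but it is not a proof, and the paper does not supply one either---its contribution is the string/dilaton reduction and the verified low-genus and low-$n$ cases.
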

We prove the conjecture up to $g \leq 7$, for any $n$, and up to $n \leq 3$, for any $g$. We then apply the Virasoro constraints to the $A_{g,n}$ obtaining a recursion for
the coefficients $C_g(\lambda)$ in Section \ref{sec:virasoro}.  It is an
important point that the recursion does not rely on the conjecture; on the
opposite,  it allows to verify the conjecture up to $g \leq 7$,  by checking
that all expected vanishings do hold.  In addition, the recursion can be
employed to obtain explicit formulae for the amplitudes $A_{g,n}$ for
higher genera.  As an illustrative example, we give in the following the two
new formulae for $g=2$ and $g=3$. The formula for $g=4$ takes several pages and
is given in Appendix \ref{sec:Cgappendix}.

\begin{remark}
Let us remark that these formulae at the practical level allow for fast computations for large $n$, in genus $2,3$ and $4$, compared to the existing softwares exploiting Virasoro constraints.  One could argue that our methods simply employ Virasoro constraints applied to the basis of the $e_i$.  However, by polynomial considerations, we only need to compute up to $n = 6g - 3$ (see remark \ref{rmk:bound}) in order to fix unambiguously all needed coefficients \textemdash \; after that the intersection numbers can be computed for any given $n$.
\end{remark}
  
 \begin{proposition} The genus two formula reads:
\begin{equation} 
	\begin{aligned} 
	\tilde{A}_{2,n} &= e_1^{n+3} 
	- 2 e_2 e_1^{n+1} - \frac{18}{5}
	e_3 e_1^n - \sum_{k=4}^{n+3} 
	\frac{(k^{3} + 21 k^{2} - 70 k + 96)(k-3)!}{30}
	e_k e_1^{n +3 - k} \\ &+ \frac{9}{5} e_2^2
	e_1^{n-1} + \frac{18}{5} e_3 e_2 e_1^{n-1} + \sum_{k=4}^{n+1}
	\frac{\left(k + 16\right)\left(k - 1\right)!}{10} 
	 e_k e_2 e_1^{n + 1 - k} \\
							 &-\sum_{k=4}^n
						 \frac{k!}{10}  \,e_k e_3
					 e_1^{n-k}.  
\end{aligned}
\label{eq:A2n}
\end{equation}
The convention that \( e_{1}^{m} = 0 \) if \( m < 0 \) is used.
\end{proposition}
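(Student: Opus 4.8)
The plan is to derive \eqref{eq:A2n} as the genus-$2$ output of the recursion for the coefficients $C_g(\lambda)$ established in Section~\ref{sec:virasoro}. That recursion is obtained by feeding the Virasoro constraints (the DVV recursion) for the numbers $\langle \tau_{d_1}\cdots\tau_{d_n}\rangle_g$ into the generating series $A_{g,n}$ and re-expanding in the basis $e_\lambda\,e_1^{\bullet}$; crucially it does not presuppose Conjecture~\ref{conj:main}. In genus $2$ the degeneration term of the DVV recursion produces only genus-$1$ amplitudes (given in closed form by \eqref{eq:A1n}), genus-$0$ amplitudes (given by \eqref{eq:A0n}), and genus-$2$ amplitudes with strictly fewer marked points; combined with the Proposition that the $C_g(\lambda)$ are independent of $n$, the resulting relations among the family $\{C_2(\lambda)\}$ determine every coefficient from the genus-$0$ and genus-$1$ formulae together with a finite set of seeds, the leading one being fixed by $\langle\tau_4\rangle_2=1/1152$, i.e. $\tilde A_{2,1}=e_1^{4}$.

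Concretely I would substitute into the recursion the genus-$2$ ansatz
\[
\tilde A_{2,n}= e_1^{n+3} + \sum_{k\ge 2} a_k\, e_k\, e_1^{n+3-k} + \sum_{k\ge l\ge 2} b_{k,l}\, e_k e_l\, e_1^{n+3-k-l},
\]
with $n$-independent unknowns $a_k,b_{k,l}$ and no length-$3$ term, since Conjecture~\ref{conj:main} — already proven for $g=2$ in the paper — forces $C_2(\lambda)=0$ for $\ell(\lambda)\ge 3$. Matching the coefficient of each monomial $e_\lambda\,e_1^{\bullet}$ on the two sides yields a recursion in $k$ for the one-part coefficients $a_k$, with seeds $a_2=-2$, $a_3=-\tfrac{18}{5}$, and a recursion expressing the two-part coefficients $b_{k,l}$ through the $a$'s and lower $b$'s; one then has to check that the solutions are $a_k=-(k^{3}+21k^{2}-70k+96)(k-3)!/30$ for $k\ge 4$, $b_{2,2}=\tfrac{9}{5}$, $b_{3,2}=\tfrac{18}{5}$, $b_{k,2}=(k+16)(k-1)!/10$ for $k\ge 4$, $b_{k,3}=-k!/10$ for $k\ge 4$, and $b_{k,l}=0$ for all remaining $(k,l)$ — in particular $b_{3,3}=0$ and every $b_{k,l}$ with $l\ge 4$ vanishes.

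The technical heart is verifying these closed forms. The right-hand side of the recursion for $a_k$ is, after the $e$-expansion, a sum over ways of splitting the part $k$ and distributing the accompanying powers of $e_1$ among the factors, weighted by factorials and multinomials coming from the string/dilaton-type terms of the DVV recursion and from re-expressing products of elementary symmetric polynomials when the number of variables drops. I would linearize this by forming the generating function $\sum_{k} a_k z^{k}$ (or $\sum_k a_k z^{k}/k!$), converting the recursion into a linear ODE or a functional equation, solving it, and reading off the cubic-times-factorial closed form; the analogous but bulkier computation handles $b_{k,2}$ and $b_{k,3}$, and the claimed vanishings emerge from the same coefficient matching once one checks that the contributing splittings cancel. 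The step I expect to be most delicate is precisely this combinatorial bookkeeping of how the $e_i$ re-expand under $n\mapsto n-1$; downstream of it, everything is finite-dimensional linear algebra over $\Q$ in each degree.

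As a cross-check and an alternative to the closed-form verification, one can use $n$-independence directly: only finitely many $C_2(\lambda)$ occur in $A_{2,n}$ for fixed $n$, so computing $A_{2,n}$ from tabulated intersection numbers (or from Virasoro) for $n=1,\dots,6g-3=9$ pins down every coefficient appearing in that range — in particular $a_k$ for $k\le 12$ and sufficiently many $b_{k,l}$ — and the closed forms in $k$ are then the unique factorial-times-polynomial interpolations compatible with the recursion, which is exactly what the previous step proves.
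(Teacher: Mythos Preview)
Your final ``cross-check'' paragraph is in fact the paper's actual proof, not merely a sanity test. The paper does not feed the full DVV recursion (with its degeneration terms) into the $e_\lambda$-expansion and then solve ODEs for generating functions of $a_k$ and $b_{k,l}$, as your primary plan suggests. Instead it proceeds in two steps. First, the string equation alone gives the $n$-independence of $C_{g}(\lambda)$ and the bound $|\lambda|-\lambda_1\le 3g-3$ (Lemma~\ref{lem:1}), which for $g=2$ already forces $\ell(\lambda)\le 2$ and restricts $\mu=\lambda\setminus(\lambda_1)$ to $\{\varnothing,(2),(3)\}$. Second, the dilaton equation (which has no genus-lowering term) is rewritten as the recursion \eqref{eq:Cgdilaton}, from which Corollary~\ref{lem:poly} deduces that $C_2(k,\mu)=(k+|\mu|-3)!\,Q_2^\mu(k)$ with $Q_2^\mu$ a polynomial of degree $3-|\mu|$ in $k$ for $k$ past a known threshold. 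These polynomials are then determined by computing $A_{2,n}$ for $n\le 6g-3=9$ (by Virasoro or any other means) and interpolating; the handful of sub-threshold values of $k$ are read off directly from the same data.

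So your plan is correct in spirit but conflates two distinct inputs: you describe the recursion of Section~\ref{sec:virasoro} as coming from the full DVV relation with degeneration, whereas the paper's recursion \eqref{eq:Cgdilaton} comes only from dilaton and is purely genus-preserving; the genus-$0$ and genus-$1$ formulae enter only through the separate computation of the data points $A_{2,1},\ldots,A_{2,9}$. Your proposed generating-function/ODE attack on the $a_k,b_{k,l}$ via the full DVV recursion would presumably also succeed, but it is a genuinely different and heavier route; the paper's argument avoids that combinatorial bookkeeping entirely by reducing everything to polynomial interpolation over $\mathbb{Q}$ with a priori degree bounds.
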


\begin{proposition}
The genus three formula reads:
\begin{equation} \label{eq:A3n}
	\begin{aligned}
\tilde{A}_{3,n}  &= \sum_{k=7}^{n + 4} \tfrac{\left(17 k^{4} + 814 k^{3} + 9391 k^{2} - 12142 k + 53904\right) \left(k - 2\right)! }{8400}{e}_{1}^{- k + n + 4} {e}_{2} {e}_{k} 
\\&- \sum_{k=7}^{n + 3} \tfrac{\left(2 k^{3} + 39 k^{2} - 1523 k - 480\right) \left(k - 1\right)! }{2100}{e}_{1}^{- k + n + 3} {e}_{3} {e}_{k} 
\\&- \sum_{k=5}^{n + 2} \tfrac{\left(5 k^{2} + 199 k + 2282\right) k! }{1400}
{e}_{1}^{- k + n + 2} {e}_{2}^{2} {e}_{k}
- \sum_{k=7}^{n + 2} \tfrac{\left(3 k^{2} + 79 k + 596\right) k! }{700}
{e}_{1}^{- k + n + 2} {e}_{4} {e}_{k} 
\\&+ \sum_{k=7}^{n + 1} \tfrac{\left(k + 37\right) \left(k + 1\right)! }{350} 
{e}_{1}^{- k + n + 1} {e}_{5} {e}_{k}
+ \sum_{k=5}^{n + 1} \tfrac{\left(5 k + 102\right) \left(k + 1\right)! }{700}
{e}_{1}^{- k + n + 1} {e}_{2} {e}_{3} {e}_{k}
\\&+ \sum_{k=4}^{n} \tfrac{\left(k + 2\right)! }{175}
{e}_{1}^{- k + n} {e}_{2} {e}_{4} {e}_{k} 
- \sum_{k=3}^{n}  \tfrac{9 \left(k + 2\right)! }{1400} {e}_{1}^{- k + n} {e}_{3}^{2} {e}_{k} 
- \sum_{k=6}^{n}  \tfrac{\left(k + 2\right)! }{350}{e}_{1}^{- k + n} {e}_{6} {e}_{k} \\
&- \sum_{k=7}^{n + 6} \tfrac{\left(17 k^{6} + 885 k^{5} + 9347 k^{4} - 83577 k^{3} + 338972 k^{2} - 912492 k + 970272\right) \left(k - 4\right)! }{50400} {e}_{1}^{- k + n + 6} {e}_{k} + a_{3,n}
	\end{aligned}
\end{equation}
where $a_{3,n}$ is the collection of initial terms which do not naturally arise in $k$-families:
\begin{equation}
	\begin{aligned}
	&a_{3,n} \nonumber  = \displaystyle - \tfrac{27 }{7}{e}_{1}^{n} {e}_{2}^{3} + \tfrac{1692 }{35}{e}_{1}^{n} {e}_{2} {e}_{4} + \tfrac{153 }{35}{e}_{1}^{n} {e}_{3}^{2} - \tfrac{1872 }{5} {e}_{1}^{n} {e}_{6}+ \tfrac{3024}{5} {e}_{1}^{n - 5} {e}_{5} {e}_{6} 
- \tfrac{8496 }{7}{e}_{1}^{n - 4} {e}_{4} {e}_{6} \\
		&+ \tfrac{432 }{5}{e}_{1}^{n - 4} {e}_{5}^{2} 
		+ \tfrac{108 }{5}{e}_{1}^{n - 3} {e}_{2} {e}_{3} {e}_{4} + \tfrac{15552 }{35}{e}_{1}^{n - 3} {e}_{3} {e}_{6} - \tfrac{5904 }{35}{e}_{1}^{n - 3} {e}_{4} {e}_{5} - 54 {e}_{1}^{n - 2} {e}_{2}^{2} {e}_{4} \\
		&+ \tfrac{27 }{7}{e}_{1}^{n - 2} {e}_{2} {e}_{3}^{2} 
		+ \tfrac{51696 }{35}{e}_{1}^{n - 2} {e}_{2} {e}_{6} + \tfrac{2844 }{35}{e}_{1}^{n - 2} {e}_{3} {e}_{5} - \tfrac{1152 }{35}{e}_{1}^{n - 2} {e}_{4}^{2} - \tfrac{81 }{7}{e}_{1}^{n - 1} {e}_{2}^{2} {e}_{3} \\
		&+ \tfrac{8532 }{35}{e}_{1}^{n - 1} {e}_{2} {e}_{5} + \tfrac{324 }{35} {e}_{1}^{n - 1} {e}_{3} {e}_{4}
+ \tfrac{594 }{35}{e}_{1}^{n + 1} {e}_{2} {e}_{3} - \tfrac{2286 }{35}{e}_{1}^{n + 1} {e}_{5} + \tfrac{27 }{5}{e}_{1}^{n + 2} {e}_{2}^{2} 
	      \\&- \tfrac{594 }{35}{e}_{1}^{n + 2} {e}_{4} - \tfrac{39 }{5}{e}_{1}^{n + 3} {e}_{3} - 3 {e}_{1}^{n + 4} {e}_{2} + {e}_{1}^{n + 6}. 
	\end{aligned}
\end{equation}
\end{proposition}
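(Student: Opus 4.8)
The plan is to derive \eqref{eq:A3n} from the Virasoro constraints for the amplitudes $A_{g,n}$ set up in Section \ref{sec:virasoro}, specialised to $g=3$, together with the $n$-independence of the coefficients $C_g(\lambda)$ proved above. The Virasoro (equivalently DVV) recursion, for fixed genus, determines $A_{g,n}$ inductively in $n$ from $A_{g,n-1}$ with a shifted insertion, from the genus $g-1$ amplitude, and from products of lower amplitudes; read in the elementary-symmetric basis it therefore pins down the genus-$3$ coefficients $C_3(\lambda)$ one after another, starting from the genus $0,1,2$ data already fixed by \eqref{eq:A0n}, \eqref{eq:A1n} and \eqref{eq:A2n}. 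Running this recursion one finds that every $C_3(\lambda)$ with $\ell(\lambda)>3$ vanishes; this is the $g=3$ instance of Conjecture \ref{conj:main}, obtained here as an \emph{output} rather than an input, and it confines the surviving indices to the partitions with parts $\ge 2$ and at most three parts listed in \eqref{eq:A3n}.

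The second step is to make the $k$-family structure manifest. For each sub-partition $\mu$ with parts $\ge 2$ and $\ell(\mu)\le 2$, write $c_\mu(k):=C_3(\mu\sqcup(k))$; I would prove, by induction using the recursion, that for $k$ large enough $c_\mu(k)$ equals $(k+|\mu|-g-1)!$ times a polynomial in $k$ of degree at most $3g-3-|\mu|$. That the ansatz ``polynomial $\times$ shifted factorial'' should be stable under the recursion is suggested by three features: the string and dilaton operations reproduce exactly the factorial tails already visible in \eqref{eq:g0int} and \eqref{eq:A1n}; the descendant terms act on the polynomial prefactor through a fixed linear operator; and in the splitting terms the distinguished part $e_k$ attaches to a single factor, the complementary factor contributing only a $k$-independent constant. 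Establishing this closure, together with the precise degree bound, is the step I expect to be the main obstacle, since it is what turns an a priori infinite list of coefficients into finitely many rational unknowns; the finitely many small-$k$ values not reached by a stabilised family polynomial are precisely the non-$k$-family terms collected in $a_{3,n}$.

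The problem is then reduced to finitely many unknowns --- the rational coefficients of the family polynomials and the exceptional terms of $a_{3,n}$ --- and the third step is a finite computation: by the polynomial considerations of the remark above it suffices to evaluate $A_{3,n}$ for $n\le 6g-3=15$, which lets every family polynomial (of degree $\le 6$, hence determined by at most seven values) stabilise and exhausts the finitely many exceptional partitions. Solving the resulting linear systems outputs the polynomials, the factorial shifts, and the explicit $a_{3,n}$ displayed in the statement.

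Finally I would confirm the formula against independent data: the specialisation $\textbf{x}\to 0$ must reproduce the tabulated numbers $\langle\tau_{d_1}\cdots\tau_{d_n}\rangle_3$, in particular for $n\le 3$ where Conjecture \ref{conj:main} holds for every $g$; the string and dilaton equations must hold identically in $n$ after substituting \eqref{eq:A3n}; a comparison with a Virasoro-based evaluation for a few values of $n$ beyond $15$ provides a further check; and, through the standard relations between $\psi$-intersection numbers and Weil--Petersson volumes, the formula must be compatible with the Bessel-function expressions for $V_{g,n}$.
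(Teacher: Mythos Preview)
Your proposal is essentially the paper's own approach: establish $n$-independence of $C_{g}(\lambda)$ via the string equation, prove that $C_{g}(k,\mu)=(k+|\mu|-g-1)!\,Q_{g}^{\mu}(k)$ with $Q_{g}^{\mu}$ polynomial of degree $3g-3-|\mu|$ (this is exactly Corollary~\ref{lem:poly}), and then fit all polynomials and exceptional terms from the finite data $A_{3,n}$, $n\le 6g-3=15$. The vanishing for $\ell(\lambda)>3$ indeed comes out as an output of this finite computation, as in Section~\ref{sec:virasoro}.

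The one point worth noting is the polynomiality step you flag as the ``main obstacle'': in the paper this is obtained from the \emph{dilaton equation alone} (Lemma~\ref{lem:2} and its corollary), not from the full DVV recursion. Once one writes the dilaton relation in the $(k,\mu)$ notation and absorbs the factorial into $\mathcal{C}_{g}(k,\mu)$ via \eqref{def:mathcalC}, the finite-difference equation \eqref{eq:mathcalC} shows directly that $\mathcal{C}_{g}(k+1,\mu)-\mathcal{C}_{g}(k,\mu)$ is a polynomial in $k$ of degree one less, yielding the stated polynomiality and degree bound by downward induction on $|\mu|$. The descendant and splitting terms of the higher Virasoro constraints are not needed for this closure; they are only used (or any equivalent method) to produce the finitely many input values $A_{3,n}$ for $n\le 15$.
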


\begin{proposition}
The genus four formula is given in Appendix \ref{sec:Cgappendix}.
\end{proposition}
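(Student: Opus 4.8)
The statement asserts that the polynomial displayed in Appendix~\ref{sec:Cgappendix} equals $\tilde A_{4,n}$ for every $n$, so the proof is the execution of the algorithm announced above, followed by a finiteness argument. The plan is as follows. First I would set up the Virasoro constraints for the amplitudes $A_{g,n}$ --- equivalently, the recursion encoded by the KdV hierarchy of Theorem~\ref{thm:WK} together with the string and dilaton equations --- in the form of a relation expressing $A_{g,n+1}$ in terms of $A_{g,n}$, $A_{g-1,n+2}$, and the stable splittings $A_{g_1,n_1+1}A_{g_2,n_2+1}$ with $g_1+g_2=g$. This is the content of Section~\ref{sec:virasoro}, and crucially it does not presuppose Conjecture~\ref{conj:main}. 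Rewriting each $A_{g',n'}$ in the elementary-symmetric basis as in \eqref{eq:AgnInCgn} turns this into a linear recursion for the coefficients $C_{g}(\lambda)$, with base data $A_{0,n}=e_1^{n-3}$ from \eqref{eq:A0n}, $\tilde A_{1,n}$ from \eqref{eq:A1n}, and the genus two and three formulae \eqref{eq:A2n} and \eqref{eq:A3n} obtained in the same sweep.

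Next comes the finiteness reduction. By the $n$-independence Proposition stated above, the $C_{g}(\lambda)$ do not depend on $n$, and the non-$e_1$ part of $\tilde A_{g,n}$ organizes into finitely many $k$-families $e_{\mu}\,e_{k}\,e_1^{3g-3+n-|\mu|-k}$ indexed by partitions $\mu$ with parts $\geq 2$, the coefficient of each family being a polynomial in $k$ times a shifted factorial $(k-c_{\mu})!$, as one sees already in \eqref{eq:A2n} and \eqref{eq:A3n}. One therefore only needs to pin down, for each fixed $\mu$, finitely many numbers: a degree bound on these polynomials in $k$ --- extracted from the recursion, or certified a posteriori --- shows that computing $\tilde A_{4,n}$ for $n$ up to $6g-3=21$ determines every $C_{4}(\lambda)$ unambiguously, after which polynomiality in $n$ propagates the formula to all $n$. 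Concretely, for $1\le n\le 21$ one computes the intersection numbers $\cor{\tau_{d_1}\cdots\tau_{d_n}}_4$ from the recursion, performs the change of basis from the monomial expansion \eqref{eq:Agnmonom} to the $e_\lambda$ expansion \eqref{eq:AgnInDgn}, reads off the $C_4(\lambda)$, fits the $k$-families, and records the result in the appendix; along the way one also checks that all coefficients with $\ell(\lambda)>4$ vanish, which is the $g=4$ case of Conjecture~\ref{conj:main} and certifies that the compact shape \eqref{eq:conj} is attained.

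The main obstacle is computational rather than conceptual: the change of basis from monomials to elementary symmetric polynomials in $n$ variables with $n$ as large as $21$, repeated across all the splitting terms of the Virasoro recursion, is heavy, and the genus four answer is correspondingly large (several pages). Care is also needed to make the degree bound on the $k$-family polynomials rigorous --- the safest route is to prove an a priori bound from the structure of the recursion, or else to compute one value of $n$ beyond the claimed threshold and verify that no new coefficient appears, which upgrades the computation to a proof. Everything else --- the symmetry and homogeneity of $A_{4,n}$ as a polynomial of degree $9+n$, and the well-posedness of the resulting linear system for the $C_4(\lambda)$ --- is automatic from the setup of Section~\ref{sec:virasoro}.
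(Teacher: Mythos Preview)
Your proposal is correct and follows essentially the same approach as the paper: use the string equation to establish $n$-independence of the $C_g(\lambda)$, use the dilaton equation to prove that $C_g(k,\mu)=(k+|\mu|-g-1)!\,Q_g^{\mu}(k)$ with $Q_g^{\mu}$ a polynomial of degree $3g-3-|\mu|$ (this is the paper's Corollary~\ref{lem:poly}, which gives your degree bound a priori rather than a posteriori), and then compute $A_{4,n}$ up to $n=6g-3=21$ to fit all the polynomials and record them in Appendix~\ref{sec:Cgappendix}. The only minor sharpening the paper makes is that the polynomiality and its degree are derived rigorously from the dilaton recursion \eqref{eq:Cgdilaton} alone, so no extra check beyond $n=21$ is needed.
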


\subsection{Applications of the conjecture} 
As an application of conjecture \ref{conj:main},  
we obtain in section \ref{sec:gdimintegral} formulae for the amplitudes $A_{g,n}$ as 
$g+1$ dimensional integrals. In the following we rewrite the result in the residue form:  

\begin{proposition} Conjecture \ref{conj:main} implies that
\begin{equation}
	A_{g,n}(\mathbf{x})
=
[ u^{0} v_{1}^{0} \ldots v_{g}^{0}] 
\,
\frac{B^{-}_{g,n}(u,\mathbf{v})}{u^{d_{g,n}} }
\prod_{i,m} e^{u x_{i}} 
(1 + v_{m} x_{i} ),
\end{equation}
or\footnote{Compared to the expression in equation \ref{eq:conj}, this formulation has the advantage of being packaged in generating series and suggests a reformulation in terms of residues.}, equivalently,
		\begin{equation}
	\cor{
		\tau_{d_{1}} \ldots \tau_{d_{n}} 
}_{g}
=
[ u^{0} v_{1}^{0} \ldots v_{g}^{0}] 
\,
B^{-}_{g,n}(u,\mathbf{v})   
		\prod_{i}
\sum_{r = 0}^{ d_{i}}
			\frac{e_{r}(\mathbf{v})}{u^{r}}
			\frac{1}{(d_{i} - r)!},
		\end{equation}
\end{proposition}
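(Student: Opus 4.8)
The plan is to assume the conjectural expansion \eqref{eq:conj} and to recognize the right-hand side of the claimed identity as a generating-function repackaging of it; once Conjecture~\ref{conj:main} is granted the proposition is a purely formal bookkeeping statement. Two elementary observations drive everything. First, $\prod_i e^{u x_i}=e^{u e_1(\mathbf x)}$, so a single auxiliary variable $u$ (together with a shift by a power of $u$) manufactures an arbitrary power of $e_1$, via $[u^{a}]e^{u e_1}=e_1^{a}/a!$. Second, $\prod_i(1+v_m x_i)=\sum_{j\ge0}v_m^{j}e_j(\mathbf x)$, so each auxiliary variable $v_m$ manufactures exactly one elementary symmetric polynomial, selected by the power of $v_m$ extracted. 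With $g$ variables $v_1,\dots,v_g$ one can thus build every monomial $e_\lambda$ with $\ell(\lambda)\le g$ — precisely the partitions that survive in \eqref{eq:conj}. Throughout, $\prod_{i,m}e^{ux_i}(1+v_mx_i)$ is read as $\prod_i\big(e^{ux_i}\prod_{m}(1+v_mx_i)\big)=e^{ue_1}\prod_{m=1}^{g}\big(\sum_{j\ge0}v_m^{j}e_j\big)$.

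Next I would pin down $B^{-}_{g,n}$ (equivalently, recall it from Section~\ref{sec:gdimintegral}). Writing $N(\lambda)\coloneqq 3g-3+n-|\lambda|$ and $d_{g,n}\coloneqq 3g-3+n$ (the homogeneity degree of $A_{g,n}$), the right object is the Laurent polynomial, polynomial in $u$ and with only non-positive powers of each $v_m$,
\[
B^{-}_{g,n}(u,\mathbf v)=\sum_{\substack{\ell(\lambda)\le g\\ \lambda_i\ge2}}\frac{C_g(\lambda)\,N(\lambda)!}{g!}\;u^{|\lambda|}\sum_{\sigma\in S_g}v_{\sigma(1)}^{-\hat\lambda_1}\cdots v_{\sigma(g)}^{-\hat\lambda_g},\qquad \hat\lambda\coloneqq\lambda\sqcup(0)^{g-\ell(\lambda)}.
\]
(If a non-symmetric $B^{-}_{g,n}$ is acceptable one may replace $\tfrac1{g!}\sum_{\sigma\in S_g}$ by the single monomial $v_1^{-\lambda_1}\cdots v_{\ell(\lambda)}^{-\lambda_{\ell(\lambda)}}$; nothing below changes.) The verification of the first identity is then immediate by linearity in $\lambda$: for a fixed $\lambda$ and a fixed $\sigma$, the $u$-extraction gives $[u^{\,d_{g,n}-|\lambda|}]e^{ue_1}=e_1^{N(\lambda)}/N(\lambda)!$, cancelling the $N(\lambda)!$; the $v_m$-extractions give $[v_m^{\hat\lambda_{\sigma^{-1}(m)}}]\sum_j v_m^{j}e_j=e_{\hat\lambda_{\sigma^{-1}(m)}}$, whose product over $m$ is $\prod_k e_{\hat\lambda_k}=e_\lambda$ since $e_0=1$; summing over the $g!$ permutations cancels the $1/g!$, and summing over $\lambda$ reproduces $\sum_\lambda C_g(\lambda)\,e_\lambda\,e_1^{N(\lambda)}=A_{g,n}(\mathbf x)$ by \eqref{eq:conj}.

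For the equivalent form I would apply the operator $[x_1^{d_1}\cdots x_n^{d_n}]$ to both sides of the identity just established. On the left, \eqref{eq:Agnmonom} gives $\cor{\tau_{d_1}\cdots\tau_{d_n}}_g$. On the right, $[x^{\mathbf d}]$ commutes with $[u^0v_1^0\cdots v_g^0]$, and the $i$-th factor of $\prod_i\big(e^{ux_i}\prod_m(1+v_mx_i)\big)$ contributes $[x_i^{d_i}]\big(\sum_p\tfrac{(ux_i)^p}{p!}\sum_r e_r(\mathbf v)x_i^{r}\big)=\sum_{r=0}^{d_i}\tfrac{u^{d_i-r}}{(d_i-r)!}e_r(\mathbf v)=u^{d_i}\sum_{r=0}^{d_i}\tfrac{e_r(\mathbf v)}{u^{r}(d_i-r)!}$. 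Multiplying over $i$ produces a global factor $u^{\sum_i d_i}$; both sides vanish unless $\sum_i d_i=3g-3+n=d_{g,n}$ (homogeneity of $A_{g,n}$, or the dimension constraint on $\overline{\mathcal M}_{g,n}$), in which case $u^{\sum_i d_i}$ cancels $u^{-d_{g,n}}$ and we are left with $[u^0v_1^0\cdots v_g^0]\,B^{-}_{g,n}(u,\mathbf v)\prod_i\sum_{r=0}^{d_i}\tfrac{e_r(\mathbf v)}{u^{r}}\tfrac{1}{(d_i-r)!}$, as claimed.

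Granting Conjecture~\ref{conj:main}, no step is a genuine obstacle — the proposition is an identity between two encodings of the same finite family of numbers $C_g(\lambda)$. The only points requiring care are fixing the normalization/symmetrization convention for $B^{-}_{g,n}$ and checking that homogeneity of $A_{g,n}$ forces $d_{g,n}=3g-3+n$, so that the auxiliary powers of $u$ match up. The substantial work lies upstream, in Section~\ref{sec:gdimintegral}: deriving the $(g{+}1)$-dimensional integral representation of $A_{g,n}$ from the conjecture, after which passing to the present residue form is merely the dictionary $\tfrac1{k!}=\res_{u=0}\big(u^{-k-1}e^{u}\big)$ applied to each of the $g+1$ integrations.
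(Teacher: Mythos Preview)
Your proof is correct and uses the same two residue identities the paper relies on in Section~\ref{sec:gdimintegral}: $e_1^{k}=k!\,[u^{k}]\,e^{u e_1}$ and $e_\lambda=[v_1^{0}\cdots v_g^{0}]\,\tfrac{1}{g!}\tilde m_\lambda(1/\mathbf v)\prod_{i,m}(1+v_m x_i)$, plugged into the conjectural expansion. Your definition of $B^{-}_{g,n}$ matches the paper's \eqref{def:Bgnmin} (you should explicitly include the constraint $|\lambda|\le d_{g,n}$ so that $N(\lambda)!$ makes sense), and your extraction of the correlator by applying $[x^{\mathbf d}]$ is exactly the computation the paper carries out before its Corollary.

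The only difference is organizational. The paper first proves the general time-dependent formula (Proposition~\ref{lem:gdim}) and obtains the present statement as the specialization $t_k=0$; you go directly to the $t=0$ case. For this particular proposition your route is shorter and cleaner, since at $t=0$ the sum over $\ell$ collapses, $F(t;u,\mathbf v)=u$, and all the analytic care about contour deformation and $\mathfrak{\hat u}(F)$ becomes vacuous. Your final paragraph is slightly misleading: there is no ``substantial work upstream'' needed for this proposition beyond assuming the conjecture --- the $(g{+}1)$-dimensional integral representation \emph{is} the residue formula you just proved, and the more involved machinery of Section~\ref{sec:gdimintegral} is for the time-dependent extension, not a prerequisite here.
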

\noindent
where \( d_{g,n} = 3g-3+n \), the operator $[x^k]$ extracts the coefficient of $x^k$ in the expression to which it is applied, and \( B^{-}_{g,n}(u, \mathbf{v}) \) is a polynomial
in \( u \) and in the \( 1/v_{i} \) given by the
coefficients \( C_{g}(\lambda) \) in the elementary symmetric 
basis decomposition of \( A_{g,n} \):
\begin{equation}
	B^{-}_{g,n}(\xi,v_1,\dots,v_g)=
\sum_{\substack{ |\lambda| \leq d_{g,n} \\ \lambda_{i} \geq 2\\ l(\lambda) \leq g }} 
(d_{g,n}-|\lambda|)! \ \frac{C_g(\lambda)}{g!} 
\tilde{m}_\lambda(\mathbf{1/v}) \xi^{|\lambda|}.
\end{equation}
Here \( \tilde{m}_{\lambda} \) are the augmented monomial symmetric
polynomials. 
We employ the same argument to extend the integral representation to expressions of correlators of the
form
\begin{equation}
	\cor{e^{ \sum_{k \geq 2} t_{k-1} \tau_{k}} 
	\prod_{i = 1}^{n} \tau_{d_{i}}}_{g}.
\end{equation}
In particular, we can apply our results to Weil-Petersson polynomials 
$$
V_{g,n}(\mathbf{L}) := \int_{\overline{\mathcal{M}}_{g,n}} \exp\left(2\pi^2 \kappa_1 + \sum_{i=1}^n \frac{L_i^2 \psi_i}{2}\right),
$$ 
finding the following expression.
\begin{theorem} Conjecture \ref{conj:main} implies that
\begin{equation}
		\begin{aligned}
			V_{g,n}(\mathbf{L}) 
			&=
	\sum_{\alpha_{1} ,\ldots, \alpha_{n}} 
	\cor{ e^{2 \pi^2 \kappa_{1}} \prod_{i = 1}^{n} \tau_{d_{i}}}_{g} 
	\prod_{i} \frac{L^{2 d_{i}}}{2^{d_{i}} d_{i}!} 
	\\
			&= 
	\, \underset{\mathbf{v} = 0}{\operatorname{Res}} \,\prod_{j=1}^{g}  
\frac{dv_j}{v_j} 
\, \underset{u = 0}{\operatorname{Res}} \,
du
\frac{B^{-}_{g,n}(F(u,\mathbf{v}),\mathbf{v})   
}{F(u,\mathbf{v})^{3g-2+n} }
\prod_{i = 1}^{n} 
(
\sum_{k} e_{k}(\mathbf{v}) 
\left( \frac{L_{i}}{\sqrt{2u}} \right)^{k} I_{k}(L_{i} \sqrt{2u})
)
\\
			&=
			[u^{-1} v_{1}^{0} \ldots v_{g}^{0}]
\frac{B^{-}_{g,n}(F(u,\mathbf{v}),\mathbf{v})   
}{F(u,\mathbf{v})^{3g-2+n} }
\prod_{i = 1}^{n} 
(
\sum_{k} e_{k}(\mathbf{v}) 
\left( \frac{L_{i}}{\sqrt{2u}} \right)^{k} I_{k}(L_{i} \sqrt{2u})
),
\end{aligned}
\end{equation}
where $I_k$, $J_k$ are standard notations for Bessel functions, and
\begin{equation}
	F(u,\mathbf{v}) = \sum_{k \geq 0}\Bigg[
	\left(-\frac{\pi \sqrt{2}}{\sqrt{u}}\right)^{k-1}\!\!\!\!\!
	J_{k-1}(2 \pi \sqrt{2u}) - \delta_{k,1} \Bigg] e_{k}(\mathbf{v}).
\end{equation}
\end{theorem}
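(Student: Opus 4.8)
The plan is to obtain the Weil-Petersson formula as a specialization of the integral representation for the more general correlators
\[
\cor{e^{\sum_{k\geq 2} t_{k-1}\tau_k}\prod_{i=1}^n \tau_{d_i}}_g,
\]
whose derivation is the content of the preceding proposition (which we may assume). First I would recall the classical fact, due to Kaufmann--Manin--Zagier and Mulase--Safnuk (and used extensively by Mirzakhani and Manin--Zograf), that inserting $\kappa_1$-classes amounts to a change of variables in the $\psi$-generating series: by the pushforward formula for $\kappa$-classes along forgetful maps one has
\[
\cor{e^{s\kappa_1}\prod_i \tau_{d_i}}_g
=
\cor{e^{\sum_{k\geq 2}\frac{(-1)^{k-1}s^{k-1}}{(k-1)!}\tau_k}\prod_i \tau_{d_i}}_g,
\]
so that $V_{g,n}(\mathbf L)$ is the evaluation of the $t$-deformed correlator at $s = 2\pi^2$, that is $t_{k-1} = (-1)^{k-1}(2\pi^2)^{k-1}/(k-1)!$, bundled with the generating variables $L_i^{2d_i}/(2^{d_i}d_i!)$ for the $\psi$-insertions.

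Second, I would feed these specific values of the $t_k$ into the residue representation established for the $t$-deformed correlators. In that representation the only place the $t_k$ enter is through the argument $F(u,\mathbf v)$ replacing the plain monomial variable: the deformation shifts $\xi \mapsto F(u,\mathbf v)$ inside $B^-_{g,n}$ and correspondingly shifts the power of $u$ in the denominator from $u^{d_{g,n}}$ to $F(u,\mathbf v)^{3g-2+n}$ (the extra $+1$ in the exponent coming from the residue normalization $du/u \leftrightarrow [u^{-1}]$). Plugging $t_{k-1} = (-1)^{k-1}(2\pi^2)^{k-1}/(k-1)!$ into the defining series of $F$ and recognizing the resulting power series in $u$ as a Bessel $J$ series — using $J_{\nu}(z) = \sum_{m\geq 0}\frac{(-1)^m}{m!\,\Gamma(m+\nu+1)}(z/2)^{2m+\nu}$ with $z = 2\pi\sqrt{2u}$ — yields exactly the stated closed form
\[
F(u,\mathbf v) = \sum_{k\geq 0}\Bigl[\bigl(-\tfrac{\pi\sqrt 2}{\sqrt u}\bigr)^{k-1} J_{k-1}(2\pi\sqrt{2u}) - \delta_{k,1}\Bigr]e_k(\mathbf v),
\]
the $-\delta_{k,1}$ accounting for the absence of a $t_0$-deformation (no $\tau_1$ term).

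Third, I would handle the $\psi$-insertion factors. In the $t$-deformed proposition each marked point contributes $\sum_r e_r(\mathbf v) u^{-r}/(d_i-r)!$, and summing against $L_i^{2d_i}/(2^{d_i}d_i!)$ over $d_i$ produces, for each $i$,
\[
\sum_{d_i}\sum_{r=0}^{d_i}\frac{e_r(\mathbf v)}{u^r}\frac{1}{(d_i-r)!}\frac{L_i^{2d_i}}{2^{d_i}d_i!}.
\]
After the shift $d_i \to d_i + r$ and a short manipulation the inner sum over the free index becomes $\sum_{m\geq 0}\frac{1}{m!(m+r)!}(L_i^2/(2u)\cdot u)^{\text{power}}$, which is precisely the series for the modified Bessel function $I_r$: using $I_{\nu}(z) = \sum_{m\geq 0}\frac{1}{m!\,\Gamma(m+\nu+1)}(z/2)^{2m+\nu}$ one gets $\bigl(\tfrac{L_i}{\sqrt{2u}}\bigr)^r I_r(L_i\sqrt{2u})$, matching the claimed product $\prod_i\bigl(\sum_k e_k(\mathbf v)(L_i/\sqrt{2u})^k I_k(L_i\sqrt{2u})\bigr)$. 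Finally, rewriting the triple $[u^{-1}v_1^0\cdots v_g^0]$ coefficient extraction as the iterated residue $\operatorname{Res}_{\mathbf v=0}\prod_j \tfrac{dv_j}{v_j}\operatorname{Res}_{u=0}du$ gives the two displayed equivalent forms.

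The main obstacle I anticipate is purely bookkeeping-flavoured but genuinely delicate: tracking the exact powers of $u$, $\sqrt 2$, $\pi$ and the factorials so that the two Bessel recognitions land on the precise normalizations in the statement — in particular getting the exponent $3g-2+n$ (not $3g-3+n$) right, which forces care about whether one is extracting $[u^0]$ or $[u^{-1}]$, and making sure the $-\delta_{k,1}$ correction in $F$ is consistent with the convention $e_1^m = 0$ for $m<0$ and with the fact that the $\kappa_1$-to-$\tau$ dictionary starts at $k=2$. A secondary point requiring a word of justification is convergence/formality: since $V_{g,n}$ is a genuine polynomial in $\mathbf L$ of known degree and the residues are finite (the integrands are Laurent polynomials in $u$ and $\mathbf v$ after the Bessel expansions are truncated by the polynomiality in $d_{g,n}$), all the manipulations are legitimate term by term, so no analytic subtlety actually intervenes.
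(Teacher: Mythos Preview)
Your approach is essentially the paper's: specialise the general $t$-deformed integral representation (Proposition~\ref{lem:gdim}) to the times $t_{k} = -(-2\pi^2)^{k}/k!$ coming from the $\kappa_{1}$-to-$\tau$ dictionary, recognise the resulting $F(u,\mathbf v)$ as a Bessel-$J$ series, and convert the $x_i$-factors to $L_i$-factors to obtain the Bessel-$I$ product (the paper phrases this last step as applying the inverse of the transform $\mathcal{L}_{x}$ in~\eqref{eq:xtoL}, which is exactly your direct summation over $d_i$).

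One point deserves more care than you give it. In the general proposition the $u$-integration is the operator $\sqint$ over all of $\hat{\mathfrak u}(F)$ and the kernel is $B_{g,n}$, not $B^{-}_{g,n}$; to land on the statement with a single $\operatorname{Res}_{u=0}$ and $B^{-}_{g,n}$ you must check $\hat{\mathfrak u}(F)=\{0\}$. The paper does this by verifying $|1-F_{0}(u)/u|<1$ in a neighbourhood of the origin via $F_{0}(u)/u = \tfrac{1}{\pi\sqrt{2u}}J_{1}(2\pi\sqrt{2u})\to 1$. Your justification that ``the integrands are Laurent polynomials in $u$'' is not literally true (the factor $1/F(u,\mathbf v)^{3g-2+n}$ is a genuine power series in $u$), so you should either reproduce the paper's contour check or argue more carefully at the level of formal power series.
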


\subsection*{Structure of the paper} In Section \ref{sec:BuryakOkounkov}  we summarise the state-of-the-art for generating series of intersection numbers. In Section \ref{sec:virasoro} we exploit the information carried by Virasoro constraints to refine our conjecture, derive a recursion of the coefficients of the generating series, and give some restatements of the main conjecture.  Section \ref{sec:n1} is dedicated to the proof of the main conjecture for $n=1,2,3$.  In Section \ref{sec:omega} we analyse the ELSV formula for one-part Hurwitz numbers and give a different restatement of our conjecture in terms of the $\Omega$-CohFT.  Section \ref{sec:gdimintegral} contains applications of the main conjecture as new formulae for $A_{g,n}$ and Weil-Petersson polynomials as $g$-dimensional integral representations.  Section \ref{sec:examples} contains a few examples of non-trivial cohomological field theories $\Omega_{g,n}$ which show similar behaviour to $\Omega_{g,n} = 1$ when their amplitudes are expanded in elementary symmetric polynomials.  Finally, we conclude with two appendices with numerics: the first tests the simplifications given both by our recursion and by the main conjecture, the second provides the formula of $A_{g,n}$ for $g=4$.

\subsection*{Acknowledgements}

B.~E. and D.~L. are supported by the \textit{Institut de Physique Théorique Paris} (IPhT) and the \textit{Institut de Hautes Études Scientifiques}. This work is partly a result of the ERC-SyG project, Recursive and Exact New Quantum Theory (ReNewQuantum) which received funding from the European Research Council (ERC) under the European Union's Horizon 2020 research and innovation programme under grant agreement No 810573. D.~L. is moreover supported by the INdAM group GNSAGA for scientific visits. D.~L. would like to thank Johannes Schmitt for the many conversations over the Sage package \textup{admcycles} \cite{admcycles} which was used to compute intersections over the moduli space of curves in Section  \ref{sec:examples}. Most importantly, this work would not have been possible without the fundamental contribution of Adrien Ooms, who used to be coauthor of this paper, before he asked the removal of his name from all his scientific projects at once.


\section{Known formulas for the $n$-point functions} \label{sec:BuryakOkounkov} 

The $n$-point function 
$$
F_n(\textbf{x}) := \sum_{g=0}A_{g,n}(\textbf{x}) = \int_{\overline{\mathcal{M}}_{g,n}} \frac{1}{\prod_{i=1}^n (1 - x_i \psi_i)}
$$
 is an alternative way to encode all
information of intersection numbers of $\psi$ classes.  In the expression above the genus is determined by the cohomological degree $D$ taken as $g = \frac{D + 3 - n}{3}$,  and the expression vanishes whenever the fraction is not an integer.  In the following we summarise the state-of-the-art about the $F_n$.

Okounkov \cite{Ok} obtains an analytic expression of the $n$-point functions
in terms of $n$-dimensional error-function-type integrals, based on
his work of random permutations. Buryak (see \cite{AIS_OkBur}) obtains another integral representation of the $F_n$ formula from the semi-infinite wedge formalism. Br\'ezin and Hikami \cite{BH} apply
correlation functions of GUE ensemble to find explicit formulae of $n$-point functions. Liu and Xu \cite{LiuXu_npoint}
 exploit the information carried by the Virasoro constraints to derive a recursive formula for the $F_n$.

\subsection{Buryak formula} Define the function $P_n(a_1,\dots,a_n;x_1,\dots,x_n)$ by $P_1(a_1;x_1) \coloneqq \frac{1}{x_1}$ for $n=1$, and for $n\geq 2$ by
\begin{align}\label{eq:DefinitionP}
P_n(\textbf{a}; \textbf{x}) \coloneqq \sum_{\substack{\tau \in \mathfrak{S}_n\\ \tau(1) = 1 }}
\frac{ 
\prod\limits_{j=2}^{n-1} x_{\tau(j)} 
\prod\limits_{j=1}^{n-1}
\zeta \left( 
	\left(\sum\limits_{k=1}^{j} a_{\tau(k)}\right)  x_{\tau(j+1)} -  a_{\tau(j+1)} \left( \sum\limits_{k=1}^{j} x_{\tau(k)} \right)
 \right) 
} {
	\prod\limits_{j=1}^{n-1} \left(
	a_{\tau(j)}x_{\tau(j+1)} - a_{\tau(j+1)} x_{\tau(j)} 
	\right)
},
\end{align}
where $\varsigma(z) = 2\sinh(z/2)$. In fact $P_n$ turns out to be a formal power series in all its variables, invariant with respect to the simultaneous action of the symmetric group $\mathfrak{S}_n$ on $(a_1,\dots,a_n)$ and $(x_1,\dots,x_n)$, see~\cite[Remarks 1.5 and 1.6]{BSSZ}.
Buryak finds that the $n$-point functions $F_n$ have the following Gaussian-integral representation:
\begin{align*}
F_n(\textbf{x}) =
\frac{e^{ \frac{p_3(\textbf{x})}{24} }}{e_1(\textbf{x}) \prod_{j=1}^{n} \sqrt{2 \pi x_j} } \int_{\mathbb{R}^n} \left[\prod_{j=1}^{n} e^{ - \frac{a_j^2}{2 x_j} } da_j \right] P_n(i\textbf{a}; \textbf{x}),
\end{align*}
where $i \textbf{a} = (\sqrt{-1}a_1, \dots, \sqrt{-1}a_n).$ and $p_i$ are power sums. Let us recall that the usual convention for the unstable cases $(g,n) =  (0,1)$ and $(0,2)$ is the following:
$$
\II 01 \frac{1}{1-x\psi_1} = \frac{1}{x^2}, \qquad \qquad \II 02 \frac{1}{(1 - x\psi_1)(1 - y\psi_2)} = \frac{1}{(x+y)}.
$$

\subsection{Okounkov formula} Define the function $\mathcal{E}(x_1,\dots,x_n)$ as
\begin{align*}
 \mathcal{E}(\textbf{x}) \coloneqq \frac{e^{ \frac{p_3(\textbf{x})}{12}}}{\prod_{j=1}^{n} \sqrt{4\pi x_j} } \int_{\mathbb{R}_{\geq 0}^n} \left[\prod_{j=1}^{n} ds_j \right]
 \exp \left({-\sum_{j=1}^n \left(\frac{(s_j-s_{j+1})^2}{4x_j} + \frac{(s_j+s_{j+1})x_j}{2} \right)}\right),
\end{align*}
where $s_{n+1}$ denotes $s_1$. Define the $\mathfrak{S}_n$-invariant function
\begin{align*}
\mathcal{E}^{\circlearrowleft}(x_1,\dots,x_n) \coloneqq \sum_{\sigma\in \mathfrak{S}_n/\mathbb{Z}_n} 
\mathcal{E}(x_{\sigma(1)},\dots,x_{\sigma(n)}).
\end{align*}
\noindent
Let $\Pi_n$ be the set of all partitions of $\{1,\dots,n\}$ into a disjoint union of unordered subsets $\sqcup_{j=1}^\ell I_j$, for all $\ell=1,2\dots,n$. Let $x_I\coloneqq \sum_{j\in I} x_j$, $I\subset \{1,\dots,n\}$. Finally, define
\begin{align*}
\mathcal{G}(\textbf{x})\coloneqq \sum_{\sqcup_{j=1}^\ell I_j \in \Pi_n} (-1)^{\ell+1} \mathcal{E}^{\circlearrowleft}(x_{I_1},\dots,x_{I_\ell}), \qquad 
F_n(\textbf{x}) \coloneqq  \frac{(2\pi)^{n/2}}{\prod_{j=1}^n\sqrt{x_j}}
\mathcal{G}\left(\frac{\textbf{x}}{2^{1/3}}\right).
\end{align*}

\subsection{Relation between Buryak and Okounkov formulae}
There is no obvious equality between the two formulae for $F_n$ obtained by Buryak and by Okounkov. A technical combinatorial argument proving their equality directly was achieved in \cite{AIS_OkBur}.

\subsection{Brezin-Hikami formula}
It is worth mentioning that Brezin and Hikami \cite{BH} derive formulae for the $n$-point functions $F_n^{(p)}$ for the intersection numbers of the moduli space of curves with a $(p-1)$-spin structure via Gaussian random matrix theory in the presence of an external matrix source, which restricts to $F_n$ for $p=2$, although their work focuses on the explicit investigation of higher $p$ for $n=1,2$. 

\subsection{Liu-Xu formula}
Dijkgraaf-Verlinde-Verlinde \cite{DVV} have recast \cref{thm:WK} in terms of Virasoro constraints applied to the partition function and shown that the two statements are in fact equivalent (which is in general not the case).
Liu and Xu \cite[Theorem 1.2, Corollary 2.2]{LiuXu_npoint}, exploiting the structure of the Virasoro constraints, have constructed recursively a solution of the $n$-point function (and its normalized version), which we now recall.

\begin{theorem}\label{thm:LiuXuFG}
For $n\geq2$, the $n$-point function takes the form
\begin{equation*}
F_n(x_1,\dots,x_n)=\sum_{g=0}^{\infty} \sum_{r = 0}^g \frac{(2r+n-3)!!}{12^{g-r}(2g+n-1)!!}S_r(x_1,\dots,x_n)e_1^{3g - 3r},
\end{equation*}
where $S_r$ is the homogeneous symmetric polynomial of degree $3r - 3 + n$ defined recursively by
\begin{align*}
S_r(x_1,\dots,x_n)
=
\frac{1}{2e_1}\sum_{\underline{n}=I\coprod
J}e_1(I)^2e_1(J)^2\sum_{r'=0}^r
A_{r', |I|}(x_I)A_{r-r', |J|}(x_J),
\end{align*}
where $I,J\ne\emptyset$ and $e_1(I)$ is $e_1$ evaluated in the variables $x_i$ for $i \in I$.
Equivalently, the normalized $n$-point function takes the form
\begin{align*}
G_n(x_1,\dots,x_n)&=
\sum_{g=0} \sum_{r = 0}^g \frac{(2r+n-3)!!}{12^{g - r}(2g+n-1)!!}P_r(x_1,\dots,x_n) (e_1^3 - p_3)^{g - r}
\\
&=\sum_{g=0} \sum_{r = 0}^g \frac{(2r+n-3)!!}{4^{g - r}(2g+n-1)!!}P_r(x_1,\dots,x_n) (e_1e_2 - e_3)^{g - r}
\end{align*}
where $P_r$ is the homogeneous symmetric polynomials of degree $3r - 3 + n$ defined recursively by
\begin{align*}
P_r(x_1,\dots,x_n)
=
\frac{1}{2e_1}\sum_{\underline{n}=I\coprod
J}e_1(I)^2e_1(J)^2\sum_{r'=0}^r
B_{r', |I|}(x_I)B_{r-r', |J|}(x_J),
\end{align*}
where $I,J\ne\emptyset$, $\underline{n}=\{1,2,\ldots,n\}$ and $B_g(x_I)$ denotes the degree $3g+|I|-3$ homogeneous
component of the normalized $|I|$-point function
$G_n(x_{k_1},\dots,x_{k_{|I|}})$, where $k_j\in I$.
\end{theorem}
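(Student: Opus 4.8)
Following \cite{LiuXu_npoint}, the plan is to read off the stated shape of $F_n$ directly from the Virasoro constraints, which by \cite{DVV} are equivalent to \cref{thm:WK}. In the language of correlators these constraints amount to the DVV recursion, which expresses $\cor{\tau_{k+1}\tau_{d_2}\cdots\tau_{d_n}}_g$, for $k\geq -1$, as a combination of three families of terms: \emph{leg-merging} terms $\cor{\tau_{k+d_j}\prod_{i\neq 1,j}\tau_{d_i}}_g$ with weights built from odd double factorials; a \emph{genus-lowering} term $\cor{\tau_a\tau_b\prod_{i\neq 1}\tau_{d_i}}_{g-1}$ summed over $a+b=k-1$; and \emph{stable-splitting} terms $\cor{\tau_a\prod_{I}\tau_{d_i}}_{g_1}\cor{\tau_b\prod_{J}\tau_{d_i}}_{g_2}$, summed over $\{2,\dots,n\}=I\sqcup J$ and $g_1+g_2=g$. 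The first step is to multiply by $x_1^{k+1}\prod_{j\geq 2}x_j^{d_j}$ and sum over $k$ and the $d_j$, which turns the recursion into a single closed functional equation for $F_n(\textbf{x})=\sum_g A_{g,n}(\textbf{x})$: the double-factorial weights become elementary rational operations in the $x_i$, and the convolution $a+b=k-1$ becomes a second-order operation, which is ultimately the source of the factors $e_1(I)^2 e_1(J)^2$. The cases $k=-1,0$ reproduce the string and dilaton equations recalled in the introduction.

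The second step is to make the ansatz $F_n=\sum_{g\geq 0}\sum_{r=0}^g c_{g,r,n}\,S_r\,e_1^{3g-3r}$, with $S_r$ symmetric and homogeneous of degree $3r-3+n$ and $c_{g,r,n}$ scalar, and to sort the functional equation by powers of $e_1$. The leg-merging and genus-lowering terms, together with the string and dilaton equations, act on this ansatz essentially through the scalar prefactors and the bookkeeping of the number of marked points; matching them fixes $c_{g,r,n}=\frac{(2r+n-3)!!}{12^{g-r}(2g+n-1)!!}$, with the power of $12$ issuing from the genus-lowering term and the double-factorial ratio from the dilaton-type weights.

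With the scalars determined, what remains of the functional equation is precisely the stable-splitting contribution, and reading off the coefficient of $e_1^{3g-3r}$ yields the quadratic recursion for $S_r$ exactly as stated. Here the genus-lowering term is reabsorbed into the sum by means of the usual unstable conventions for $(g,n)=(0,1)$ and $(0,2)$ recalled above, so that for instance a singleton block $I$ contributes $e_1(I)^2 A_{0,1}=1$ and the sum may be taken over all $\underline n=I\sqcup J$ with $I,J\neq\emptyset$. One then verifies self-consistency: the recursion produces a symmetric polynomial homogeneous of degree $3r-3+n$ that depends only on $r$ and $n$, not on $g$, with base case governed by $A_{0,m}=e_1^{m-3}$, cf.\ \eqref{eq:A0n}.

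Finally, the normalized statement for $G_n$ and $P_r$ follows by applying $\tilde A_{g,n}=24^g g!\,A_{g,n}$ termwise: the factor $24^g g!$ both converts $12^{g-r}$ into $4^{g-r}$ once one uses $p_3=e_1^3-3e_1e_2+3e_3$, i.e.\ $e_1^3-p_3=3(e_1e_2-e_3)$, and promotes the ``carrier'' $e_1^3$ of the genus expansion to $e_1^3-p_3$, a substitution already apparent in \eqref{eq:A1n}; the recursion for $P_r$ is then the verbatim normalized transcription of the one for $S_r$, with the $B_g$ in place of the $A_g$. The main obstacle is the combination of the second step with the reorganization opening the third: bringing the three heterogeneous DVV families — which distribute the remaining marked points differently and carry different weights — onto a common footing so that they collapse into the single scalar $c_{g,r,n}$ times the single quadratic sum, and checking that the resulting recursion indeed closes on polynomials of the claimed degree. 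Everything else is routine manipulation of double factorials.
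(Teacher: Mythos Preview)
The paper does not prove this theorem at all: \cref{thm:LiuXuFG} is stated in Section~\ref{sec:BuryakOkounkov} as a known result, cited verbatim from \cite[Theorem~1.2, Corollary~2.2]{LiuXu_npoint}, and only used later (e.g.\ in \cref{cor:mainconj} and the $n=3$ computation). So there is no ``paper's own proof'' to compare against; your outline is really a sketch of Liu--Xu's original argument, and at that level it is broadly faithful --- the DVV recursion is indeed the engine, and the quadratic stable-splitting term is what produces the $S_r$/$P_r$ recursions.

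One point in your final paragraph is off, though. The passage from $F_n$ to $G_n$ is not obtained by applying the amplitude normalization $\tilde A_{g,n}=24^g g!\,A_{g,n}$ termwise; the relevant normalization is $G_n=e^{-p_3/24}F_n$ (as the paper recalls in the remark after the $n=2$ computation). It is this exponential prefactor that converts the carrier $e_1^{3(g-r)}$ into $(e_1^3-p_3)^{g-r}$ and simultaneously replaces the $A$'s by the $B$'s in the quadratic recursion; the step from $12^{g-r}$ to $4^{g-r}$ is then just the identity $e_1^3-p_3=3(e_1e_2-e_3)$, not anything to do with $24^g g!$. With that correction your strategy is a reasonable summary of the cited proof.
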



\section{Structure imposed by Virasoro constraints}\label{sec:virasoro}
In this section we discuss the implications of the first two 
Virasoro constraints,  the so called string and dilaton equations,
on the coefficients \( C_{g,n}(\lambda) \) 
appearing in the elementary symmetric 
polynomial decomposition of \( A_{g,n} \)
\eqref{eq:AgnInCgn}.
We show how the string equation allows one to
drop the index \( n \) and define uniquely coefficients \( C_{g}(\lambda) \)
that appear in the formula of \( A_{g,n} \) for any \( n \). 

A careful analysis also puts a constraint on the size of 
the partition,  after the first row of the partition is excluded.
Then we use the dilaton equation to show the polynomiality behaviour of
\( C_{g}(\lambda) \) in terms of the length of the first row \( \lambda_{1} \).
This is enough to compute all coefficients $C_g$ for fixed \( g \) by computing the case in which $n$ is sufficiently large.  In particular, this allows us to check the expected vanishing up to genus $7$,  and therefore prove the conjecture in those cases.  Also, the recursion of the $C_g$ obtained by the dilaton equation provides new formulae for $A_{g,n}$ in genus $2,3,4$.

For higher Virasoro, we analyse the formula of Liu and Xu in light of our conjecture, providing a restatement of Conjecture \ref{conj:main} used in the section to prove small $n$ cases.

\subsection{Elementary symmetric functions}

For a set of variables $\textbf{x} = \{x_1, \dots, x_n\}$, let $e_i$ indicate
the elementary symmetric polynomials. 
Observe that:
\begin{align}
	\label{eq:elementary1}
	e_{k}(x_{1} ,\ldots,x_{n+1})\big|_{x_{n+1} = 0}
	&=
	\begin{cases}
		0 & \text{if} \quad k = n+1\\
		e_{k}(x_{1} ,\ldots, x_{n}) &  \text{otherwise.}
	\end{cases}
\end{align}
Moreover
\begin{equation}
\frac{\partial}{\partial x_{n+1}}e_{k}(x_{1} ,\ldots,x_{n+1})
=
\frac{\partial}{\partial x_{n+1}}e_{k}(x_{1} ,\ldots,x_{n+1})\big|_{x_{n+1} = 0}
=
e_{k-1}(x_{1} ,\ldots, x_{n}).
\label{eq:elementary2}
\end{equation}
\subsection{The Virasoro contraints} 
A careful analysis of the KdV hierarchy allows to restate \cref{thm:WK} in terms of the Virasoro algebra, as it was shown by Dijkgraaf, Verlinde and Verlinde \cite{DVV}. More precisely, \cref{thm:WK} is equivalent to the data of certain particular infinite sequence of operators $\mathcal{L}_m$ in the $\textbf{t}$, at most quadratic, such that
\begin{equation}
\mathcal{Z}^{\textup{WK}} := e^{F^{\textup{WK}}}, \qquad \qquad \mathcal{L}_{m}.{\mathcal{Z}}^{\textup{WK}} = 0, \;\; m \geq -1, \qquad \qquad [\mathcal{L}_m, \mathcal{L}_n] = (m-n)\mathcal{L}_{m+n}.
\end{equation}
\noindent
The first and the second Virasoro constraints, respectively, translate into \textit{string} and \textit{dilaton} equations:
\begin{align}
	 \mathcal{L}_{-1}.\mathcal{Z}^{\textup{WK}} = 0 & \quad \iff \quad \cor{ \tau_{d_{1}} \ldots \tau_{d_{n}} \tau_{0} }_{g} =
	\sum_{i=1}^{n} 
	\cor{ \tau_{d_{1}}\ldots \tau_{d_{i}-1}\ldots \tau_{d_{n} }}_{g},
	\label{eq:stringtau}
	\\
	 \mathcal{L}_0.\mathcal{Z}^{\textup{WK}} = 0 & \quad \iff \quad \cor{ \tau_{d_{1}} \ldots \tau_{d_{n}} \tau_{1} }_{g} =
	(2g-2+n)
	\cor{ \tau_{d_{1}} \ldots \tau_{d_{n}}}_{g}.
	\label{eq:dilatontau}
\end{align}
For $m > 0$, the Virasoro constraint $\mathcal{L}_{m}.\mathcal{Z}^{\textup{WK}} = 0$ takes the form
\begin{align*}
	(2m+3)!! \cor{\tau_{d_{1}} \ldots \tau_{d_{n}} \tau_{m+1}}_{g} 
	&= \sum_{i} \frac{(2d_{i} + 2 m + 1 ) !!}{(2 d_{i} -1 )!!} 
	\cor{\tau_{d_{1}} \ldots \tau_{d_{i} + m} \ldots \tau_{d_{n}}}  
	\\
	&+ \frac{1}{2} \sum_{a+b = m-1} 
	(2a+1)!!(2b+1)!! 
	\cor{\tau_{d_{1}} \ldots \tau_{d_{n}} \tau_{a} \tau_{b}}_{g-1} 
	\\
	&+ \frac{1}{2} \sum_{\substack{a + b = m-1 \\ I \sqcup J = \{1 ,\ldots, n\} \\ g_{1} + g_{2} = g}} 
	(2a+1)!!(2b+1)!! 
	\cor{\tau_{d_{I}} \tau_{a}}_{g_{1}} \cor{\tau_{d_{J}} \tau_{b}}_{g_{2}}  
\end{align*}

\subsection{String equation}

In the following we study the information string equation provides on the
coefficients $C_{g,n}(\lambda)$. It is straightforward to rewrite the string
equation \eqref{eq:stringtau} in terms of the amplitudes, which reads:
\begin{align}
	A_{g,n+1}(x_{1} ,\ldots,x_{n +1}) \big|_{x_{n+1}=0}
	&=
e_1 A_{g,n}(x_{1} ,\ldots,x_{n}). \label{eq:stringA}
\end{align} 
\noindent
Let us substitute our decomposition for \( A_{g,n} \) \eqref{eq:AgnInCgn} into the string
equation \eqref{eq:stringA}, and apply 
\eqref{eq:elementary1}, obtaining
\begin{equation}
	\sum_{\substack{|\lambda| \leq 3g-2+n \\ 2 \leq \lambda_{i} \leq n}} 
	C_{g,n+1}(\lambda)
	e_{\lambda}
	e_{1}^{3g-2+n-|\lambda|} 
	=
	\sum_{\substack{|\lambda| \leq 3g-3+n \\ 2 \leq  \lambda_{i} \leq n}} 
	C_{g,n}(\lambda)
	e_{\lambda}
	e_{1}^{3g-3+n-|\lambda|+1} .
\end{equation}
We then split the LHS in partitions with 
\( | \lambda | = 3g-2+n \) and partitions with \( | \lambda | \leq 3g-3+n \), we obtain
\begin{multline}
	\sum_{\substack{|\lambda| = 3g-2+n \\ 2 \leq \lambda_{i} \leq n }} 
	C_{g,n+1}(\lambda)
	e_{\lambda}
	+
	\sum_{\substack{|\lambda| \leq  3g-3+n \\ 2 \leq \lambda_{i} \leq n }} 
	C_{g,n+1}(\lambda)
	e_{\lambda}
	e_{1}^{3g-2+n-|\lambda|} 
	\\
	=
	\sum_{\substack{|\lambda| \leq 3g-3+n \\ 2 \leq \lambda_{i} \leq n }} 
	C_{g,n}(\lambda) 
	e_{\lambda} 
	e_{1}^{3g-2+n-|\lambda|}.
\end{multline}
We can match 
coefficients term by term:
\begin{align}
	\intertext{$\forall n \geq 1 $}
	C_{g,n+1}(\lambda) &= C_{g,n}(\lambda)	
	&&\text{ if }
	|\lambda| \leq 3g-3+n
			   &&\text{ and } 2 \leq \lambda_{i} \leq n 
			   \quad i = 1 ,\ldots, \ell(\lambda).
	\label{eq:string1}
	\\
\intertext{$\forall n \geq 2$}
	C_{g,n}(\lambda) &= 0 
	&&\text{ if }
	|\lambda| = 3g-3+n
			 &&\text{ and } 2 \leq \lambda_{i} < n 
			 \quad i = 1 ,\ldots, \ell(\lambda).
	\label{eq:string2}
\end{align}
As a consequence, we have the following lemma.
\begin{lemma}
	\label{lem:1}
	For all partitions \( \lambda \neq \emptyset \) 
	with \( |\lambda| \leq 3g-3+n \) 
	and \( 2 \leq \lambda_{i} \leq n \), \( i = 1 ,\ldots, \ell(\lambda) \),
	\begin{equation}
		C_{g,n}(\lambda) = 0  \qquad \text{ if } \qquad  |\lambda|- \lambda_{1} > 3g-3
		\label{eq:lem1}
	\end{equation}
\end{lemma}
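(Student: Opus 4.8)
The plan is to derive Lemma \ref{lem:1} purely from the two consequences of the string equation already established, namely \eqref{eq:string1} and \eqref{eq:string2}, together with the observation from the Proposition that the $C_{g,n}(\lambda)$ stabilise in $n$. The key point is that \eqref{eq:string2} gives a vanishing statement only for partitions that saturate the degree bound $|\lambda| = 3g-3+n$ and whose parts are all strictly smaller than $n$; the lemma upgrades this to the non-saturated range $|\lambda| - \lambda_1 > 3g-3$. First I would fix a partition $\lambda$ with $2 \le \lambda_i \le n$ for $i = 1,\dots,\ell(\lambda)$ and $|\lambda| - \lambda_1 > 3g-3$, and consider it as a partition living in a $\overline{\mathcal M}_{g,N}$ for $N$ chosen large: the idea is to use \eqref{eq:string1} to transport the coefficient $C_{g,n}(\lambda)$ (equivalently $C_g(\lambda)$) from the given $n$ up to a carefully chosen $N$ at which the degree bound is \emph{exactly} saturated, i.e.\ $|\lambda| = 3g-3+N$, so that $N = |\lambda| - 3g + 3$.

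At that value $N = |\lambda| - 3g + 3$, the hypothesis $|\lambda| - \lambda_1 > 3g-3$ rearranges to $\lambda_1 < |\lambda| - 3g + 3 = N$, so that in fact \emph{every} part satisfies $\lambda_i \le \lambda_1 < N$; this is precisely the hypothesis $2 \le \lambda_i < N$ needed to invoke \eqref{eq:string2} at $n = N$. Hence \eqref{eq:string2} yields $C_{g,N}(\lambda) = 0$. It then remains to check that $C_{g,N}(\lambda) = C_{g,n}(\lambda)$: by \eqref{eq:string1} the coefficients agree for all pairs as long as one stays in the regime $|\lambda| \le 3g-3+m$ and $\lambda_i \le m$; since we are moving from $n$ upward to $N = |\lambda| - 3g + 3 \ge n$ (using $|\lambda| \le 3g-3+n$, which is part of the lemma's hypothesis), the chain of equalities $C_{g,n}(\lambda) = C_{g,n+1}(\lambda) = \cdots = C_{g,N}(\lambda)$ holds step by step, each step being an instance of \eqref{eq:string1} since $|\lambda| \le 3g-3+m$ for all intermediate $m$ and $\lambda_i \le \lambda_1 < N$ so the part bound is met. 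Combining, $C_{g,n}(\lambda) = C_{g,N}(\lambda) = 0$.

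The only genuinely delicate point will be bookkeeping the boundary cases and making sure the interpolation stays inside the valid range of both \eqref{eq:string1} and \eqref{eq:string2} simultaneously — in particular the case $|\lambda| = 3g-3+n$ where $N = n$ and the statement is just \eqref{eq:string2} directly (needing $\lambda_1 < n$, which is again forced by $|\lambda| - \lambda_1 > 3g-3$), and checking the edge constraint $n \ge 2$ required by \eqref{eq:string2}, which follows because $|\lambda| \ge 2\ell(\lambda) \ge 2$ forces $N = |\lambda| - 3g + 3 \ge 2$ whenever $g = 0$ and is automatic for $g \ge 1$ since then $N \ge n \ge 1$ and the relevant partitions are nonempty. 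I would also double-check that the part bound $\lambda_i \le n$ assumed in the lemma is compatible with increasing $n$ to $N$ — it is, trivially, since enlarging $n$ only relaxes that constraint. No other input is needed; this is a short combinatorial consequence of the string equation.
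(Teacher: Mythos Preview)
Your approach is essentially the same as the paper's: set $N = |\lambda| - 3g + 3$, use \eqref{eq:string1} to identify $C_{g,n}(\lambda)$ with $C_{g,N}(\lambda)$, and then kill the latter with \eqref{eq:string2} since $\lambda_1 < N$ under the hypothesis $|\lambda|-\lambda_1 > 3g-3$.

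However, you have the direction of the transport backwards. From the lemma's hypothesis $|\lambda| \le 3g-3+n$ you conclude $N \ge n$ and speak of moving ``upward''; in fact that inequality gives $N = |\lambda| - 3g + 3 \le n$, so one must move \emph{downward} from $n$ to $N$, exactly as the paper does. With the direction corrected the verification of the intermediate steps goes through: for each $m$ with $N \le m \le n-1$ one has $|\lambda| \le 3g-3+m$ (since $m \ge N$) and $\lambda_i \le \lambda_1 < N \le m$, so \eqref{eq:string1} applies at every step to give $C_{g,n}(\lambda) = C_{g,n-1}(\lambda) = \cdots = C_{g,N}(\lambda)$. Your check that $N \ge 2$ (needed for \eqref{eq:string2}) is also slightly off but salvageable: from $|\lambda| - \lambda_1 > 3g-3$ and $\lambda_1 \ge 2$ one gets $|\lambda| \ge 3g$, hence $N \ge 3$. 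Once these sign slips are fixed, your argument and the paper's are the same.
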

\begin{proof}
	Let us fix \( g \) and \( n \) and 
	pick any nonempty partition 
	\( \lambda \) with \( \lambda_{i} \geq 2 \)
	and in which \( \lambda_{1} \leq n \).
	We want to apply several times equation \eqref{eq:string1},
	each time decreasing the value of \( n \) down to some \( n' \leq n \),
	either 
	reaching down to \( |\lambda| = 3g-3 + n' \)
	or 
	 \( \lambda_{1} = n' \).
	The first case requires \( \Delta_{1} = 3g-3+n - |\lambda|  \) steps
	and the second case requires \( \Delta_{2} = 
	 n - \lambda_{1}\) steps.
	If \( |\lambda| - n' = 3g-3 \) is reached first,
	but \( \lambda_{1} < n' \), then 
	by equation \eqref{eq:string2} we have
	\begin{equation}
		C_{g,n}(\lambda) = C_{g,n'}(\lambda) = 0.
	\end{equation}
	This will be the case whenever \( \Delta_{1} < \Delta_{2}  \),
	which is true whenever \( |\lambda| - \lambda_{1} > 3g-3 \).
\end{proof}
\begin{corollary}
	The string equation implies that Conjecture \ref{conj:main} holds true for \( g \leq 2 \).
\end{corollary}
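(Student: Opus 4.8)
\emph{Proof proposal.}
The plan is to read the corollary straight off Lemma \ref{lem:1} together with the $n$-independence of the coefficients established from the string equation (equation \eqref{eq:string1}): these say that $C_{g}(\lambda) = C_{g,n}(\lambda)$ vanishes as soon as $|\lambda| - \lambda_{1} > 3g-3$, for any partition $\lambda$ with all parts $\geq 2$. Hence the entire task reduces to checking that, in the range $g \leq 2$, the hypothesis $\ell(\lambda) > g$ of Conjecture \ref{conj:main} already forces $|\lambda| - \lambda_{1} > 3g-3$.

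First I would record the elementary counting bound: for a partition $\lambda$ of length $\ell = \ell(\lambda)$ with every part at least $2$,
\[
	|\lambda| - \lambda_{1} \;=\; \sum_{i=2}^{\ell}\lambda_{i} \;\geq\; 2(\ell-1).
\]
Then, assuming $\ell(\lambda) > g$, i.e.\ $\ell \geq g+1$, this yields $|\lambda| - \lambda_{1} \geq 2g$. Finally, $2g > 3g-3 \iff g < 3$, so for $g \in \{0,1,2\}$ and $\ell(\lambda) > g$ we land in the vanishing regime of Lemma \ref{lem:1}, whence $C_{g}(\lambda) = 0$ — which is exactly the statement of Conjecture \ref{conj:main} for these genera, in the form \eqref{eq:conj}. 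Spelled out case by case: $g=0$ gives $|\lambda|-\lambda_{1} \geq 0 > -3$ for every nonempty $\lambda$ (recovering $A_{0,n} = e_{1}^{n-3}$); $g=1$ gives $|\lambda|-\lambda_{1} \geq 2 > 0$ once $\ell(\lambda)\geq 2$ (recovering \eqref{eq:A1n}); $g=2$ gives $|\lambda|-\lambda_{1} \geq 4 > 3$ once $\ell(\lambda)\geq 3$.

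There is essentially no obstacle here: the argument is a one-line estimate. What is worth emphasizing is that it is \emph{sharp} — the bound $|\lambda|-\lambda_{1} \geq 2g$ no longer beats $3g-3$ for $g \geq 3$, so the string equation alone cannot push the conjecture past genus $2$, and the later input from the dilaton equation and higher Virasoro constraints is genuinely needed. The only points deserving a little care are the boundary cases: the empty partition (automatically excluded in \eqref{eq:conj}) and partitions saturating $|\lambda|-\lambda_{1} = 3g-3$ — e.g.\ $\lambda = (k,3)$ in genus $2$ — which are \emph{not} annihilated by Lemma \ref{lem:1} and indeed occur with nonzero coefficient in \eqref{eq:A2n}, consistently with $\ell(\lambda) = 2 = g$.
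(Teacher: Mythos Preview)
Your proposal is correct and follows essentially the same approach as the paper: both arguments rest entirely on Lemma~\ref{lem:1}, observing that for $g\leq 2$ the constraint $|\lambda|-\lambda_{1}\leq 3g-3$ already forces $\ell(\lambda)\leq g$. The paper verifies this by enumerating the surviving partitions case by case, whereas you package the same content into the inequality $|\lambda|-\lambda_{1}\geq 2(\ell(\lambda)-1)$ and the observation $2g>3g-3\iff g<3$; this is a cosmetic difference, not a substantive one.
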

\begin{proof}
	The genera \( g = 0,1 \) were already discussed in the introduction,
	and are again verified here. For \( g = 0 \)
	there are no nonempty partitions 
	and for \( g = 1 \) there is just \( \lambda = (\lambda_{1}) \).
	For \( g = 2 \) we require \( |\lambda| - \lambda_{1} \leq 3 \)
	and the only nonempty partitions are 
	 \( (\lambda_{1}), (\lambda_{1} ,2) \),
	and \( (\lambda_{1} , 3) \).
\end{proof}

\begin{remark} \label{rmk:bound}
Equation \eqref{eq:string1} suggests that we could
define coefficients \( C_{g}(\lambda) \) corresponding
to \( C_{g,n}(\lambda) \) for any \( n \) large enough.  If \( n \) is too small,
the corresponding monomial 
\(e_{\lambda} e_1^{3g-3+n - |\lambda|}\)
does not appear in \( A_{g,n} \), and \( C_{g,n}(\lambda) \)
is formally zero.
Therefore, we want to define \( C_{g}(\lambda) \) as the 
common value for all 
non trivial \( C_{g,n}(\lambda) \)'s.
Those can be computed whenever \( n \geq \lambda_{1} \) 
and \( |\lambda| \leq 3g-3+n \).
In particular, for non empty partitions we can always set 
\begin{equation}
	C_{g}(\lambda)
	=
	C_{g,n_0}(\lambda)
\end{equation}
where \( n_{0} = \max \{ \lambda_{1} , |\lambda| - (3g-3)\}  \).
Remark however that 
partitions with \( |\lambda| - (3g-3) > \lambda_{1} \) 
have vanishing coefficient according to lemma \ref{lem:1},
so the non-trivial coefficients are in general
\( C_{g}(\lambda) = C_{g, \lambda_{1}}(\lambda) \).
When \( \lambda = \emptyset \), the
coefficient \( C_{g,n}(\emptyset) \) will appear
in \( A_{g,n}(\textbf{x}) \) as long as 
\( 3g-3+n \geq 0 \), so for \( g = 0 \) we
can pick \( C_{0}(\emptyset) = C_{0,3}(\emptyset) \),
and for \( g \geq 1 \) we set \( C_{g}(\emptyset) = C_{g,1}(\emptyset ) \).
\end{remark}

\begin{corollary}\label{cor:33}
\begin{equation}
	A_{g,n}(\textbf{x}) 
	=
	\sum_{\substack{|\lambda| \leq 3g-3+n \\ 
			|\lambda| - \lambda_{1} \leq 3g-3\\
	\lambda_{i} \geq 2 }}
	C_{g}(\lambda) \; e_{\lambda}  \; e_{1}^{3g-3+n-|\lambda|}.   
	\label{eq:AgnInCG}
\end{equation}
\end{corollary}
In appendix
\ref{sec:Dgnappendix} we give a table of coefficients
\( D_{g,n}(\lambda \sqcup (1)^{3g-3+n})  = C_{g,n}(\lambda) \) 
for \( g = 3 \) and \( n \leq 5 \) to illustrate explicit checks
of relations \eqref{eq:string1}, \eqref{eq:string2} and \eqref{eq:lem1}.
\subsection{Comparison between the conjecture and the string equation constraint}

Let us comment on how much conjecture \ref{conj:main}
actually constraints the number of terms in the elementary symmetric polynomial
decomposition of \( A_{g,n} \),  compared to the constraint imposed by the string equation.

From the string equation and \eqref{eq:AgnInCG}, we see
that partitions \( \lambda = (\lambda_{1} )\sqcup\mu  \) can have at 
most  \( \ell(\mu) =  \floor{\frac{3g-3}{2}}\).
In conjecture \( \ref{conj:main} \) we claim that the 
maximum length is actually \( \ell(\mu) = g-1 \).
\begin{itemize}
\item Let us denote by \( \mathcal{Q}_{g} \) the
set of partitions
\( \mu \) with \( |\mu| \leq 3g-3 \) 
and \( \mu_{i} \geq 2 \).
\item Let us denote by \( \mathcal{Q}_{g}^* \) 
the set \( \mathcal{Q}_{g} \) under the further constraint \( \ell(\mu) \leq g-1 \).
\item Let us denote by \( \mathcal{Q}_{g}^{**} \) 
the set \( \mathcal{Q}_{g} \) under the further constraint \( \ell(\mu) \geq g-1 \).
\end{itemize}

One may ask how much bigger is 
\( \mathcal{Q}_{g} \) compared to \( \mathcal{Q}_{g}^* \).  The first observations is that each partition in $\mathcal{Q}_g \setminus \mathcal{Q}_g^{*}$ contains a block of the form $(2,2,2)$.  By considering the bijection that removes such block we get a bijection between
$
\mathcal{Q}_g \setminus \mathcal{Q}_g^{*} \leftrightarrow \mathcal{Q}_{g-2}^{**},
$
and therefore an equality of their cardinalities.  This way we have shown that the conjecture \ref{conj:main} discards from $\mathcal{Q}_g$ a subset of $\mathcal{Q}_{g-2}$.  The table below shows the cardinalities of these sets for small values of $g$.

\vspace{20pt}
\begin{center}
\begin{tabular}{ |c|c|c|c|c|c|c|c|}
 \hline
 $g$ & 6 & 9 & 12 & 15 & 18 & 21  \\
 \hline
 $|\mathcal{Q}_g|$  	& 176 & 1575 & 10143 & 53174 & 239943 & 966467 \\
 $|\mathcal{Q}_g^{*}|$ & 167 & 1528 & 9973 & 52649 & 238521 & 962922  \\
 $|\mathcal{Q}_{g-2}^{**}|$ & 9 & 47 & 170 & 525 & 1422 & 3545\\
 \hline
\end{tabular}
\end{center}
\vspace{20pt}

\noindent
We see that our conjecture only discards a fraction of the total amount of partitions considered.  We make this point to emphasize that
the strength of the conjecture does not rely in the vanishing of
a large amount of terms out of the elementary symmetric decomposition of
\( A_{g,n} \),  but rather in unveiling a peculiar and unexpected constraint these partitions obey. It would be ideal to have a geometric reason behind this vanishing.
We will see an application of this constraint in section \ref{sec:gdimintegral}.

\subsection{Dilaton equation}
It it straightforward to rewrite the dilaton equation \cref{eq:dilatontau} in terms of the amplitudes $A_{g,n}$, obtaining
\begin{equation}
	\frac{\partial}{\partial x_{n + 1}}
	A_{g,n+1}(x_{1},\ldots,x_{n+1})\big|_{x_{n+1} = 0}
	= (2g-2+n) A_{g,n}(x_{1},\ldots, x_{n}). \label{eq:dilat}
\end{equation}
Let us now employ the expansion of \( A_{g,n} \) 
which we have refined in the previous section \eqref{eq:AgnInCG},
and compute the left hand side with \eqref{eq:elementary2}:
\begin{multline}
	\sum_{|\lambda| \leq 3g-2+n} 
	C_{g}(\lambda) \sum_{j = 1}^{l(\lambda)} 
	e_{\lambda_{j} -1} \prod_{i \neq j} 
	e_{\lambda_{i}} 
	e_{1}^{3g-2+n- |\lambda|} \\
	+
	\sum_{ |\lambda| \leq 3g-2+n} 
	C_{g}(\lambda) e_{\lambda} (3g-2+n- |\lambda| ) e_{1}^{3g-3+n - |\lambda|} 
	\\
	\overset{}{=} 
	(2g-2+n)
	\sum_{|\lambda| \leq 3g-3+n} 
	C_{g}(\lambda) e_{\lambda} e_{1}^{3g-3+n- |\lambda|}
\end{multline}
To extract an identity on \( C_{g}(\lambda) \) we
must first reconstruct the elementary symmetric polynomial basis
in the first term on the left hand side. We have
two cases to consider: 
\begin{enumerate}
\item when \( \lambda_{j} = 2  \), 
then \( e_{\lambda_{j} -1} = e_{1} \) 
comes out of \( e_{\lambda} \) 
and we can relabel what is left
as a new partition \(\lambda'\) , 
which carries
the coefficient associated to
\( \lambda ' \sqcup (2) \). 
There
are \( m_{2}(\lambda) \) 
ways to
remove a row of length 2 from the partition \(\lambda\) 
to get \( \lambda' \),
so it comes
with multiplicity
\( m_{2}(\lambda') +1 \).

\item when \( \lambda_{j} > 2 \) and thus
\( \lambda_{j} -1 > 1 \),  we relabel it into a new partition \(\lambda'\).  This term
carries the coefficients of
all the partitions from which we
can obtain \( \lambda' \) by removing one block
and then reordering the rows (those partitions can obviously be obtained starting
from \(\lambda'\) by adding one block, but they come
with a different multiplicity than if we just counted the
ways to construct
them from \( \lambda' \) in all possible ways of adding one block).
 
We can describe the multiplicity in the following way. Let \( \lambda + (1)_{i} \) be the partition obtained
by adding to \(\lambda\) a block at the $i$-th row (and then reordering).
It is enough to consider \( \lambda + (1)_{i} \) 
for \( i = 1 \) and whenever \( \lambda_{i} < \lambda_{i-1} \)
(those cases that do not require reordering the rows).
Let \( \text{step}(\lambda) \subset \{1 ,\ldots, \ell(\lambda)\} \) 
represent this set of steps. Let us count in how many ways can we recover \( \lambda \) from
\( \lambda + (1)_{i} \) by removing a single block.
This is the number of rows of length 
\( \lambda_{i} + 1 \) in \( \lambda + (1)_{i} \),
which is the same at the number of rows
of length \( \lambda_{i} + 1 \) in \( \lambda \) plus one. Therefore we have:
\end{enumerate}
\begin{multline}
	\sum_{|\lambda| \leq 3g-3+n} 
	\sum_{j \in \text{step}(\lambda)}
	(m_{\lambda_{j} +1}(\lambda) + 1)
	C_{g}(\lambda+(1)_{j}) 
	e_{\lambda}
	e_{1}^{3g-3+n- |\lambda|} \\
	+
	\sum_{|\lambda| \leq 3g-4+n} 
	(m_{2}(\lambda) + 1) 
	C_{g}(\lambda \sqcup (2)) e_{\lambda} 
	e_{1}^{3g-3+n-|\lambda|} \\
	+
	\sum_{|\lambda| \leq 3g-3+n} 
	(g - |\lambda| ) 
	C_{g}(\lambda ) e_{\lambda} e_{1}^{3g-3+n - |\lambda|} 
	=
	0.
\end{multline}
Matching coefficients, we obtain
\begin{equation}
	\sum_{j \in \text{step}(\lambda)}
	(m_{\lambda_{j} +1}(\lambda) + 1)
	\,
	C_{g}(\lambda + (1)_{j})
	+
	(m_{2}(\lambda) + 1) 
	\,
	C_{g}(\lambda \sqcup (2))
	+
	(g - |\lambda| ) 
	\,
	C_{g}(\lambda ) 
	=
	0.
	\label{eq:Cgdilaton}
\end{equation}

\subsection{Solving the recursion}
Equation \eqref{eq:Cgdilaton} is a recursion on \( \lambda \) for fixed \( g \):
the \( C_{g}(\lambda) \) for smaller partitions are related to the
\( C_{g}(\lambda) \) for bigger partitions.  However,  whenever the 
partition minus its first row has size exceeding \( 3g-3 \),
the coefficients vanish.  To make use of this, let us split partitions as
\( \lambda = (\lambda_{1} ) \sqcup ( \lambda_{2} ,\ldots, \lambda_{\ell(\lambda) })
\coloneqq (k) \sqcup \mu\) and let us denote
\begin{equation}
	C_{g}(\lambda) = C_{g}(k, \mu).
\end{equation}
\begin{figure}
\begin{center}\includegraphics[width = 6cm]{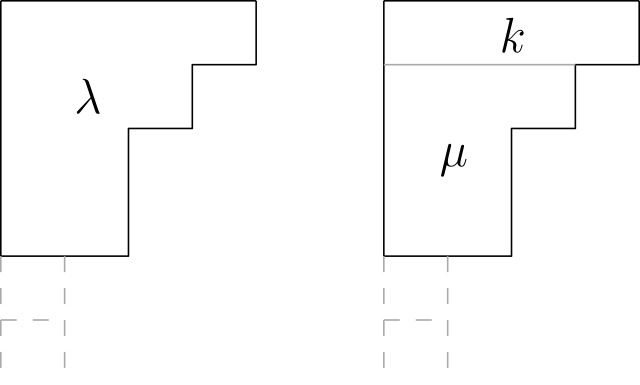}\end{center}
\caption{From \( \lambda \) to (\( k \),\( \mu \)) }
\end{figure}

\noindent
This notation only makes sense for \( k \geq \mu_{1} \),
and from lemma \ref{lem:1} we must have \( |\mu| \leq 3g-3 \).
For the empty partition \( \lambda = \emptyset \)
we keep the notation \( C_{g}(\emptyset) \),
whereas \( C_{g}(k, \varnothing ) \) indicates a partition
of a single row of length \( k \), \( k \geq 2 \).
The coefficient \( C_{g}(\emptyset) \)
can be computed for all \( g \) from the known formula for the
1-point function, and it is given in section \ref{sec:n1}.
Let us therefore focus on the remaining partitions
that can we decomposed at \( (k, \mu ) \) for \( k \geq 2 \).
Let us rewrite equation \eqref{eq:Cgdilaton} in terms of pairs \( (k, \mu ) \).
For \( \lambda = \emptyset \),
\begin{equation}\label{eq:Cg2empty}
	C_{g}(2,\varnothing)
	=
	- g \, C_{g}(\emptyset ) .
\end{equation}
And for all other partitions,
\begin{multline}
	C_{g}(k+1, \mu) 
	+\sum_{j \in \text{step}(\mu)}
	(m_{\mu_{j} +1}(\mu) + 1)
	C_{g}(k, \mu+(1)_{j}) 
	\\
	+ \delta_{k, \mu_{1}+1} C_{g}(k, \mu + (1)_{1} )
	+ \delta_{k, \mu_{1}} C_{g}(k, \mu + (1)_{\iota}) 
	\\[10pt]
	+ ( m_{2}(\mu) + \delta_{k,2}+ 1) C_{g}(k, \mu \sqcup (2)) 
	+ (g- k -|\mu|)C_{g}(k, \mu)
	=0.
\end{multline}

\noindent
Here, \( \iota \) is the first index such that \( \mu_{1} = \mu_{\iota} + 1 \) 
(which could possibly not
exist, and in that case this extra term can be ignored). 
If terms of the form \( C_{g}(k, \mu') \) with \( \mu'_{1}>k  \) appear in the sum, we discard them
by convention.  This is equivalent to say that 
if \( \mu_{1} = k \) then \( 1 \) should
not be included in \( \text{step}(\mu) \).

Let us impose \( k \geq g - |\mu| +1 \) and define
\begin{equation}
	C_{g}(k, \mu) = (-1)^{\mu + l(\mu)} ( k + |\mu|-g-1)! \, \mathcal{C}_{g}(k, \mu).
	\label{def:mathcalC}
\end{equation}
The relation now reads
\begin{multline}
	\label{eq:mathcalC}
	\mathcal{C}_{g}(k+1, \mu) 
	-
	\mathcal{C}_{g}(k, \mu) 
	=\\
	\sum_{j \in \text{step}(\mu)} 
	(m_{\mu_{j} +1}(\mu) + 1)
	\mathcal{C}_{g}(k, \mu + (1)_{j}) 
	+ (k+|\mu| - g + 1 )( m_{2}(\mu) + 1) \mathcal{C}_{g}(k, \mu \sqcup (2)) 
	\\
	+ (|\mu| - g + 3 ) \delta_{k,2} C_{g}(k, \mu \sqcup (2))
	+ \delta_{k, \mu_{1}+1} \mathcal{C}_{g}(k, \mu + (1)_{1} )
	+ \delta_{k, \mu_{1}} \mathcal{C}_{g}(k, \mu + (1)_{\iota}).
\end{multline}
Let us adopt the convention that \( \mu_{1} = 2 \) for \(\mu = \varnothing\).
We have the following lemma.
\begin{lemma}
	\label{lem:2}
	Let \( \mu \in \mathcal{Q}_{g} \).
	For \( | \mu| = 3g-3 \) ,  the
	coefficients \( \mathcal{C}_{g}(k, {\mu}) \)  are
	constant functions of \( k \) whenever \( k \geq {\mu_{1}} \). 
	For \( |\mu| < 3g-3 \),  the
	coefficients \( \mathcal{C}_{g}(k, \mu) \) for \( k > 3g-3 - |\mu| 
	+ {\mu_1} \)
	are polynomials in \( k \) of degree \( 3g-3- |\mu| \).
\end{lemma}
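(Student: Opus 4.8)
The plan is to prove Lemma~\ref{lem:2} by a double induction. The outer induction is on \(|\mu|\), decreasing from \(3g-3\); the inner induction is on \(k\). The base case \(|\mu| = 3g-3\) is the cleanest: here every partition \(\mu \sqcup (2)\) and \(\mu + (1)_j\) appearing on the right-hand side of \eqref{eq:mathcalC} has \(|\cdot| > 3g-3 = \max |\mu|\), hence by Lemma~\ref{lem:1} the corresponding \(C_g\) — and therefore \(\mathcal{C}_g\) — vanishes. The right-hand side of \eqref{eq:mathcalC} collapses to zero once \(k \geq \mu_1\) (so that the \(\delta_{k,2}\) and \(\delta_{k,\mu_1+1}\) boundary terms are inert, and the \(\delta_{k,\mu_1}\) term lands on a partition of size \(3g-2\) which also vanishes), giving \(\mathcal{C}_g(k+1,\mu) = \mathcal{C}_g(k,\mu)\) for all \(k \geq \mu_1\). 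That is exactly the claimed constancy.

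For the inductive step, fix \(\mu\) with \(|\mu| < 3g-3\) and assume the lemma holds for all partitions \(\nu \in \mathcal{Q}_g\) with \(|\nu| > |\mu|\). Every term on the right-hand side of \eqref{eq:mathcalC} is, by the inductive hypothesis, either zero or a polynomial in \(k\): the terms \(\mathcal{C}_g(k,\mu+(1)_j)\) and \(\mathcal{C}_g(k,\mu\sqcup(2))\) have \(|\nu| = |\mu|+1\), so they are polynomials in \(k\) of degree \(3g-3-(|\mu|+1) = 3g-4-|\mu|\) for \(k\) large enough; multiplying by the linear factor \((k+|\mu|-g+1)\) in front of the \(\mu \sqcup (2)\) term raises its degree to at most \(3g-3-|\mu|\). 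The \(\delta_{k,\mu_1+1}\) and \(\delta_{k,\mu_1}\) contributions are supported on a single value of \(k\), so they only affect a finite initial range and, in particular, do not touch the regime \(k > 3g-3-|\mu|+\mu_1\) claimed in the statement. Hence for \(k\) in that regime the right-hand side is a genuine polynomial \(Q(k)\) in \(k\) of degree at most \(3g-3-|\mu|\), and \eqref{eq:mathcalC} reads \(\mathcal{C}_g(k+1,\mu) - \mathcal{C}_g(k,\mu) = Q(k)\). Summing this first-order difference equation (a discrete integration) produces \(\mathcal{C}_g(k,\mu)\) as a polynomial in \(k\) whose degree is \(\deg Q + 1 \leq 3g-2-|\mu|\) a priori; one then has to check the degree is actually \(3g-3-|\mu|\), i.e.\ that the leading coefficient of \(Q\) vanishes (equivalently, that the degree does not jump). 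This forced degree drop should follow from the same counting that produced the factorial normalisation in \eqref{def:mathcalC}: the point of pulling out \((k+|\mu|-g-1)!\) was precisely to absorb one order of growth, so the compatibility of \eqref{def:mathcalC} across adjacent partition sizes pins the degree. Concretely, I would track the top-degree coefficients through \eqref{eq:mathcalC} and show the would-be leading term telescopes away, using that \(m_{\mu_j+1}(\mu)+1\) and \(m_2(\mu)+1\) are the exact multiplicities coming from the dilaton combinatorics.

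The main obstacle is this last degree-counting point: showing the summed polynomial has degree exactly \(3g-3-|\mu|\) rather than one more, which amounts to verifying a cancellation among the leading coefficients of the \(\mathcal{C}_g(k,\mu+(1)_j)\) and \((k+|\mu|-g+1)\mathcal{C}_g(k,\mu\sqcup(2))\) terms. A secondary, more bookkeeping-heavy obstacle is handling the boundary \(\delta\)-terms carefully enough to justify that they are confined to \(k \leq 3g-3-|\mu|+\mu_1\) and hence irrelevant to the asymptotic polynomial — in particular confirming that the convention discarding \(C_g(k,\mu')\) with \(\mu'_1 > k\) is consistent with the claimed threshold on \(k\). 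I would organise the write-up so that the base case and the structure of the induction come first, then isolate the degree claim as a short separate computation.
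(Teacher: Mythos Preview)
Your overall strategy---downward induction on \(|\mu|\) using the recursion \eqref{eq:mathcalC}---is exactly the paper's, and your base case \(|\mu|=3g-3\) is correct. The trouble is a single miscount in the inductive step that manufactures an obstacle which does not exist. You write that \(\mathcal{C}_g(k,\mu\sqcup(2))\) has \(|\nu|=|\mu|+1\); in fact \(|\mu\sqcup(2)|=|\mu|+2\). By the inductive hypothesis \(\mathcal{C}_g(k,\mu\sqcup(2))\) is therefore a polynomial of degree \(3g-3-(|\mu|+2)=3g-5-|\mu|\), and after multiplication by the linear factor \((k+|\mu|-g+1)\) its degree is \(3g-4-|\mu|\), the \emph{same} as the degree of the \(\mathcal{C}_g(k,\mu+(1)_j)\) terms. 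Hence the right-hand side \(Q(k)\) already has degree at most \(3g-4-|\mu|\), and discrete integration yields \(\mathcal{C}_g(k,\mu)\) of degree at most \(3g-3-|\mu|\) with no cancellation required. Your ``main obstacle'' about a forced degree drop, the leading-term telescoping, and the role of the factorial normalisation in absorbing an order of growth are all artefacts of this miscount and should be deleted.

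A smaller point on the threshold. You argue that the \(\delta\)-terms are confined to \(k\le\mu_1+1\) and therefore avoid the range \(k>3g-3-|\mu|+\mu_1\). That is true, but it is not the reason this particular bound appears. The inductive hypothesis tells you \(\mathcal{C}_g(k,\mu+(1)_j)\) is polynomial only for \(k>3g-3-|\mu+(1)_j|+(\mu+(1)_j)_1\); the worst case is \(j=1\), where \((\mu+(1)_1)_1=\mu_1+1\) and \(|\mu+(1)_1|=|\mu|+1\), giving precisely \(k>3g-3-|\mu|+\mu_1\). So the threshold in the lemma is dictated by the domain of validity of the inductive hypothesis on the descendant partitions, not merely by killing the \(\delta\)'s. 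Once you fix the size of \(\mu\sqcup(2)\) and explain the threshold this way, your write-up matches the paper's proof.
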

\begin{proof}
	Consider \eqref{eq:mathcalC} for a partition \( \mu \) with \( \mu = 3g-3 \):
	the RHS vanishes,  which implies that \( C_{g}(k, \mu) \) is a constant function of \( k \). This is true for all \( k \geq \mu_{1} \),
	and for \( \mu = \varnothing \) when \( g = 1 \) this is true for
	\( k \geq 2 \).

	When \(\mu < 3g-3\), we want to get rid of the \( \delta \) 
	terms, so we impose \( k > \mu_{1} + 1 \),
	(in the case \( \mu = \varnothing \) and \( g >1 \),
	we impose \( k \geq 3 \)).  Hence we have:
	\begin{multline}
	\mathcal{C}_{g}(k+1, \mu) 
	-
	\mathcal{C}_{g}(k, \mu) 
	=\\
	\sum_{j \in \text{step}(\mu)} 
	(m_{\mu_{j} +1}(\mu) + 1)
	\mathcal{C}_{g}(k, \mu + (1)_{j}) 
	+ (k+|\mu| - g + 1 )( m_{2}(\mu) + 1) \mathcal{C}_{g}(k, \mu \sqcup (2)).
	\label{eq:induct}
	\end{multline}
	Let us analyse what is needed in order to write \eqref{eq:induct} for all 
	partitions \( \mu' \) that can be constructed by
	adding blocks to \( \mu \), still with \( \mu' \leq 3g-3 \).
	This requires that we also impose all conditions 
	\( k > {\mu'_{1}} + 1 \)
	(except for the max size partition that will have constant
	coefficients).
	The most constraining partition for \( k \) will
	be the partition of size \( 3g-4 \)
	where we placed all the remaining blocks on the first row,
	giving \( {\mu'_{1}} = {\mu_{1}} + 3g-4 - |\mu| \).
	Therefore if \( k > 3g-3-|\mu|+\mu_1 \) we
	can impose all relations \eqref{eq:induct} simultaneously
	for \( \mu \) and all its ascending partitions.
	Now proceed by induction:
	if for all partitions \(\mu'\) with size \( |\mu'| > |\mu| \),
	\( \mathcal{C}_{g}(k, \mu') \) 
	are polynomials of degree \( 3g-3-|\mu'| \) for 
	\( k > 3g-3-|\mu'|+ {\mu'_{1}} \), then
	by \eqref{eq:induct},
	the difference \( \mathcal{C}_{g}(k+1, \mu) - \mathcal{C}_{g}(k, \mu) \) is
	a polynomial of degree \( 3g-3- | \mu| - 1 \)
	for \( k > 3g-3-|\mu| + {\mu_{1}} \),
	so \( \mathcal{C}_{g}(k, \mu) \) is itself a polynomial of degree 
	\( 3g-3-|\mu| \).
\end{proof}
\begin{corollary}
	For \( g \geq 1 \), for \( k > 3g-3-|\mu|+ {\mu_1} - \delta_{|\mu|,3g-3} \),
	\begin{equation}
		C_{g}(k, \mu) = ( k + |\mu| -g-1)! \, Q^{\mu}_{g}(k)
	\end{equation}
	where \( Q_{g}^{\mu} \) is a polynomial of degree \( 3g-3- |\mu| \).
	\label{lem:poly}
\end{corollary}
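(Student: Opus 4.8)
The plan is to read this statement straight off Lemma~\ref{lem:2} via the change of normalisation \eqref{def:mathcalC}. Recall that the latter reads $C_{g}(k,\mu) = (-1)^{|\mu|+\ell(\mu)}\,(k+|\mu|-g-1)!\,\mathcal{C}_{g}(k,\mu)$, so I would simply set
\[
	Q^{\mu}_{g}(k) \coloneqq (-1)^{|\mu|+\ell(\mu)}\,\mathcal{C}_{g}(k,\mu),
\]
a function of $k$ which differs from $\mathcal{C}_{g}(k,\mu)$ only by a $k$-independent sign and therefore has exactly the same polynomial behaviour and degree in $k$. After this substitution the content of the corollary becomes precisely the assertion that $\mathcal{C}_{g}(k,\mu)$ is polynomial in $k$ of degree $3g-3-|\mu|$ on the indicated range, which is what Lemma~\ref{lem:2} delivers.

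The only genuine point to check is that the single range of validity in the corollary, $k > 3g-3-|\mu|+\mu_{1}-\delta_{|\mu|,3g-3}$, correctly packages the two separate ranges in Lemma~\ref{lem:2}, so I would split into those two cases. If $|\mu| = 3g-3$, the lemma says $\mathcal{C}_{g}(k,\mu)$ is constant for $k \geq \mu_{1}$, i.e.\ a polynomial of degree $0 = 3g-3-|\mu|$; here the corollary's bound reduces to $k > \mu_{1}-1$, equivalently $k \geq \mu_{1}$ (with the convention $\mu_{1}=2$ for $\mu=\varnothing$, which also covers the case $g=1$). If $|\mu| < 3g-3$, then $\delta_{|\mu|,3g-3}=0$ and the corollary's bound is $k > 3g-3-|\mu|+\mu_{1}$, which is exactly the hypothesis under which Lemma~\ref{lem:2} asserts that $\mathcal{C}_{g}(k,\mu)$ is a polynomial of degree $3g-3-|\mu|$. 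Thus the Kronecker delta in the statement is merely a device unifying the ``maximal-size-partition'' case with the generic case into one inequality.

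Finally I would verify that the right-hand side makes sense on this range, i.e.\ that $(k+|\mu|-g-1)!$ has non-negative argument: since every part of $\mu$ is at least $2$ (and $\mu_{1}=2$ by convention when $\mu=\varnothing$) and $\delta_{|\mu|,3g-3}\le 1$, the hypothesis forces $k \geq 3g-1-|\mu|$, whence $k+|\mu|-g-1 \geq 2g-2 \geq 0$ for $g \geq 1$; in particular the constraint $k \geq g-|\mu|+1$ under which \eqref{def:mathcalC} was introduced is automatically met. The substantive work — showing that the dilaton recursion \eqref{eq:mathcalC} forces polynomiality, with the degree dropping by one at each step of the induction on $|\mu|$ — has already been carried out in Lemma~\ref{lem:2}, so I do not expect any substantial obstacle here; the step demanding the most care is simply the bookkeeping of the $k$-ranges and the $\mu=\varnothing$ edge case, exactly as in the second paragraph above.
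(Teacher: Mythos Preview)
Your proposal is correct and follows essentially the same route as the paper: both read the corollary directly off Lemma~\ref{lem:2} via the normalisation \eqref{def:mathcalC}, and the substantive check in each case is that the corollary's single bound on $k$ simultaneously enforces the range in Lemma~\ref{lem:2} and the condition $k > g-|\mu|$ under which \eqref{def:mathcalC} is defined. Your explicit unpacking of the two cases $|\mu|=3g-3$ versus $|\mu|<3g-3$ (to justify the Kronecker delta) and the verification that the factorial argument is non-negative are a bit more detailed than the paper's version, but the argument is the same.
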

\begin{proof}
	We have to compare condition 
	\( \mathbb{A}: k > 3g-3-|\mu|+ {\mu_1} - \delta_{|\mu|,3g-3} \) 
	from the lemma above and condition \( \mathbb{B}: k > g-|\mu| \) required by
	the decomposition \eqref{def:mathcalC}.
	The difference is
	\begin{equation}
		2g-3+ {\mu_1} - \delta_{|\mu|,3g-3} .
	\end{equation}
	For \( g \geq 2 \) or for \( \mu_{1} \geq 2  \) this quantity is positive,  and hence \( \mathbb{A} \implies \mathbb{B} \).
	When \( g = 1  \) and \( \mu = \varnothing \), 
	\( \mathbb{A}: k > -1 \) and \( \mathbb{B}: k > 1 \), 
	although both conditions are taken under the constraint \( k \geq 2 \).  Therefore, whenever lemma \ref{lem:2} applies, it is possible to apply the decomposition 
	in the form of \eqref{def:mathcalC}.
\end{proof}
\begin{remark}
	Corollary \ref{lem:poly} is only a fraction of the information
	that we can extract from the dilaton equation. Building
	on that result, one could then decompose 
	\( C_{g}(k, \mu) \) in
	a suitable basis of polynomials,
\begin{equation}
	C_{g}(k, \mu) = \sum_{j = 0}^{3g-3-|\mu|} 
	\alpha_{j}^{(g)}(\mu) P_{j}^{\mu}(k),
\end{equation}
	and obtain a recursion on the coefficients
	\( \alpha_{j}^{(g)}(\mu) \) with initial values
	given by \( \alpha_{0}^{(g)}(\mu) \).
	While this distillates the initial data in a yet smaller
	set of coefficients, it is not so useful at present because we
	can only compute \( \alpha_{0}^{(g)}(\mu) \) 
	from the entire polynomial \( C_{g}(k, \mu) \).
\end{remark}

\begin{lemma} For any $g\geq 0$, we have
\begin{equation}
C_g(\emptyset) = 1, \qquad \qquad C_g(2,\emptyset) = -g.
\end{equation}
\begin{proof}
The cases for $g=0$ and $g=1$ are checked directly,  the closed formulae are known in the literature and given in \eqref{eq:A0n} and \eqref{eq:A1n}.  It is well-known that for $n=1$ one has $\tilde{A}_{g,1} = e_1^{3g - 2}$,  and therefore $C_g(\emptyset) = 1$.  Use the string equation backwards to exploit the $n-$independence in this degenerate case and prove that indeed $C_{g}(\emptyset) = 1$ in each $\tilde{A}_{g,n}$.  The  coefficient $C_g(2,\emptyset) = -g$ is given by equation \eqref{eq:Cg2empty}.
\end{proof}
\end{lemma}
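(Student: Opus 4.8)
The plan is to reduce the statement to the one-marked-point case together with the string and dilaton relations already obtained in this section.

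First I would dispose of low genus directly. The closed formulae \eqref{eq:A0n} and \eqref{eq:A1n} give $\tilde{A}_{0,n}=e_1^{n-3}$ and $\tilde{A}_{1,n}=e_1^{n}-\sum_{k\geq 2}(k-2)!\,e_k e_1^{n-k}$; reading off the coefficient of $e_1^{3g-3+n}$ yields $C_0(\emptyset)=C_1(\emptyset)=1$, while the coefficient of $e_2 e_1^{3g-5+n}$ is $0$ for $g=0$ (no $e_2$ term occurs) and equals $-(2-2)!=-1$ for $g=1$, matching $-g$ in both cases.

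Next I would treat $n=1$ for arbitrary $g\geq 1$. Since $\dim_{\mathbb{C}}\oM_{g,1}=3g-2$, only the top power of $\psi_1$ survives in $A_{g,1}$, so $A_{g,1}(x)=\cor{\tau_{3g-2}}_g\,x^{3g-2}$. Feeding in the classical one-point evaluation $\cor{\tau_{3g-2}}_g=\tfrac{1}{24^g g!}$ (a standard consequence of \cref{thm:WK}) and using that $e_1=x$ in a single variable, we get $\tilde{A}_{g,1}(x)=24^g g!\,A_{g,1}(x)=x^{3g-2}=e_1^{3g-2}$, and since $3g-2=3g-3+n$ at $n=1$ this reads $C_{g,1}(\emptyset)=1$. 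To propagate this to all $n$, note that the empty partition has $|\emptyset|=0\leq 3g-3+n$ and that the side conditions $2\leq\lambda_i\leq n$ are vacuous, so the string relation \eqref{eq:string1} gives $C_{g,n+1}(\emptyset)=C_{g,n}(\emptyset)$ for every $n\geq 1$; iterating down to $n=1$ yields $C_{g,n}(\emptyset)=1$, hence $C_g(\emptyset)=C_{g,1}(\emptyset)=1$ for $g\geq 1$, while $C_0(\emptyset)=C_{0,3}(\emptyset)=1$ from the first step. Finally, the dilaton recursion \eqref{eq:Cgdilaton} specialised to $\lambda=\emptyset$ is precisely \eqref{eq:Cg2empty}, namely $C_g(2,\emptyset)=-g\,C_g(\emptyset)$, so $C_g(2,\emptyset)=-g$.

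The only genuinely non-formal input is the one-point evaluation $\cor{\tau_{3g-2}}_g=1/(24^g g!)$; everything else is bookkeeping with the string and dilaton equations. I expect this to be the main obstacle in a self-contained write-up, though it is classical: if one does not wish to cite it, it follows by induction on $g$ from the $m=1$ Virasoro constraint (equivalently from the KdV equation \eqref{KdV}) evaluated on the single correlator $\cor{\tau_{3g-2}}_g$, which is the one place where a real computation, as opposed to a formal manipulation, enters.
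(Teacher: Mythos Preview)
Your proof is correct and follows essentially the same route as the paper: both check $g=0,1$ directly from \eqref{eq:A0n}--\eqref{eq:A1n}, invoke the one-point evaluation $\tilde{A}_{g,1}=e_1^{3g-2}$ to get $C_{g,1}(\emptyset)=1$, propagate to all $n$ via the string relation \eqref{eq:string1}, and then read off $C_g(2,\emptyset)=-g$ from \eqref{eq:Cg2empty}. Your write-up is slightly more explicit about the one-point input $\cor{\tau_{3g-2}}_g=1/(24^g g!)$ and about why the empty partition satisfies the hypotheses of \eqref{eq:string1}, but there is no genuine difference in strategy.
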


\subsection{Compute \( A_{g,n} \) for small fixed \( g \)}
We now use corollary \ref{lem:poly} to compute, for fixed small \( g \),
\( C_{g}(\lambda) \) for all \( \lambda \).
We again denote by \( \mu \) the partition \( \lambda \) minus
its first row.  Recall that we require 
\( |\mu| \leq 3g-3 \) in order for \( C_{g}(\lambda) \) to 
be non-zero.
We can, for fixed \( g \) and \( \mu \), 
compute \( C_{g}(k,\mu) \) for the first few values of \( k \),
and use the polynomial structure discussed in corollary \ref{lem:poly},
then repeat this procedure for all \( \mu \in \mathcal{Q}_{g}\).

\subsubsection{\textit{The minimal amount of coefficients to determine all $C_g(\lambda)$}}
We have seen that
\( C_{g}(k,\mu) \) has a polynomial behaviour in $k$
starting at \( k_{0} = 3g-2-|\mu| + {\mu_{1}} - \delta_{|\mu|,3g-3}  \),
and the polynomial is of degree \( 3g-3-|\mu| \). 
Thus,  it is enough to compute the \( C_{g}(k_0,\mu) \) 
up to \( C_{g}(k_0 + (3g-3) - |\mu|) \) in order to fit the polynomial part of 
\( C_{g}(k, \mu) \) for all \( k \).
Since \( C_{g}(k,\mu)  \) appears first in
\( A_{g,k} \), this means we need to compute \( A_{g,n} \)
for sufficiently high \( n \),  namely \( A_{g,n_{\text{min}}(g)}  \) with
\begin{equation}
	n_{\text{min}}(g) = \max_{\mu \in \text{part}_{g}}  
	\{6g-5-2|\mu|+ {\mu_{1}} - \delta_{|\mu|,3g-3}\} .
\end{equation}
This maximum is obviously always achieved by the empty partition
\(\mu = \varnothing\) ,
therefore,
\begin{equation}
	n_{\text{min}}(g)  = 6g-3-\delta_{g,1}.
\end{equation}
To summarize, our method allows one to compute closed expressions of 
\( A_{g,n} \) for all \( n \) and fixed \( g \),  provided one is able to compute
\( A_{g,6g-3} \). 

Let us also mention that our main conjecture cannot
really help us in computing \( A_{g,n} \): while the conjecture restricts the set \( \mathcal{Q}_{g} \) of partitions to \( \mathcal{Q}^*_{g} \),  this restriction does not bound the polynomial degree required by our method.  On the contrary, the most demanding partition in term of polynomial degree is the partition of minimal length \( \mu = \varnothing \) (whereas the conjecture concerns long partitions).

\subsection{Verification of the conjecture for small \( g \)}
Let us discuss some of the consequences of corollary \ref{lem:poly}
on conjecture \ref{conj:main}.
Corollary \ref{lem:poly} applies to all partitions in \( \mathcal{Q}_{g} \)
regardless of their length, and in particular we can
compute \( C_{g}(k, \mu) \) for \( \mu \) with \( \ell(\mu) > g-1 \)
and show by the same polynomial argument that they are indeed zero for all \( k \),
proving the conjecture for fixed \( g \).
This is what we verify explicitly
in the formulas (\ref{eq:A2n}) and (\ref{eq:A3n}) for the genus 2 and 3 amplitudes
, and for \( g = 4 \) in appendix \ref{sec:Cgappendix}.

To check the conjecture for fixed \( g \), we do not
need to compute the entire \( n-\)point function \( A_{g,n} \),
but it is enough to apply the polynomial argument to \( C_{g}(k, \mu) \) 
for partitions in \( \mathcal{Q}_{g} \setminus \mathcal{Q}^*_{g} \).
This requires to know the \( n \) point function only
up to \( A_{g,n_{conj}(g)}  \) with
\begin{equation}
	n_{\text{conj}}(g) = \max_{\mu \in \mathcal{Q}_{g} \setminus 
	\mathcal{Q}_{g}^*}  
	\{6g-5-2|\mu|+ \mu_{1} - \delta_{|\mu|,3g-3}\} 
\end{equation}
This maximum is achieved by the partition \( (2)^{g} = (\overbrace{2 ,\ldots, 2}^{ g \text{ times}}) \) (and \(\mathcal{Q}_{g} \setminus \mathcal{Q}_{g}^*  \)
is empty for \( g \leq 2 \), so we can assume that \( g \geq 3 \)
and \( (2)^{g} \) is indeed present) and therefore we have
\begin{equation}
	n_{\text{conj}}(g)  = 2g-1.
\end{equation}
We can summarize this discussion as follows.
\begin{lemma}
	\label{lem:bound}
	For all \( g \geq 3 \), conjecture \ref{conj:main} holds for 
	all \( n \geq 1 \) if and only if it holds for \( n = 2g-1 \).
\end{lemma}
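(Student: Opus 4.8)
The statement is essentially a repackaging of the preceding discussion, so the plan is to assemble the pieces cleanly. The forward implication is immediate: if Conjecture \ref{conj:main} holds for every $n$, then in particular for $n=2g-1$. For the converse, recall that by the string equation the coefficient $C_{g,n}(\lambda)$ does not depend on $n$, so the Conjecture in genus $g$ is equivalent to the single assertion that $C_g(\lambda)=0$ for every partition $\lambda$ with $\ell(\lambda)>g$. I would then write $\lambda=(k)\sqcup\mu$ with $k=\lambda_1\geq\mu_1\geq 2$; Lemma \ref{lem:1} lets us assume $|\mu|\leq 3g-3$, and under this restriction the condition $\ell(\lambda)>g$ is precisely $\mu\in\mathcal{Q}_g\setminus\mathcal{Q}_g^{*}$. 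So the content of the converse is: if $C_g(k,\mu)=0$ for all $\mu\in\mathcal{Q}_g\setminus\mathcal{Q}_g^{*}$ and all $k$ with $\mu_1\leq k\leq 2g-1$, then $C_g(k,\mu)=0$ for all such $\mu$ and all $k\geq\mu_1$.

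The key step turns this infinite family of vanishings, for each fixed $\mu$, into finitely many. For that I would invoke Corollary \ref{lem:poly}: for $\mu\in\mathcal{Q}_g\setminus\mathcal{Q}_g^{*}$ there is a polynomial $Q_g^{\mu}$ of degree $3g-3-|\mu|$ with $C_g(k,\mu)=(k+|\mu|-g-1)!\,Q_g^{\mu}(k)$ for all $k$ past the threshold $k_0(\mu)=3g-2-|\mu|+\mu_1-\delta_{|\mu|,3g-3}$, the factorial being nonzero in that range. Hence $C_g(\,\cdot\,,\mu)$ vanishes identically on $\{k\geq\mu_1\}$ if and only if $Q_g^{\mu}\equiv 0$ together with the vanishing of the finitely many pre-threshold values $C_g(k,\mu)$ with $\mu_1\leq k<k_0(\mu)$; and $Q_g^{\mu}\equiv 0$ is forced by the vanishing of the $3g-2-|\mu|$ consecutive values $C_g(k_0(\mu),\mu),\ldots,C_g(k_0(\mu)+3g-3-|\mu|,\mu)$, since a polynomial of degree $3g-3-|\mu|$ with that many roots is zero. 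Since $C_g(k,\mu)$ equals the coefficient of $e_k e_\mu e_1^{3g-3+n-k-|\mu|}$ in $A_{g,n}$ for any $n\geq k$ and is independent of $n$, all the data needed to conclude $C_g(\,\cdot\,,\mu)\equiv 0$ is visible in $A_{g,N(\mu)}$ with $N(\mu)=k_0(\mu)+3g-3-|\mu|=6g-5-2|\mu|+\mu_1-\delta_{|\mu|,3g-3}$, i.e.\ in the amplitude entering the definition of $n_{\mathrm{conj}}(g)$.

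It then remains to bound $N(\mu)$ uniformly over $\mathcal{Q}_g\setminus\mathcal{Q}_g^{*}$, which is the optimization carried out just above the statement: every such $\mu$ has at least $g$ parts of size $\geq 2$, so $|\mu|\geq 2g$, and $N(\mu)$ is largest at the shortest, smallest such partition $\mu=(2)^g$ — more parts, or a larger first part, only lower $N(\mu)$ — which gives $n_{\mathrm{conj}}(g)=2g-1$. Combining the two paragraphs: assuming the Conjecture for $n=2g-1$, for every $\mu\in\mathcal{Q}_g\setminus\mathcal{Q}_g^{*}$ all coefficients $C_g(k,\mu)$ with $\mu_1\leq k\leq 2g-1$ — exactly the data certifying $C_g(\,\cdot\,,\mu)\equiv 0$ — occur in $A_{g,2g-1}$ attached to partitions of length $>g$, hence vanish; therefore $C_g(k,\mu)=0$ for all $k$ and all such $\mu$, the decomposition \eqref{eq:AgnInCG} of $A_{g,n}$ is supported on $\ell(\lambda)\leq g$ for every $n$, and the Conjecture holds for all $n$.

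I expect the main obstacle to be making the uniform bound airtight: one must rule out that some partition of $\mathcal{Q}_g\setminus\mathcal{Q}_g^{*}$ with many parts, or with a large first part, pushes $N(\mu)$ above $2g-1$, and one must carry the $\delta_{|\mu|,3g-3}$ correction — active precisely for maximal-size partitions, and in particular for the single partition $(2,2,2)$ when $g=3$, where $\mathcal{Q}_3\setminus\mathcal{Q}_3^{*}=\{(2,2,2)\}$ — correctly through the estimate. A minor secondary point is to confirm that each $C_g(k,\mu)$ used really does appear as a coefficient of $A_{g,2g-1}$, i.e.\ that $k\leq 2g-1$ and $k+|\mu|-(3g-3)\leq 2g-1$, which is where $|\mu|\leq 3g-3$ is invoked once more.
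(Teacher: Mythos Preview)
Your proof is correct and follows exactly the paper's own argument: the lemma is presented there as a summary of the preceding discussion, which uses the polynomial structure of Corollary~\ref{lem:poly} together with the maximization of $N(\mu)=6g-5-2|\mu|+\mu_1-\delta_{|\mu|,3g-3}$ over $\mathcal{Q}_g\setminus\mathcal{Q}_g^{*}$, attained at $\mu=(2)^g$. Your treatment is in fact slightly more careful than the paper's, since you make explicit the role of the pre-threshold values $\mu_1\le k<k_0(\mu)$ and the check that every relevant $C_g(k,\mu)$ actually occurs as a coefficient of $A_{g,2g-1}$.
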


Based on this result we were able to check the conjecture
for \( g \leq 7 \).

\subsection{Higher Virasoro constraints}
In the previous section we have seen how to transfer from string and from dilaton equations useful information on the coefficients $C_g(\lambda)$. It is a natural question to ask what value can higher Virasoro constraints provide. Let us first observe that the Virasoro algebra commutation relations imply that the algebra is generated by, for instance, the operators $\mathcal{L}_{-1}$ and $\mathcal{L}_{2}$, but not by $\mathcal{L}_{-1}$ and $\mathcal{L}_{0}$. There seem to be therefore something left to be captured. 

We have tried directly to derive for each given $m > 0$ a recursion for the $C_g(\lambda)$. However, the recursions arising this way do not seem to be very suitable for practical use. Instead, we are going to employ the result of Liu-Xu \cite{LiuXu_npoint} to restate our main conjecture in terms of the homogeneous polynomials $P_r$ and $S_r$.

\begin{corollary}
	\label{cor:mainconj}
Let $g \geq 0$ and $n \geq 1$ be integer numbers such that $2g - 2 + n > 0$. The following three statements are equivalent. 
\begin{enumerate}
\item Conjecture \ref{conj:main}:
\begin{equation*}
  A_{g,n}(\textbf{x}) 
	=
	\sum_{\substack{|\lambda| \leq 3g-3+n \\ \lambda_{i} \geq 2, \;\; l(\lambda) \leq g}}
	\!\!\!\! C_{g}(\lambda) \; e_{\lambda}  \; e_{1} ^{3g-3+n-|\lambda|}.   
\end{equation*}
\item
\begin{equation*}
  S_{g}(\textbf{x}) 
	=
	\sum_{\substack{|\lambda| \leq 3g-3+n \\ \lambda_{i} \geq 2, \;\; l(\lambda) \leq g}}
	\!\!\!\! C'_{g}(\lambda) \; e_{\lambda}  \; e_{1} ^{3g-3+n-|\lambda|}.   
\end{equation*}
\item
\begin{equation*}
  P_{g}(\textbf{x}) 
	=
	\sum_{\substack{|\lambda| \leq 3g-3+n \\ \lambda_{i} \geq 2, \;\; l(\lambda) \leq g}}
	\!\!\!\! C''_{g}(\lambda) \; e_{\lambda}  \; e_{1} ^{3g-3+n-|\lambda|}.   
\end{equation*}
\end{enumerate}
\begin{proof} It follows from the homogeneous degree of $P_r$ and $S_r$, together with the fact that in the formula for $F_n$ and $G_n$ only monomials in elementary symmetric polynomials of degree $3$ ($e_1e_2$ and $e_3$) appear.
\end{proof}
\end{corollary}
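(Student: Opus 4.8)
The plan is to turn the Liu--Xu formula (Theorem~\ref{thm:LiuXuFG}) into two explicit ``change of basis'' identities, one expressing $A_{g,n}$ through the $S_r$ and one through the $P_r$, and then to transport the bound $\ell(\lambda)\le g$ back and forth along these identities by induction on the genus, exploiting that each identity introduces only very controlled extra factors. To quantify ``number of factors $e_i$ with $i>1$'' I would put on the ring of symmetric polynomials $\mathbb{Q}[e_1,e_2,\dots]$ the grading with $\deg e_1=0$ and $\deg e_i=1$ for $i\ge 2$, and write $\nu(f)$ for the highest degree occurring in a symmetric polynomial $f$ in this grading. Since this is a graded polynomial ring, hence a domain, $\nu$ is submultiplicative, $\nu(f+h)\le\max(\nu(f),\nu(h))$, and $\nu(cf)=\nu(f)$ for a nonzero scalar $c$. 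As $A_{g,n}$, $S_g$, $P_g$ are homogeneous symmetric of ordinary degree $3g-3+n$, each is a combination of monomials $e_\lambda\,e_1^{\,3g-3+n-|\lambda|}$ with $\lambda_i\ge 2$, and such a monomial has $\nu=\ell(\lambda)$; so statements (1), (2), (3) read exactly $\nu(A_{g,n})\le g$, $\nu(S_g)\le g$, $\nu(P_g)\le g$.

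Concretely: first, since $F_n=\sum_g A_{g,n}$ is the homogeneous decomposition of the $n$-point function and the summand of Theorem~\ref{thm:LiuXuFG} indexed by $(g',r)$ is homogeneous of ordinary degree $3g'-3+n$, I would extract the degree-$(3g-3+n)$ part to obtain a triangular system $A_{g,n}=\sum_{r=0}^{g}c_{g,r,n}\,S_r\,e_1^{\,3(g-r)}$ with $c_{g,r,n}=\tfrac{(2r+n-3)!!}{12^{g-r}(2g+n-1)!!}$, and likewise from the normalized $n$-point function $\tilde A_{g,n}=\sum_{r=0}^{g}c'_{g,r,n}\,P_r\,(e_1e_2-e_3)^{\,g-r}$ with $c'_{g,r,n}=\tfrac{(2r+n-3)!!}{4^{g-r}(2g+n-1)!!}$; in the stable range $2g-2+n>0$ all these coefficients, in particular the diagonal ones $r=g$, are nonzero. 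Second, $\nu(e_1^{3(g-r)})=0$ while $\nu\big((e_1e_2-e_3)^{g-r}\big)\le g-r$ since $\nu(e_1e_2)=\nu(e_3)=1$. The easy directions are then immediate: if $\nu(S_r)\le r$ for all $r\le g$ then $\nu(A_{g,n})\le\max_r\nu(S_r)\le g$, and if $\nu(P_r)\le r$ for all $r\le g$ then $\nu(\tilde A_{g,n})\le\max_r\big(\nu(P_r)+(g-r)\big)\le g$, hence $\nu(A_{g,n})\le g$. For the converses I would induct on $g$, the base case $g=0$ being the direct computation $A_{0,n}=e_1^{n-3}$, whence $S_0$ and $P_0$ are scalar multiples of $e_1^{n-3}$: solving the first system for $S_g$ writes it, up to a nonzero scalar, as $A_{g,n}$ minus a combination of the $S_r e_1^{3(g-r)}$ with $r<g$, so $\nu(A_{g,n})\le g$ together with $\nu(S_r)\le r$ for $r<g$ gives $\nu(S_g)\le g$; the same argument with $e_1e_2-e_3$ in place of $e_1$ gives $\nu(P_g)\le g$. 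This closes the loop (1)$\Leftrightarrow$(2)$\Leftrightarrow$(3).

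The one part of this that is not pure bookkeeping is the reliability of the first step: I would have to verify that the double-factorial coefficients are genuinely nonzero throughout the stable range (treating low cases such as $n=1$ and the unstable conventions $(0,1),(0,2)$ by hand), and that the coefficients $C'_g(\lambda)$ and $C''_g(\lambda)$ occurring in statements (2)--(3) are, like $C_g(\lambda)$, independent of $n$ — which I would obtain by running the string-equation argument of Section~\ref{sec:virasoro} on $S_g$ and $P_g$ rather than on $A_{g,n}$, using that both are built from the $A_{g,n}$ by operations compatible with the substitution $x_{n+1}=0$ — so that the per-$n$ identities assemble into the single statements (2) and (3). Everything else, namely the elementary properties of the filtration $\nu$ and the inductive transfer, is routine, and that is precisely why the equivalence is merely a corollary of the Liu--Xu formula.
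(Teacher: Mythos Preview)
Your approach is the one the paper's terse proof has in mind, just made explicit: read the Liu--Xu formula as a triangular change of basis and transport the bound $\ell(\lambda)\le g$ along it by induction on the genus, using that the transition coefficients are built out of degree-$3$ monomials $e_1^3$, $e_1e_2$, $e_3$ whose $\nu$-degree is at most $1$. Your treatment of $(1)\Leftrightarrow(2)$ is correct and matches the paper.

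There is one genuine slip in $(1)\Leftrightarrow(3)$. The object you denote $\tilde A_{g,n}$ and set equal to $\sum_{r}c'_{g,r,n}P_r(e_1e_2-e_3)^{g-r}$ is \emph{not} the paper's $\tilde A_{g,n}=24^g g!\,A_{g,n}$: the homogeneous degree-$(3g-3+n)$ piece of the normalized $n$-point function $G_n=e^{-p_3/24}F_n$ is what Theorem~\ref{thm:LiuXuFG} calls $B_{g,n}$, and $B_{g,n}=\sum_{k\ge 0}\tfrac{(-p_3/24)^k}{k!}A_{g-k,n}\neq 24^g g!\,A_{g,n}$. So as written your argument only gives $\nu(B_{g,n})\le g \Leftrightarrow \nu(P_{g'})\le g'$ for $g'\le g$, not the statement for $A_{g,n}$. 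The fix is immediate and uses exactly your machinery: since $p_3=e_1^3-3e_1e_2+3e_3$ has $\nu(p_3)=1$, the triangular relation $A_{g,n}\leftrightarrow B_{g,n}$ also raises $\nu$ by at most $g-r$ in each term, so the same induction yields $\nu(A_{g',n})\le g'$ for all $g'\le g$ iff $\nu(B_{g',n})\le g'$ for all $g'\le g$, closing the chain. Your last paragraph's worry about the $n$-independence of $C'_g$, $C''_g$ is not required for the corollary and can be dropped.
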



\section{Verifying the conjecture for small $n$}\label{sec:n123}
This section contains the proof of the conjecture for $n=1,2,3$.
\subsection{The case $n$ = 1} 
\label{sec:n1}
A closed formula for the $1$-point function $F_1(x)$ was computed by Witten 
\cite{W91} and reads
\begin{equation}
F_1(x) = \frac{e^{\frac{x^3}{24}} }{x^2}= \sum_{g=0}^{\infty} \frac{x^{3g - 2}}{24^g g!}.
\end{equation}
Since $x = e_1(x)$ and $e_i = 0$ for $i>1$, we trivially obtain:
\begin{lemma} Conjecture \ref{conj:main} holds for $n=1$ and any $g \geq 1$. More precisely we have:
	\begin{equation}
A_{g,1}(x) = C_{g}(\emptyset) e_1^{3g - 2}, \qquad \qquad {C}_{g}(\emptyset) = \frac{1}{24^g g!}.
\end{equation}
\end{lemma}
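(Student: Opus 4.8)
The plan is to read everything off Witten's closed formula for the one-point function, which is already recorded just above. First I would Taylor-expand $F_1(x) = e^{x^3/24} x^{-2}$ in $x$, obtaining $F_1(x) = \sum_{g \geq 0} x^{3g-2}/(24^g g!)$. By the grading recalled in Section \ref{sec:BuryakOkounkov}, the homogeneous component of $F_1$ of cohomological degree $3g-3+n = 3g-2$ is exactly $A_{g,1}(x)$; since $2g-2+n > 0$ with $n=1$ forces $g \geq 1$, the term $x^{-2}$ (degree $-2$) is just the unstable $(0,1)$ convention and plays no role. Thus $A_{g,1}(x) = x^{3g-2}/(24^g g!)$ for every $g \geq 1$.

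Next I would use the elementary fact that in a single variable $x_1 = x$ one has $e_1(x) = x$ and $e_i(x) = 0$ for all $i \geq 2$. Hence in the decomposition \eqref{eq:AgnInCgn} every monomial $e_{\lambda} e_1^{3g-2-|\lambda|}$ with $\lambda \neq \emptyset$ vanishes upon restriction to $n=1$, so only the $\lambda = \emptyset$ term survives. Comparing with $A_{g,1}(x) = x^{3g-2}/(24^g g!) = e_1^{3g-2}/(24^g g!)$ forces $C_{g,1}(\emptyset) = 1/(24^g g!)$, and since $C_g(\emptyset)$ was defined (for $g \geq 1$) as $C_{g,1}(\emptyset)$, we get the claimed value; this matches $\tilde A_{g,1} = 24^g g!\, A_{g,1} = e_1^{3g-2}$, i.e.\ $C_g(\emptyset) = 1$ in the normalized convention.

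Finally, Conjecture \ref{conj:main} for $n=1$ is then immediate: the only partition contributing to $A_{g,1}$ is $\lambda = \emptyset$, whose length $0$ satisfies $\ell(\emptyset) \leq g$, so there is nothing with $\ell(\lambda) > g$ to kill — indeed nothing with $\ell(\lambda) \geq 1$ can appear at all, the elementary symmetric basis in one variable collapsing to powers of $e_1$. There is no real obstacle here; the one-pointed case is degenerate precisely because of this collapse, and the only step worth stating carefully is the bookkeeping that the homogeneous piece of $F_1$ of the correct degree is genuinely $A_{g,1}$ and not a mixture of genera, which follows at once from the grading $g = (D+3-n)/3$.
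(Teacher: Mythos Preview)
Your argument is correct and follows essentially the same route as the paper: expand Witten's one-point function $F_1(x)=e^{x^3/24}/x^2$, extract the degree $3g-2$ piece as $A_{g,1}$, and then observe that in one variable $e_1=x$ while $e_i=0$ for $i\geq 2$, so only $\lambda=\emptyset$ survives. The paper compresses all of this into the sentence ``Since $x = e_1(x)$ and $e_i = 0$ for $i>1$, we trivially obtain''; your write-up simply unpacks that triviality.
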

\noindent
Note that the case $(g,n) = (0,1)$ is discarded as it is unstable, and corresponds to the unstable integral $$
\int_{\MMMbar_{0,1}} \frac{1}{1 - x \psi_1} = \frac{1}{x^2}.$$
\subsection{The case $n$ = 2} 
A closed formula for the $2$-point function $F_2(x_1, x_2)$ was computed by Dijkgraaf and reads in terms of the amplitudes
\begin{equation}
	F_{2}(x_{1} , x_{2})
	=
	e^\frac{p_3}{24}
	\frac{1}{e_1}
	\sum_{k \geq 0}
	\frac{k!}{2^k (2k+1)!} e_2^k e_1^k 
\end{equation}
Let us verify the main conjecture \ref{conj:main} by expanding $F_2$ purely in elementary symmetric polynomials.
\begin{proposition}  Conjecture \ref{conj:main} holds for $n=2$ and any $g \geq 1$. More precisely we have:
	\begin{equation}
		A_{g,2}(\textbf{x})= \sum_{m = 0}^{g} C_{g}((2)^m)
		e_{2}^{m} e_{1}^{3g-1-2m}, \qquad \qquad C_{g}((2)^m) = \frac{1}{24^g g!} \binom{g}{m} \frac{(-3)^m}{(2m + 1)}.
 \end{equation}
\begin{proof}
Let us first express the power sum in terms of the $e_i$. One has \( p_3 = e_{1}^{3}
- 3 e_{1} e_{2}\), as $e_3$ vanishes in two variables. Substituting we obtain
\begin{align}
	F_{2}(p_{1} , p_{2})
	&=
	\sum_{l} \frac{(e_{1}^{3} - 3 e_{1} e_{2})^{l}}{l!}
	\frac{1}{24^{l}}
	e_{1}^{-1}
	\sum_{k} \frac{k!}{(2k+1)!} \frac{1}{2^{k}} e_{1}^{k} e_{2}^{k}
	\\
	&=
	\sum_{k} \sum_{l} \sum_{j = 0}^{l}
	\frac{1}{l! 24^{l}} (-3)^{j}
	\binom{l}{j}
	\frac{k!}{(2k+1)!}
	2^{-k}
	e_{1}^{3l-1-2j+k} e_{2}^{j+k}
\end{align}
Now let us fix the degree to be \( 3l+3k-1 = 3g-1  \). Hence
\( g = l+k \), \( l = g-k \), and we have
\begin{align}
	A_{g,2} 	&=
	\sum_{k} \sum_{j = 0}^{g-k}
	\frac{24^{k}}{(g-k)! 24^{g}} (-3)^{j}
	{\binom{g-k}{j}}
	\frac{k!}{(2k+1)!}
	2^{-k}
	e_{1}^{3g-1-2j-2k} e_{2}^{j+k}.
\end{align}
Setting \( j+k = m \), with \( m = 0,...,g \) and \( k = 0 ,...,m \) we get
\begin{align}
	A_{g,2} 	&=
	\sum_{m} \sum_{k = 0}^{m}
	\frac{24^{k}}{(g-k)!
	24^{g}} (-3)^{m-k}
	{\binom{g-k}{m-k}}
	\frac{k!}{(2k+1)!}
	2^{-k}
	e_{1}^{3g-1-2m} e_{2}^{m}
	\\
			&=
	\sum_{m} \sum_{k = 0}^{m}
	\frac{1}{24^{g} g!} (-3)^{m}  {\binom{g}{m}}
	(-4)^k \frac{m!}{(m-k)!}  \frac{k!}{(2k+1)!}
	e_{1}^{3g-1-2m} e_{2}^{m}
\end{align}
Therefore we have
\begin{equation}
	C_{g}((2)^{m}) =
	\sum_{k = 0}^{m}
	\frac{1}{24^{g} g!} (-3)^{m}  {\binom{g}{m}}
	(-4)^k \frac{m!}{(m-k)!}  \frac{k!}{(2k+1)!}.
\end{equation}
It simply remains to show the combinatorial identity
\begin{equation}
	\frac{1}{2m+1} =
	\sum_{k = 0}^{m}
	(-4)^k \frac{m!}{(m-k)!}  \frac{k!}{(2k+1)!}.
\end{equation}
In fact this identity is a particular case  for $2r+n = 2$ of the one proved in \cite[Lemma 2.6]{LiuXu_npoint}. This concludes the proof of the proposition.
\end{proof}
\end{proposition}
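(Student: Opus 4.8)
The plan is to start from Dijkgraaf's closed formula for the two-point function,
$$
F_2(x_1,x_2)=e^{p_3/24}\,\frac{1}{e_1}\sum_{k\ge0}\frac{k!}{2^k(2k+1)!}\,e_2^k e_1^k,
$$
and to rewrite everything purely in terms of the two elementary symmetric polynomials $e_1$ and $e_2$. Since $e_i=0$ for $i\ge3$ in two variables, the relevant power sum collapses to $p_3=e_1^3-3e_1e_2$. Expanding $e^{p_3/24}$ as a series in an index $\ell$ and applying the binomial theorem to $(e_1^3-3e_1e_2)^\ell$ produces a triple sum indexed by $k$ (from the Dijkgraaf series), $\ell$ (from the exponential) and $j=0,\dots,\ell$ (from the binomial expansion), each summand being a constant times $e_1^{3\ell-1-2j+k}e_2^{j+k}$.

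The next step is to isolate $A_{g,2}$ as the homogeneous component of degree $3g-1$. Since the monomial above has total degree $3\ell+3k-1$, this forces $\ell=g-k$ and removes one summation. Re-indexing by the exponent of $e_2$, i.e.\ setting $m=j+k$ so that $0\le k\le m\le g$, and carrying out the elementary factorial bookkeeping (in particular $24^k\cdot 2^{-k}\cdot(-3)^{-k}=(-4)^k$ and $\binom{g-k}{m-k}/(g-k)!=\binom{g}{m}m!/\big(g!\,(m-k)!\big)$), one arrives at
$$
A_{g,2}=\sum_{m=0}^g\Bigg(\frac{(-3)^m}{24^g g!}\binom{g}{m}\sum_{k=0}^m(-4)^k\frac{m!}{(m-k)!}\frac{k!}{(2k+1)!}\Bigg)e_2^m e_1^{3g-1-2m}.
$$
In particular only powers of $e_1$ and $e_2$ appear, and the coefficient of $e_2^m$ vanishes for $m>g$ because of the factor $\binom{g}{m}$; since the partition $\lambda=(2)^m$ has length $m\le g$, this already establishes Conjecture \ref{conj:main} in the case $n=2$.

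It then remains to evaluate the inner sum, that is to prove the combinatorial identity
$$
\frac{1}{2m+1}=\sum_{k=0}^m(-4)^k\frac{m!}{(m-k)!}\frac{k!}{(2k+1)!}.
$$
This is the only genuinely non-formal step, and I expect it to be the main obstacle. One option is to recognise the right-hand side as a terminating hypergeometric sum summable in closed form by a classical theorem; a second is to establish a first-order recurrence in $m$ (for instance by creative telescoping / the Gosper--Zeilberger algorithm) and conclude by induction with base case $m=0$; and a third, available to us here, is to invoke it directly as the special case $2r+n=2$ of \cite[Lemma 2.6]{LiuXu_npoint}. Feeding this evaluation back into the displayed formula yields $C_g((2)^m)=\frac{1}{24^g g!}\binom{g}{m}\frac{(-3)^m}{2m+1}$, which completes the proof.
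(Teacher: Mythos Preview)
Your proposal is correct and follows essentially the same approach as the paper: both start from Dijkgraaf's formula, use $p_3=e_1^3-3e_1e_2$, expand the exponential and binomial, extract the degree-$(3g-1)$ part via $\ell=g-k$, reindex by $m=j+k$, and reduce to the same combinatorial identity, which is then cited as the special case $2r+n=2$ of \cite[Lemma~2.6]{LiuXu_npoint}.
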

\noindent
Note that the case $(g,n) = (0,2)$ is discarded as it is unstable.
\begin{remark}
Let us see how one could have recovered Dijkgraaf formula directly from \cref{thm:LiuXuFG}: since $F_1(x) = e^{x^3/24}/x^2$, then $G_1(x) = x^{-2}$, hence $B_{r,1}(x) = \delta_{r,0}/x^2$. We compute
$$
P_{r}(x_1, x_2) = \frac{1}{2e_1} 2x_1^2x_2^2 \frac{\delta_{r,0}}{x_1^2x_2^2} = \frac{\delta_{r,0}}{e_1}
$$
It is then enough to substitute this into the formula for $G_2 = e^{-p_3/24}F_2$ and solving for $F_2$ converting the double factorials into factorials.
\end{remark}

\subsection{The case $n$ = 3} 
The following formula for $P_{r}(x_{1} , x_{2} , x_{3}) $ was initially given by Zagier \cite{Zagier3point}. A proof can be found in \cite{LiuXu_npoint}.
\begin{align*}
	&P_{r}(x_{1} , x_{2} , x_{3}) =
	\\
	&= \frac{1}{4^{g} (2g+1)!!} 
	\frac{1}{e_{1}} \left( (x_{1} + x_{2} )^{r+1} (x_{1}x_{2} )^{r} 
	+(x_{2} + x_{3} )^{r+1} (x_{2}x_{3} )^{r}
	+(x_{1} + x_{3} )^{r+1} (x_{1}x_{3} )^{r}\right).
	\label{eq:P3}
\end{align*}

\begin{proposition} Conjecture \ref{conj:main} holds for $n=3$ and any $g \geq 0$. More precisely we have:
\begin{multline}
	4^{-r}(2r + 1)!! \; P_{r}(x,y,z) 
	=
	\\
	=
	-(3r+2) (-1)^{r+1} e_{3}^{r}
	+\sum_{k = 1}^{r}  \underset{t = 0}{\operatorname{Res}} 
	\frac{dt}{t^{r+1}} \frac{e_{3}^{r-k}}{k} 
	\left( r + \frac{t}{e_{1}} \right) \left( e_{1} - t\right)^{r} 
	\left( te_{2} - t^2 e_{1} + t^3 \right)^{k}
\end{multline}
\end{proposition}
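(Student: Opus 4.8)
\emph{Strategy.} The plan is to take Zagier's closed form for $P_r(x_1,x_2,x_3)$ and turn the cyclic sum into a residue by the global residue theorem on $\mathbb{P}^1$. First, using $x_i+x_j=e_1-x_k$ and $x_ix_j=e_3/x_k$ (where $x_k$ is the third variable), each summand becomes $(e_1-x_k)^{r+1}e_3^{\,r}x_k^{-r}$, so that
\[
\sum_{\mathrm{cyc}}(x_i+x_j)^{r+1}(x_ix_j)^{r}\;=\;e_3^{\,r}\sum_{w\,:\,P(w)=0}\frac{(e_1-w)^{r+1}}{w^{r}},
\]
where $P(t):=t^3-e_1t^2+e_2t-e_3$ is the characteristic cubic of $(x_1,x_2,x_3)$. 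Note $P(t)=R(t)-e_3$ with $R(t):=te_2-t^2e_1+t^3=t(t^2-e_1t+e_2)$, the polynomial appearing in the statement, and $R(0)=0$. Since, by Zagier's formula, the left-hand side of the proposition equals $e_1^{-1}$ times the cyclic sum, it suffices to identify $e_1^{-1}e_3^{\,r}\sum_w(e_1-w)^{r+1}w^{-r}$ with the right-hand side of the statement.

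\emph{Global residue theorem.} Assume first that $x_1,x_2,x_3$ are pairwise distinct and $e_3\neq0$; the general case then follows by polynomiality. Writing $f(t):=(e_1-t)^{r+1}t^{-r}$, which is regular away from $0$, simplicity of the roots gives $\sum_{w}f(w)=\sum_w\operatorname{Res}_{t=w}(fP'/P)\,dt$. The rational $1$-form $(fP'/P)\,dt$ on $\mathbb{P}^1$ has poles only at $0$, at the roots of $P$, and at $\infty$, so the residue theorem yields $\sum_{w}f(w)=-\operatorname{Res}_{t=0}(fP'/P)\,dt-\operatorname{Res}_{t=\infty}(fP'/P)\,dt$. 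For the term at infinity, expand $f(t)=(-1)^{r+1}\bigl(t-(r+1)e_1+O(t^{-1})\bigr)$ and $P'/P=\sum_i(t-x_i)^{-1}=3t^{-1}+e_1t^{-2}+O(t^{-3})$; the coefficient of $t^{-1}$ in the product equals $(-1)^{r+1}e_1\bigl(1-3(r+1)\bigr)=-(3r+2)(-1)^{r+1}e_1$, so $\operatorname{Res}_{t=\infty}(fP'/P)\,dt=(3r+2)(-1)^{r+1}e_1$. Carrying the prefactor $e_3^{\,r}/e_1$ through, this contributes precisely the standalone term $-(3r+2)(-1)^{r+1}e_3^{\,r}$.

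\emph{The residue at the origin.} Near $t=0$ one has $\log P(t)=\log(-e_3)-\sum_{k\ge1}R(t)^k/(k\,e_3^{\,k})$, a genuine power series in $t$. Since the residue of an exact form vanishes, $\operatorname{Res}_{t=0}f\,d\log P=-\operatorname{Res}_{t=0}(\log P)\,df$, and the constant $\log(-e_3)$ drops out, leaving $\operatorname{Res}_{t=0}(fP'/P)\,dt=\sum_{k\ge1}(k\,e_3^{\,k})^{-1}\operatorname{Res}_{t=0}f'(t)R(t)^k\,dt$. A one-line computation gives $f'(t)=-e_1(e_1-t)^{r}\bigl(r+t/e_1\bigr)t^{-r-1}$, and since $R(t)^k/t^{r+1}=(t^2-e_1t+e_2)^k/t^{\,r+1-k}$ the $k$-th term vanishes once $k>r$. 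Substituting and carrying the prefactor $-e_3^{\,r}/e_1$ through turns this into exactly $\sum_{k=1}^{r}\operatorname{Res}_{t=0}\tfrac{dt}{t^{r+1}}\tfrac{e_3^{\,r-k}}{k}(r+t/e_1)(e_1-t)^{r}R(t)^{k}$, the summation in the statement; together with the term at infinity this proves the formula. To get the conjecture, expand the right-hand side in $e_1,e_2,e_3$: $R(t)^k=t^k(t^2-e_1t+e_2)^k$ contributes at most $k$ factors $e_2$ and $e_3^{\,r-k}$ contributes $r-k$ factors $e_3$, so every monomial $e_{\lambda}e_1^{3g-3+n-|\lambda|}$ has $\ell(\lambda)\le r=g$; a short degree count shows the apparent $1/e_1$ always cancels, so the expression is polynomial. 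By Corollary \ref{cor:mainconj} this is Conjecture \ref{conj:main} for $n=3$, and since all $P_r$ are obtained at once it holds for every $g\ge0$.

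\emph{Main obstacle.} The heart of the argument is the integration-by-parts step at the origin: it simultaneously shifts the summation range to $\{1,\dots,r\}$ and produces the clean factor $r+t/e_1$, and it must be carried out keeping track of the fact that $\log P(t)$ is a power series only after its constant term is removed. The residue-at-infinity book-keeping (extracting the coefficient $3r+2$ and the sign $(-1)^{r+1}$) and the reduction from distinct $x_i$ to the general case by Zariski density are the remaining delicate points; matching the normalization $4^{-r}(2r+1)!!$ against Zagier's scalar is then routine.
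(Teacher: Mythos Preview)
Your proof is correct and takes a genuinely different route from the paper's.

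The paper proceeds by binomial-expanding $(x_i+x_j)^{r+1}=(e_1-x_k)^{r+1}$ to reduce the cyclic sum to a combination of power sums $p_m(1/x,1/y,1/z)$; these are then expressed as residues via the Taylor expansion of $\log\bigl(\pi(t)/e_3\bigr)$ with $\pi(t)=(x-t)(y-t)(z-t)$, and finally the binomial sum over $j$ is repacked by hand into the closed factor $(r+t/e_1)(e_1-t)^r$. You instead recognise the cyclic sum as $\sum_{P(w)=0}f(w)$ for $f(t)=(e_1-t)^{r+1}t^{-r}$ and apply the global residue theorem to $f\,d\log P$ on $\mathbb{P}^1$: the residue at infinity produces the standalone term $-(3r+2)(-1)^{r+1}e_3^{\,r}$, while integration by parts at the origin turns $\operatorname{Res}_0 f\,d\log P$ into $-\operatorname{Res}_0(\log P)\,df$, after which the same logarithmic expansion $\log P=\log(-e_3)-\sum_{k\ge1}R^k/(k\,e_3^{\,k})$ yields the sum over $k$, and the factor $(r+t/e_1)(e_1-t)^r$ appears automatically as $-e_1^{-1}f'(t)t^{r+1}$.

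Both arguments hinge on the same logarithmic expansion, but your global-residue/integration-by-parts packaging is more conceptual: it explains at once why the sum truncates at $k=r$, why the combination $(r+t/e_1)(e_1-t)^r$ arises (as a derivative rather than a repacked binomial identity), and where the constant $3r+2$ comes from (the residue at $\infty$). The paper's approach is more elementary in that it never leaves explicit series manipulations. Your observation that $R(t)^k$ carries at most $k$ factors of $e_2$ while $e_3^{\,r-k}$ carries $r-k$ factors of $e_3$, giving $\ell(\lambda)\le r$ in every monomial, is exactly the paper's reason for invoking Corollary~\ref{cor:mainconj}, and your Zariski-density reduction from distinct, nonzero $x_i$ to the general case is a legitimate way to justify the residue manipulation.
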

We are going to prove the residue formula. Notice that it implies the conjecture for $n=3$ through \cref{cor:mainconj}, as in each summand we have at most $r-k$ factors of $e_3$ and at most $k$ factors of $e_2$.

\begin{proof}
Let us consider the normalized version $T_r$ of $P_r$:
\begin{align}
	T_{r}(x,y,z) &= \frac{1}{e_{1}} 
	\left( (x+y)^{r+1} (xy)^{r} + (x+z)^{r+1} (xz)^{r} + (y+z)^{r+1} (yz)^{r}  \right)
	\\
		     &= 
		     \frac{1}{e_{1}} \left( (xy)^{r} \sum_{j = 0}^{r+1} e_{1}^{r+1-j} (-1)^{j} z^{j}
		     \binom{r+1}{j} + \text{sym} \right) 
		     \\
		     &= 
		     (-1)^{r+1} e_{3}^{r}
		     +\left( (xy)^{r} \sum_{j = 0}^{r} e_{1}^{r-j} (-1)^{j} z^{j}
		     \binom{r+1}{j} + \text{sym} \right) 
\end{align}
\begin{align}
	\iff T_{r}(x,y,z) + (-1)^{r} e_{3}^{r} &=
		   \sum_{j = 0}^{r} e_{1}^{r-j} (-1)^{j} z^{j}(xy)^{r} 
		     \binom{r+1}{j} + \text{sym}
		     \\
					       &=
					       e_{3}^{r} 
		   \sum_{j = 0}^{r} e_{1}^{r-j} (-1)^{j}
		   \binom{r+1}{j} \left( \frac{1}{x^{r-j}} + \frac{1}{y^{r-j}} + 
		   \frac{1}{z^{r-j}}\right) 
\end{align}
We want to describe this \( p_{r-j}(1/x,1/y,1/z) \) factor.
Let,
\begin{align}
	\pi(t) &= (x-t)(y-t)(z-t)
	\\
	       &= e_{3} - t e_{2} + t^2 e_{1} - t^{3}
\end{align}
Consider the development at small \( t \),
\begin{align}
\log(\frac{\pi(t)}{e_{3} }) &= 
	\log(1- \frac{t}{x}) + 
	\log(1- \frac{t}{y}) + 
	\log(1- \frac{t}{z})
	\\
				   &= -\sum_{k \geq 1} \frac{1}{k} t^{k} 
				   \left( \frac{1}{x^{k}} + \frac{1}{y^{k}} + \frac{1}{z^{k}} \right) 
				   \end{align}
				   \begin{align}
				   \iff -m\,\underset{t = 0}{\operatorname{Res}}  
				   \frac{dt}{t^{m+1}} 
			\log(\frac{\pi(t)}{e_{3}})
			&= p_{m}(1/x,1/y,1/z) 
\end{align}
On the other hand,
\begin{align}
\log(\frac{\pi(t)}{e_{3} }) &= 
\log(1 - \frac{1}{e_{3}} (t e_{2} - t^2 e_{1} + t^{3})) \\
			    &=
			    -\sum_{k \geq 1} \frac{1}{k} e_{3}^{-k} 
			    (t e_{2} - t^2 e_{1} + t^3 )^{k} 
\end{align}
Therefore,
\begin{align}
	p_{r-j}(1/x,1/y,1/z) 
	&= 
	\sum_{k \geq 1} \underset{t = 0}{\operatorname{Res}} 
	\frac{dt}{t^{r-j+1}} \frac{r-j}{k} e_{3}^{-k} \left( te_{2} - t^2 e_{1} + t^3 \right)^{k}
	\\
	&= 
	\sum_{k = 1}^{r}  \underset{t = 0}{\operatorname{Res}} 
	\frac{dt}{t^{r-j+1}} \frac{r-j}{k} e_{3}^{-k} \left( t e_{2} - t^2 e_{1} + t^3 \right)^{k}
\end{align}
Please note that this is only valid for \( r-j \geq 1 \). We easily
compute \( p_{0}(1/x,1/y,1/z) = 3 \).
We find
\begin{multline}
	T_{r}(x,y,z) + (-1)^{r} e_{3}^{r} \\
					       =
	\sum_{k = 1}^{r}  \underset{t = 0}{\operatorname{Res}} 
	\frac{dt}{t^{r+1}} \frac{e_{3}^{r-k}}{k} 
	\left( te_{2} - t^2 e_{1} + t^3 \right)^{k}
	\left(
		   \sum_{j = 0}^{r-1} 
		   e_{1}^{r-j} (-t)^{j}
		   \frac{(r+1)!}{(r+1-j)! j!}
		   (r-j)
	   \right)
					     \\\hspace{10pt}+ 3 e_{3}^{r} (-1)^{r} (r+1)
\end{multline}
Remains to repack the sum over \( j \),
\begin{align}
		   &\sum_{j = 0}^{r} 
		   e_{1}^{r-j} (-t)^{j}
		   \frac{(r+1)!}{(r+1-j)! j!}
		   (r-j)
		   \\
		   &=
		   \sum_{j = 0}^{r} 
		   e_{1}^{r-j} (-t)^{j}
		   \left(
			(r+1)\frac{r!}{(r-j)! j!}
			-\frac{(r+1)!}{(r+1-j)! j!}
		   \right)
		 \\&=(r+1)(e_{1} - t)^{r} - \frac{1}{e_{1}} \left((e_{1} -t)^{r+1} - (-1)^{r+1} t^{r+1} \right)
		 \\
		   &= \left( r + \frac{t}{e_{1}} \right) \left( e_{1} - t\right)^{r} 
		   + (-1)^{r} t^{r+1}
\end{align}
Notice that he second term does not contribute in the residue. This concludes the proof of the proposition.
\end{proof}


\section{An equivalent conjecture in terms of $\Omega$ classes}\label{sec:omega}

$\Omega$ classes can be thought of as a collection $\Omega$ of cohomology classes which play a central role
in the moduli spaces of curves: they are involved in several ELSV formulae arising in the context of
Hurwitz theory \cite{LPSZ, Lewa, DL}, in the enumeration of Masur-Veech volumes
\cite{MVchen}, in the double ramification cycle \cite{JPPZ}, in the context of topological
recursion for the $2$-KP and the $2$-BKP hierarchy \cite{GKL}, Witten's class, and more. They can be defined as certain cohomological classes $\Omega_{g,n}(r,s; a_1, \dots, a_n)
\in H^*(\MMMbar_{g,n})$, arising from $r$ dimensional cohomological field
theories, which generalize the Chern class of the Hodge bundle in the context
of $r$-spin curves with a corresponding generalized Mumford formula for the
Chern characters. Their polynomiality in the parameters $r$, $s$ and $a_i$
remains in general a mystery. In this section we aim to partially shed light in
this direction: we use a result of Goulden-Jackson-Vakil together with a ELSV
formula of \cite{DL} to transfer conjecture \ref{conj:main} to a statement
involving the polynomiality of the \text{intersection} of $\Omega$-classes in its
parameters when a specific relation among the parameters holds.

\subsection{$\Omega$-classes}

For $2g - 2 +n > 0$, consider a non-singular marked curve 
$$
(C; p_1, \ldots, p_n) \in \cM_{g,n}
$$
 and let $\Klog = \omega_C(\sum p_i)$ be its log canonical bundle. Fix a positive integer $r$, and let $1 \leq s \leq r$ and $1 \leq a_1, \ldots, a_n \leq r$ be integers satisfying the equation
\[
a_1 + a_2 + \cdots + a_n \equiv (2g-2+n)s \pmod{r}.
\]
This condition guarantees the existence of a line bundle over $C$ whose $r$th
tensor power is isomorphic to $\Klog^{\otimes s}(-\sum a_i p_i)$. Varying the
underlying curve and the choice of such an $r$th tensor root yields a moduli
space with a natural compactification $\oM_{g;a_1, \ldots, a_n}^{r,s}$ (see e.g. \cite{jarvis}). 
These works also include constructions of the universal curve $\pi :
\overline{\mathcal C}_{g;a_1, \ldots, a_n}^{r,s} \to \oM_{g; a_1, \ldots,
a_n}^{r,s}$ and the universal $r$th root $\cL \to \overline{\mathcal C}_{g;
a_1, \ldots, a_n}^{r,s}$. One can define psi-classes and kappa-classes in complete analogy with the case of moduli spaces of stable curves. 
The Chern characters of the derived pushforward $\mathrm{ch}_k(R^*\pi_*{\mathcal L})$ are given by \cite{chi2}
\begin{align} \label{eq:omegaformula}
\mathrm{ch}_k(r, s; a_1, \ldots, a_n) :={}& \frac{B_{k+1}(s/r)}{(k+1)!} \kappa_k - \sum_{i=1}^n \frac{B_{k+1}(a_i/r)}{(k+1)!} \psi_i^k \notag \\
&+ \frac{r}{2} \sum_{a=0}^{r-1} \frac{B_{k+1}(a/r)}{(k+1)!} {j_a}_* \frac{(\psi')^k + (-1)^{k-1}(\psi'')^k}{\psi'+\psi''}. 
\end{align}
Here, $B_m(x)$ denotes the Bernoulli polynomial, $j_a$ is the boundary morphism that represents the boundary divisor with multiplicity index $a$ at one of the two branches of the corresponding node, and $\psi',\psi''$ are the $\psi$-classes at the two branches of the node.

We define
\[
\Omega_{g,n}^{[x]}(r, s; a_1, \ldots, a_n) := \epsilon_{*} \exp \bigg[ \sum_{k=1}^\infty (-x)^k (k-1)! \, \ch_k(r, s; a_1, \ldots, a_n) \bigg]
\]
for the pushforward of the virtual total Chern class on the moduli space of stable curve via the natural forgetful morphism
\[
\epsilon: \overline{\mathcal{M}}^{r,s}_{g; a_1, \ldots, a_n} \to \overline{\mathcal{M}}_{g,n},
\]
which forgets the line bundle, otherwise known as the spin structure. Moreover $\Omega$ is enriched with an extra parameter $x$ which tracks the cohomological degree of the class.

\subsection{Goulden-Jackson-Vakil ELSV formula}
We now recall the polynomiality result of Goulden-Jackson-Vakil \cite{GJV} for one-part Hurwitz numbers, their conjectural ELSV formula, and the solution found in \cite{DL}.

\noindent
Let $g \geq 0$ and let $x_i$ be positive integers. Let us consider the one-part simple Hurwitz numbers
$$
h_{g; x_1, \ldots, x_n}^{\textnormal{one-part}},
$$
that is, Hurwitz coverings of the Riemann sphere of genus $g$ with ramification profile $\textbf{x}$ over zero, total ramification 
$
(d) = \left(\sum_i^n x_i\right) = \left(e_1(\textbf{x})\right)
$ 
over infinity, and only $2g - 1 + n$ further ramifications elsewhere, all of which are required to be simple. Let $\SSS(x)$ be the the function
\begin{align*}
\SSS(x) &= \frac{\sinh(x/2)}{(x/2)} = \sum_{k=0}^{\infty} \frac{x^{2k}}{2^{2k}(2k + 1)!} = 1 + \frac{x^2}{24} + \frac{x^4}{1920} + O(x^6).
\end{align*}
The inverse function reads
\begin{align*}
\frac{1}{\SSS(x)} &= \frac{(x/2)}{\sinh(x/2)} = 1 + \sum_{k=0}^{\infty} \frac{(2^{1 - 2k} - 1)B_{2k} x^{2k}}{(2k)!} = 1 - \frac{x^2}{24} + \frac{7x^4}{5760} + O(x^6).
\end{align*}

\noindent
Goulden, Jackson and Vakil proved that one-part double Hurwitz numbers are represented by a polynomial in the $x_i$ of degree sharply in the range $[2g - 2 +n, 4g - 2 + n]$.
\begin{theorem}[\cite{GJV}] Simple one-part Hurwitz numbers are polynomial in the ramification profile, i.e. there exist a polynomial $h_{g}^{\textnormal{one-part}}(\textbf{x})$ such that for positive integer values one has $h_{g}^{\textnormal{one-part}}(\textbf{x}) = h_{g, \textbf{x}}^{\textnormal{one-part}}$. More precisely, this polynomial is given by
\begin{equation}\label{eq:gen1}
h_{g}^{\textnormal{one-part}}(\textbf{x}) = e_1(\textbf{x})^{2g-2+n} \, P_{g,n}(x_1, \ldots, x_n),
\end{equation}
for the degree $2g$ polynomial
\begin{align}\label{eq:gen2}
P_{g,n}(x_1, \ldots, x_n) = [t^{2g}]. \frac{\prod_{i=1}^n \SSS(t x_i)}{\SSS(t)} .
\end{align} 
\end{theorem}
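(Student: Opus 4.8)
The plan is to prove the theorem through the classical character-theoretic description of Hurwitz numbers, following the original argument of Goulden--Jackson--Vakil. Set $d = e_1(\mathbf{x})$. By the monodromy description of branched covers, $h_{g;\mathbf{x}}^{\textnormal{one-part}}$ is, up to a fixed normalization constant, the number of tuples $(\sigma_0;\tau_1,\dots,\tau_m;\gamma)$ in $S_d$ with $\sigma_0\tau_1\cdots\tau_m\gamma = \mathrm{id}$, where $\sigma_0$ has cycle type $\mathbf{x}$, the $\tau_j$ are transpositions, $\gamma$ is a $d$-cycle, and $m = 2g-1+n$ by Riemann--Hurwitz; transitivity is automatic since $\gamma$ alone generates a transitive subgroup, so connected and disconnected counts agree. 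The Frobenius--Burnside formula rewrites this count as $\frac{|C_{\mathbf{x}}|\,|C_\tau|^m\,|C_{(d)}|}{d!}\sum_{\lambda \vdash d}\frac{\chi^\lambda(\mathbf{x})\,\chi^\lambda(\tau)^m\,\chi^\lambda((d))}{(\dim\lambda)^m}$, and the decisive simplification is the classical vanishing of the character of a $d$-cycle outside hook partitions: $\chi^\lambda((d)) = 0$ unless $\lambda = (d-p,1^p)$, in which case $\chi^\lambda((d)) = (-1)^p$. Thus the sum collapses to a finite sum over $p \in \{0,\dots,d-1\}$.

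Two facts about hook characters are needed. First, the value on a transposition: $\chi^{(d-p,1^p)}(\tau)/\dim = (d-1-2p)/(d-1)$, which follows from the content sum $\sum_{(i,j)\in\lambda}(j-i) = \tfrac{1}{2}d(d-1-2p)$ of a hook together with $\dim(d-p,1^p) = \binom{d-1}{p}$ and $|C_\tau| = \binom{d}{2}$. Second, the generating function $\sum_{p\ge 0}\chi^{(d-p,1^p)}(\sigma)\,y^p = \frac{1}{1+y}\prod_{i=1}^n\bigl(1-(-y)^{x_i}\bigr)$ for $\sigma$ of cycle type $\mathbf{x}$. I would obtain this by recalling that the hook representation $(d-p,1^p)$ is $\bigwedge^p$ of the standard $(d-1)$-dimensional representation, so that $\bigwedge^p\mathbb{C}^d \cong \bigwedge^p(\mathrm{std})\oplus\bigwedge^{p-1}(\mathrm{std})$ yields $\sum_p\chi_{\bigwedge^p\mathbb{C}^d}(\sigma)\,y^p = (1+y)\sum_p\chi^{(d-p,1^p)}(\sigma)\,y^p$; and the left-hand side is the product over the eigenvalues of the permutation matrix of $\sigma$, namely $\prod_{i=1}^n\prod_{j=0}^{x_i-1}(1+y\zeta_{x_i}^j) = \prod_i(1-(-y)^{x_i})$.

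Assembling these, $h_{g;\mathbf{x}}^{\textnormal{one-part}}$ is a constant depending only on $d$, $m$, and $z_{\mathbf{x}}$ times $\sum_{p\ge 0}(-1)^p\,[y^p]\tfrac{\prod_i(1-(-y)^{x_i})}{1+y}\,(d-1-2p)^m$. Replacing $y \mapsto -y$ removes the sign and turns the series into $G(y) := \tfrac{\prod_i(1-y^{x_i})}{1-y}$, so the sum becomes $\bigl[(d-1-2y\partial_y)^m G(y)\bigr]_{y=1}$. Now substitute $y = e^s$, so $y\partial_y = \partial_s$; factoring $1-e^{sx_i} = -2e^{sx_i/2}\sinh(sx_i/2)$ pulls a scalar exponential $e^{s(d-1)/2}$ out of $G(e^s)$, and since $(d-1-2\partial_s)$ acts on $e^{s(d-1)/2}h(s)$ exactly as $e^{s(d-1)/2}(-2\partial_s)h(s)$, the expression reduces to $(-2)^m\bigl[\partial_s^m\tfrac{\prod_i\sinh(sx_i/2)}{\sinh(s/2)}\bigr]_{s=0}$ up to a sign. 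Writing $\sinh(z/2) = \tfrac{z}{2}\SSS(z)$ turns $\tfrac{\prod_i\sinh(sx_i/2)}{\sinh(s/2)}$ into $(s/2)^{n-1}\bigl(\prod_i x_i\bigr)\tfrac{\prod_i\SSS(sx_i)}{\SSS(s)}$, so the $m$-th derivative at $s=0$ extracts $m!\,\prod_i x_i\cdot[s^{\,m-n+1}]\tfrac{\prod_i\SSS(sx_i)}{\SSS(s)}$; since $m-n+1 = 2g$ this is exactly $m!\,\prod_i x_i\cdot P_{g,n}(\mathbf{x})$. All powers of $2$ and all signs cancel, and combining with the centralizer constants $|C_{(d)}| = (d-1)!$, $|C_\tau| = \binom d2$, $|C_{\mathbf{x}}| = d!/z_{\mathbf{x}}$ leaves precisely the prefactor $e_1(\mathbf{x})^{2g-2+n}$. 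Finally, $\SSS(z)-1 = O(z^2)$ forces $P_{g,n}$ to be a polynomial in $\mathbf{x}$ of degree exactly $2g$ with nonzero constant term $[s^{2g}]\SSS(s)^{-1}$, giving the sharp degree range $[2g-2+n,\,4g-2+n]$.

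Once the two character facts are in hand the computation is essentially mechanical, so I do not expect a conceptual obstacle; the two genuinely structural steps are the identification of the hook-character generating function with an exterior-power character — this is where the product $\prod_i(1-(-y)^{x_i})$, and hence ultimately $\SSS$, enters — and the change of variables $y = e^s$ together with the observation that $(d-1-2y\partial_y)$ intertwines cleanly with the pulled-out exponential, which is what converts the finite hook sum into a single Taylor coefficient. The most delicate point in practice is purely bookkeeping: fixing all normalization constants so that the prefactor is exactly $e_1(\mathbf{x})^{2g-2+n}$ with no stray factorials, which is sensitive to the conventions used for labeling the marked points over $0$, fixing (or not) the monodromy over $\infty$, and ordering the simple branch points.
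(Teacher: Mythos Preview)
Your argument is the classical Goulden--Jackson--Vakil proof and is correct in outline; the two structural ingredients (vanishing of the $d$-cycle character off hooks, and the exterior-power identification of hook characters) are exactly the ones GJV use, and your reduction via $y=e^{s}$ to a single Taylor coefficient of $\prod_i \sinh(s x_i/2)/\sinh(s/2)$ is the standard manipulation. The paper, however, does not supply a proof of this theorem at all: it quotes the statement from \cite{GJV} and only remarks that ``the result of the theorem above can nowadays be reproduced by a one-line computation using more modern techniques such as semi-infinite wedge formalism, see for instance \cite{johnson}.'' So there is no paper proof to compare against; what you have written is precisely the original character-theoretic argument the paper is citing, while the alternative the paper alludes to (the operator formalism on $\bigwedge^{\infty/2}V$, where the $\mathcal{S}$-function appears directly as the vacuum expectation of vertex-operator commutators) bypasses the explicit hook sum entirely. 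Your caveat about normalisation is well placed: depending on whether one orders the simple branch points and labels the preimages of $0$, the answer differs by a factor $(2g-1+n)!/|\mathrm{Aut}(\mathbf{x})|$, so you should fix the convention before claiming the prefactor is exactly $e_1^{\,2g-2+n}$.
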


It is clear that $P_{g,n}$ is a polynomial of degree $g$ in the variables $x_1^2, \dots, x_n^2$. The result of the theorem above can nowadays be reproduced by a one-line computation using more modern techniques such as semi-infinite wedge formalism, see for instance \cite{johnson}. In the same paper Goulden-Jackson-Vakil have conjectured the existence of an ELSV-type formula for the numbers $h_{g; \textbf{x}}^{\textnormal{one-part}}$. One such ELSV formula was found in \cite{DL}:
\begin{equation}\label{eq:onepartELSV}
h_{g}^{\textnormal{one-part}}(\textbf{x}) = e_1^{2g - 2 + n} \II{g}{n} e_1 \frac{\Omega_{g,n}^{[1]}(e_1 ,e_1 ; e_1^- )}{\prod_{i=1}(1 - \frac{x_i}{e_1} \psi_i)} 
=
e_1^{1 - g} \II{g}{n} \frac{ e_1 \Omega^{[e_1]}_{g,n}(e_1 ,e_1 ; e_1^- )}{\prod_{i=1}(1 - x_i \psi_i)},
\end{equation}
where 
$
e_1^-(x_1, \dots, x_n) := (e_1(\textbf{x}) - x_1, \dots, e_1(\textbf{x}) - x_n).
$
To pass from the first to the second expression simply multiply and divide by $e_1^{3g - 3 +n}$ and pair powers of $e_1$ to cohomological degree. Putting the two results together we have:
\begin{equation}
\II{g}{n} \frac{ e_1 \Omega^{[e_1]}_{g,n}(e_1 ,e_1 ; e_1^- )}{\prod_{i=1}(1 - x_i \psi_i)} = e_1^{3g - 3 +n}\cdot P_{g,n}(x_1, \dots, x_n)
\end{equation}

Let $\Omega_{d,g,n}$ be the cohomological complex degree $d$ part of the $Omega$-class $e_1\Omega_{g,n}^{[1]}(e_1,e_1;e_1^{-})$. It is well-known that
\begin{equation}
e_1\Omega_{0,g,n} = e_{1}^{2g} \cdot \mathbf{1}_{g,n},
\end{equation}
where $\mathbf{1}_{g,n}$ is the fundamental class. Therefore we can extract $A_{g,n}$ from this expression as
\begin{equation}
A_{g,n} 
= e_1^{g - 3 + n}[t^{2g}].  \frac{\prod_{i=1}^n \SSS(t x_i)}{\SSS(t)} \;-\; \sum_{d=1}^{3g - 3 + n} e_1^{d-2g} \II{g}{n} \frac{e_1\Omega_{d,g,n}}{\prod_{i=1}(1 - x_i \psi_i)}
\end{equation}
By analysing the degree in the $\mu_i$ in the expression above, we see that, since of course $A_{g,n}$ is homogeneous of degree $3g - 3 +n$, the subleading terms on the right-hand-side should cancel each other. On the other hand, it is clear that taking the leading coefficient in the $\mu_i$ of the first term of the RHS simply amounts to removing the $\SSS(t)$ from the denominator. Let $\Omega_{g,n,d}^{\textup{top}}$ indicate the polynomial terms in $\Omega_{g,n,d}$ of degree $2g$ in the $\mu_i$. Then we have
\begin{equation}
A_{g,n} 
= e_1^{g - 3 + n}[t^{2g}]. \prod_{i=1}^n \SSS(t x_i) \;-\; \sum_{d=1}^{3g - 3 + n} e_1^{d-2g} \II{g}{n} \frac{e_1\Omega_{d,g,n}^{\textup{top}}}{\prod_{i=1}(1 - x_i \psi_i)}.
\end{equation}

It is obvious that the first term in the RHS satisfies the form of the conjecture \ref{conj:main}: as the term contains $e_1^{g - 3 +n}$ times a polynomial of degree $2g$, the most factors $e_{\lambda_i}$ with $\lambda_i > 1$ is clearly $g$ \footnote{In fact the condition $\textup{top}$ on $\Omega_{d,g,n}$ is quite restrictive: as can be seen for instance in \cite[Corollary 4]{JPPZ} the class $\Omega_{d,g,n}$ has an explicit expression in terms of stable graphs decorated with $\psi$-classes, $\kappa-$classes, and nodes which are either separable or not. Each decoration involves coefficients given by Bernoulli polynomials as above. It is discussed in \cite{QuantumWK} that a lot of the decorations arising from non-separable nodes produce subleading terms in the $\mu_i$, therefore the sum over stable graphs contains mostly sums over stable trees.}. Therefore we immediately obtain a restatement of our main conjecture in terms of $\Omega$-classes:

\begin{proposition} The following two statements are equivalent:
\begin{itemize}
\item[$i).$] Conjecture \ref{conj:main}: $A_{g,n}$ expanded in the $e_i$ basis has at most $g$ factors $e_{\lambda_i}$ with $\lambda_i > 1$.
\item[$ii).$] The expression 
$$
\sum_{d=1}^{3g - 3 + n} e_1^{d-2g} \II_{g,n} \frac{\Omega_{d,g,n}^{\textup{top}}}{\prod_{i=1}(1 - x_i \psi_i)} 
$$
expanded in the $e_i$ basis has at most $g$ factors $e_{\lambda_i}$ with $\lambda_i > 1$.  
\end{itemize}
\end{proposition}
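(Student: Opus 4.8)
The plan is to read the statement off the identity
\[
A_{g,n} \;=\; e_1^{\,g-3+n}\,[t^{2g}]\!\prod_{i=1}^n \SSS(t x_i) \;-\; \sum_{d=1}^{3g-3+n} e_1^{\,d-2g}\,\II{g}{n}\frac{e_1\,\Omega_{d,g,n}^{\textup{top}}}{\prod_{i=1}(1 - x_i \psi_i)}
\]
derived above, which writes $A_{g,n} = T_1 - T_2$ with $T_1 := e_1^{\,g-3+n}[t^{2g}]\prod_i\SSS(tx_i)$ the Bessel-type term and $T_2$ the sum on the right, which is exactly the expression in $(ii)$ up to a global factor $e_1$; since multiplying by $e_1$ only shifts the exponent of $e_1$ in each monomial $e_\nu$, it leaves unchanged the number of factors $e_{\lambda_i}$ with $\lambda_i > 1$. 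As the $e_\nu$ form a basis of the symmetric polynomials, the property of having at most $g$ factors $e_{\lambda_i}$ with $\lambda_i > 1$ means precisely that in the unique expansion $\sum_\nu D(\nu)\,e_\nu$ one has $D(\nu) = 0$ whenever $\nu$ has more than $g$ parts that are $\geq 2$, and this property is closed under addition and negation.

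Next I would check the one substantive point, that $T_1$ already has this form. First, $[t^{2g}]\prod_i\SSS(tx_i)$ is a symmetric polynomial in the $x_i$ of degree $2g$: because $\SSS(z) = \sum_{k\geq 0} z^{2k}/(2^{2k}(2k+1)!)$ is even, extracting the coefficient of $t^{2g}$ forces $k_1 + \cdots + k_n = g$ and produces $\sum_{|k| = g} c_k \prod_i x_i^{2k_i}$, a polynomial of degree $g$ in $x_1^2,\ldots,x_n^2$. Expanded in the $e$-basis it involves only monomials $e_\mu$ with $|\mu| \leq 2g$, and for such a partition the number of parts $\geq 2$ is at most $\lfloor |\mu|/2 \rfloor \leq g$. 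Multiplying by $e_1^{\,g-3+n}$ --- an honest nonnegative power, since Conjecture \ref{conj:main} is already known for $g \leq 2$ (via the string equation) and for $n \leq 3$, so that one may assume $g \geq 3$, whence $g - 3 + n \geq 1$ --- only appends parts equal to $1$ and so does not change this count. Hence every $e_\nu$ occurring in $T_1$ has at most $g$ parts $\geq 2$, i.e.\ $T_1$ satisfies the conclusion of Conjecture \ref{conj:main}.

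Combining these, I would conclude: if $\nu$ has more than $g$ parts $\geq 2$, its coefficient in $T_1$ vanishes, so its coefficient in $A_{g,n} = T_1 - T_2$ equals minus its coefficient in $T_2$. Therefore $A_{g,n}$ contains no such $\nu$ (statement $(i)$) if and only if $T_2$ does not (statement $(ii)$), which is the asserted equivalence. I do not anticipate a genuine obstacle: all the real content has already been loaded into the derivation of the displayed identity and into the one-part ELSV formula of \cite{DL}. The only places needing a little care are the clean separation of the manifestly well-behaved term $T_1$ from the Chiodo term $T_2$, the harmless identification of $T_2$ with the expression in $(ii)$ up to a factor of $e_1$, and the elementary bound that $|\mu| \leq 2g$ forces at most $g$ parts $\geq 2$, which is what renders the Bessel term transparent.
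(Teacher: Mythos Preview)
Your argument is correct and follows the same route as the paper: both rest on the identity $A_{g,n}=T_1-T_2$ coming from the one-part ELSV formula together with the elementary observation that $T_1=e_1^{\,g-3+n}[t^{2g}]\prod_i\SSS(tx_i)$, being $e_1^{\,g-3+n}$ times a symmetric polynomial of degree $2g$, already satisfies the bound on the number of parts $\geq 2$, so that the ``bad'' coefficients of $A_{g,n}$ and of $T_2$ agree up to sign. One small caveat: the reduction ``since the conjecture is known for $g\leq 2$, assume $g\geq 3$'' does not by itself dispatch the equivalence in low genus (knowing $(i)$ there does not automatically give $(ii)$); the cleaner justification is simply that $g-3+n\geq 0$ for every stable $(g,n)$ except $(g,n)=(1,1)$, and in that single remaining case there is only one variable, so every $e_j$ with $j>1$ vanishes and both statements are vacuously true.
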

The fact that this particular linear combination of integrals of $Omega$-classes is a polynomial in the variables $x_i$ is per sé quite non-trivial, given that the polynomiality of $Omega$-classes with respect to their parameters in general fails (although this followed already before this work, from the combination of \cite{GJV} and \cite{DL}). $Omega$-classes have a precise expression in terms of decorated stable graphs: the statement above implies that after integrating over all decorated strata represented by these graphs the polynomiality in the parameters features the same type of vanishing as the integrals of pure psi classes, and that this is so for all pairs of stable $(g,n)$. Again, it would be desirable to find a geometric explanation of this phenomenon in terms of the properties of $Omega$-classes.

\section{Application: Intersection numbers as a $g$-dimensional integrals}
\label{sec:gdimintegral}

This section is devoted to integral representations of the amplitudes $A_{g,n}$, as well as to the amplitudes of more involved cohomological classes \textemdash \; the Weil-Petersson polynomials. We remark that the obtained expressions rely on the main conjecture. However, dropping the assumption that conjecture \ref{conj:main} holds true still gives rise to finite integral representations, althought the number of integrations needed gets much messier.

We start from the observation that the Kontsevitch-Witten intersection numbers
can be recovered from the amplitudes as a 
residue taken simultaneously in \( n \) variables:
\begin{equation}
	\cor{ \tau_{d_{1}} \ldots \tau_{d_{n}}}_{g} 
	=
	\underset{\mathbf{x} = 0}{\operatorname{Res}}  
	\prod_{i = 1}^{n} 
	\frac{dx_{i}}{x_{i}^{d_{i} +1}}
	A_{g,n}(\textbf{x}).
	\label{eq:psifromAgn}
\end{equation}
We are going to analyse and refine in several directions this expression, turning it in particular into a $(g+1)$-dimensional integral. We are going to recall first some background on symmetric functions.

\subsection{Monomial symmetric functions}

We shall use an intermediate basis of symmetric functions.
We work with partitions with
non zero entries, so that when
\( n > \ell(\lambda) \), we use the
convention that
\( \lambda_{\ell(\lambda) + 1} ,\ldots, \lambda_{n} = 0 \). 
The monomial symmetric polynomials of a partition
\( \lambda \) in the variables \( \mathbf{x} = (x_{1} ,\ldots, x_{n} ) \) 
is the sum of all monomials 
\( x_{1}^{\nu_{1}} \ldots x_{n}^{\nu_{n}} \)
where \( \nu \) are
\textit{distinct} reorderings of \( \lambda \), this is equivalent to,
\begin{equation}
	m_{\lambda}(x_{1} ,\ldots, x_{n}) = 
	\frac{1}{\zz_{\lambda}(n-\ell(\lambda))!}
	\sum_{\sigma \in \mathfrak{S}_{n}} 
	x_{1}^{\lambda_{\sigma(1)}} \ldots x_{n}
	^{\lambda_{\sigma(n)}},
\end{equation}
where \( \zz_{\lambda} = \prod_{k \geq 1} mult_{k}(\lambda)! \) 
is the product of factorials of multiplicities
of the entries of \( \lambda \) (not to be confused with the stabiliser of the conjugacy class, which generally gets furthermore multiplied by all the individual parts). For instance $\zz_{(5,5,2,2,2)} = 2! 3! = 24$.

Some examples of $m_{\lambda}$ are \( m_{(2,1,1)}(x_{1}, x_{2} , x_{3}) = 
x_{1}^2 x_{2} x_{3} + x_{1} x_{2}^2 x_{3} 
+ x_{1} x_{2} x_{3}^2\) and 
\( m_{(1)}(x_{1}, x_{2} , x_{3}) = e_{1}(x_{1} , x_{2} , x_{3}) \).
The monomial symmetric functions have an orthonormal product that can
be represented by a residue, letting \( \mathbf{v} = ( v_{1} ,\ldots, v_{n}) \),
\begin{equation}
	\underset{\mathbf{v} = 0}{\operatorname{Res}} \prod_{i=1}^{n} 
	\frac{dv_{i}}{v_{i}}
	m_{\lambda}(\mathbf{v}) 
	m_{\mu}(\mathbf{1/v}) 
	= \delta_{\mu , \lambda} 
	\frac{n!}{\zz_{\lambda}(n-\ell(\lambda))!} .
\end{equation}
Moreover the monomial symmetric and elementary
symmetric basis satisfy
\begin{equation}
	\sum_{\substack{\lambda \\ \ell(\lambda) \leq n \\
	\lambda_{i} \leq m}} 
	e_{\lambda}(p_{1} ,\ldots, p_{m})
	m_{\lambda}(v_{1} ,\ldots, v_{n}) 
	=
	\prod_{i = 1}^{n}  
	\prod_{j = 1}^{m} 
	( 1+ p_{j} v_{i}  ),
\end{equation}
For a fixed partition \( \lambda \) and for \( n \geq \ell(\lambda) \),
we can decompose an elementary symmetric functions as
\begin{align}
	e_{\lambda}(\mathbf{p})
	&=
	\underset{\mathbf{v} = 0}{\operatorname{Res}} \prod_{i = 1}^{n} 
	\frac{dv_{i}}{v_{i}}
	\prod_{j=1}^{m} (1+p_{j} v_{i}  )
	\frac{\zz_{\lambda}(n-\ell(\lambda))!}{n!}
	m_{\lambda}(\mathbf{1/v}).
	\\
	&=
	\underset{\mathbf{v} = 0}{\operatorname{Res}} \prod_{i = 1}^{n} 
	\frac{dv_{i}}{v_{i}}
	\prod_{j=1}^{m} (1+p_{j} v_{i}  )
	\frac{1}{n!}
	\tilde{m}_{\lambda}(\mathbf{1/v}).
\end{align}
Where we used the augmented monomial symmetric polynomials for conciseness
(summing over all reorderings, possibly not distinct)
\begin{equation}
	\tilde{m}_{\lambda}(x_{1} ,\ldots, x_{n})
	=
	\sum_{\sigma \in \mathfrak{S}_{n}} 
	x_{1}^{\lambda_{\sigma(1)}} \ldots x_{n}
	^{\lambda_{\sigma(n)}}.
\end{equation}
In order to deal with the factors of \( e_{1} \), we will use the identity
\begin{equation}
	e_{1}^{k}(p_{1} ,\ldots, p_{m})
	=
	k!\,
	\underset{u = 0}{\operatorname{Res}}\, 
	\frac{du}{u^{k+1}} 
	\prod_{j=1}^{m} e^{u p_{j} }.
\end{equation}
\subsection{$g-$dimensional integrals}

For an arbitrary set of times \( t_{k} \), \( k \geq 2 \),
we consider the correlators
\begin{equation}
	\cor{e^{ \sum_{k \geq 2} t_{k-1} \tau_{k}} \prod_{i = 1}^{n} \tau_{d_{i}}}_{g}.
\end{equation}
\noindent
We denote the corresponding amplitudes by
\begin{equation}
	A_{g,n}^{\Omega_{t}} = \sum_{d_{1} ,\ldots, d_{n}} \cor{ e^{\sum_{k \geq 2} t_{k-1} \tau_{k}} \prod_{i = 1}^{n} \tau_{d_{i}} }_{g}
\prod_{i} x_{i}^{d_{i}}.
\end{equation}
Let us comment on the lower bound $k \geq 2$ of the sum in the exponential: having \( \tau_{0} \) 
in the exponential would make the whole expression ill-defined (as the number of marked points is allowed to increase without increasing the cohomological degree, therefore giving rise to an infinite sum for each monomial in the $x_i$);
whereas the \( \tau_{1} \) in the exponential can be 
simplified by means of the dilaton equation:
	\begin{align}
		&\cor{e^{\sum_{k \geq 1} t_{k-1} \tau_{k}}
	\prod_{i = 1}^{n} \tau_{d_{i}} 
		} =
	\tilde{t}_{0}^{2g-2+n} 
	\cor{e^{\sum_{k \geq 2} \tilde{t}_{k-1} \tau_{k}}
	\prod_{i = 1}^{n} \tau_{d_{i}} 
	}_{g} ,
	\end{align}
	under the substitutions
	\( (1-t_{0} )^{-1} \to \tilde{t}_{0} \) and \( t_{i}(1-t_{0})^{-1} \to \tilde{t}_{i}$, for $i \geq 1\). Let \( d_{g,n} = 3g-3+n \) be the dimension of the moduli space of curves.
We compute:
\begingroup
	\allowdisplaybreaks
\begin{align}
	A_{g,n}^{\Omega_{t}} &= \sum_{d_{1} ,\ldots, d_{n}} 
	\cor{
	e^{\sum_{k \geq 2} t_{k-1} \tau_{k}}
	\prod_{i = 1}^{n} \tau_{d_{i}} 
}_{g}
\prod_{i=1}^{n} x_{i}^{d_{i}}
\\
&= \sum_{\ell \geq 0} \frac{1}{\ell !} 
\sum_{\mu_1,\dots,\mu_\ell}
\sum_{d_{1} ,\ldots, d_{n}} 
t_{\mu_1-1} \dots t_{\mu_\ell-1}
<\tau_{\mu \, \sqcup\, d}>_g 
\prod_{i=1}^{n} x_{i}^{d_{i}}
\\
&= \sum_{\ell \geq 0} \frac{1}{\ell !} 
\sum_{\mu_{1} ,\ldots, \mu_{\ell} } 
{t_{\mu_1-1}} \dots t_{\mu_\ell-1}
\underset{\mathbf{p} = 0}{\operatorname{Res}} 
\prod_{j = 1}^{\ell} 
\frac{dp_j}{p_j^{\mu_j+1}} 
{A}_{g,\ell+n}(\mathbf{p},\mathbf{x})  \\
&= \sum_{\ell \geq 0} \frac{1}{\ell !}  
\, \underset{\mathbf{p} = 0}{\operatorname{Res}} \,
\prod_{j = 1}^{\ell}  
\frac{dp_j}{p_j} 
(\sum_{\mu \geq 2} t_{\mu-1} p_j^{-\mu} )  
{A}_{g,\ell+n}(\mathbf{p},\mathbf{x})  \\
&
\begin{aligned}
&=\sum_{\ell \geq 0} \frac{1}{\ell !}  
\sum_{\substack{|\lambda| \leq d_{g,n}+\ell \\ \lambda_{i} \geq 2 \\ l(\lambda) \leq g}}
C_g(\lambda)  
\, \underset{\mathbf{p} = 0}{\operatorname{Res}} \,
\prod_{j = 1}^{\ell}  
\frac{dp_j}{p_j}  
(\sum_{ \mu \geq 2 } t_{\mu-1} p_j^{-\mu} )   
e_\lambda(\mathbf{p},\mathbf{x}) e_1(\mathbf{p},\mathbf{x})^{d_{g,n}+\ell-|\lambda|} 
\end{aligned}
\\
&\begin{aligned}
= 
\sum_{\substack{\lambda_{i} \geq 2 \\ l(\lambda) \leq g}}
C_g(\lambda)  \sum_{\ell \geq |\lambda|-d_{g,n} }   
\underset{\mathbf{v} = 0}{\operatorname{Res}} \,
&\prod_{m=1}^{g}  
\frac{dv_m}{v_m} 
\frac{1}{g!} 
\tilde{m}_\lambda(\mathbf{1/v}) 
\frac{(d_{g,n}+\ell-|\lambda|)!}{\ell !} 
\\
\underset{u = 0}{\operatorname{Res}} \,
\frac{du}{u^{d_{g,n}+\ell-|\lambda|+1}} 
\underset{\mathbf{p} = 0}{\operatorname{Res}} \,
&\prod_{j=1}^{\ell} 
\frac{dp_j}{p_j} 
(\sum_{\mu \geq 2} t_{\mu-1} p_j^{-\mu} )  
e^{u p_j} \prod_{m = 1}^{g}  (1+v_mp_j)
\\
&\prod_{i = 1}^{n} e^{u x_{i}} 
\prod_{m=1}^{g} (1 + v_{m} x_{i} ).
\end{aligned}
\label{eq:laststep}
\end{align}
\endgroup
Now the \( p_{i}\) are independent integration variables,
which we can relabel \( p \).
Let us evaluate the residue in \( p \) and define
\begin{align}
	F(t;u,\mathbf{v})
&\coloneqq u- \underset{p = 0}{\operatorname{Res}} \, \frac{dp}{p}  
(\sum_{\mu \geq 2}
t_{\mu-1}
p^{-\mu}) e^{u p} \prod_{m=1}^{g} (1+v_mp)  \\
&= u-\sum_{\substack{k,j \geq 0 \\ j + k \geq 2}} 
t_{k+j-1}\ 
\frac{u^j}{j!} \ e_k(\mathbf{v}).
\label{def:Ftuv}
\end{align}
We now want to evaluate the sum over \( \ell \) by means of the following series identity:
\begin{equation}
	\sum_{\ell \geq -s} \frac{(\ell+s)!}{\ell!} x^{\ell} =
	\begin{cases}
		\displaystyle s! \, (1-x)^{-(s+1)} & s \geq 0\\
		\displaystyle \frac{(-1)^{s}}{(-s-1)!} (1-x)^{-(s+1)} 
		\log(1-x) + h_{s}(x) & s < 0
	\end{cases}
	\label{eq:newton}
\end{equation}
where \( h_{s}(x) \) is a polynomial of degree \( -s-1 \). 
The radius of convergence is \( |x| < 1 \).
There are two distinct cases:
\\[10pt]
$\bullet$ If $|\lambda|\leq d_{g,n}$:
\begin{equation}
	\sum_\ell u^{-\ell} \ \frac{(d_{g,n}+\ell-|\lambda|)!}{\ell !}  (u-F(t;u,\mathbf{v}))^\ell
	= (d_{g,n}-|\lambda|)! \ \left(\frac{1}{u}F(t;u,\mathbf{v})\right)^{|\lambda|-d_{g,n}-1}
	\label{eq:series1}
\end{equation}
$\bullet$ If $|\lambda|> d_{g,n}$:
\begin{multline}
	\sum_\ell u^{-\ell} \ \frac{(d_{g,n}+\ell-|\lambda|)!}{\ell !}  (u-F(t;u,\mathbf{v}))^\ell\\
	= \frac{(-1)^{|\lambda| - d_{g,n}} }{(|\lambda|-d_{g,n}-1)!} \ \left(\frac{1}{u}F(t;u,\mathbf{v})\right)^{|\lambda|-d_{g,n}-1}
	\ln{\left(\frac{1}{u}F(t;u,\mathbf{v})\right)}
	+ h_{d_{g,n} - |\lambda|}\left(\frac{F(t; u,\mathbf{v})}{u} \right)
	\label{eq:series2}
\end{multline}
One can check that the contribution from the polynomial \( h \) vanishes under the \( u \)-residue when the $v_i$ are small enough, hence one can 
safely discard that term from the computation.
To sum this series, we need to ensure that it is 
converging everywhere on some contours corresponding to the residue integration
in the \( u\) and \( \mathbf{v}\) planes.
Initially centred around \( 0 \), these contours might need to be
deformed in order to satisfy the constraint
$$ \Big{|}1-F(t; u,\mathbf{v})/u \Big{|} < 1. $$
To this end, let us write \( F \) as $F(t; u,\mathbf{v}) = \sum_{k \geq 0} F_{k}(t; u) e_{k}(\mathbf{v})$. Notice that the sum over \( k \) is bounded by the amount of the $v$-variables, which is \( g \). By the triangular identity we have that
\begin{equation}
	\Big{|} 1 - \frac{F(t; u, \mathbf{v})}{u} \Big{|} \leq  \Big{|} 1 - \frac{F_{0}(t; u)}{u} \Big{|} 
	+ \Big{|} \sum_{k = 1}^{g} \frac{F_{k}(t; u)}{u} e_{k}(\mathbf{v}) \Big{|}.
\end{equation}
Clearly, if we keep the \( v_{i} \) small,
the contribution from \( k \geq 1 \) is also small.
So we keep the contours in
the \( \mathbf{v} \) as a residue around zero, and we are 
left with the constraint
\begin{equation}
	\Big{|} 1 - \frac{F_{0}(t; u)}{u} \Big{|} < 1, \qquad \qquad \text{or equivalently} \qquad \qquad \Big{|} \sum_{j=2}t_{j-1}\frac{u^j}{j!} \Big{|} < 1.
	\label{eq:cond}
\end{equation}
Depending on the function \( F(u) \), we might have to 
deform the residue contour at \( u = 0 \) to 
a contour \( \gamma \) in
the region where \eqref{eq:cond} is satisfied,
homotopic with respect to the poles of 
\( F(t ; u,\mathbf{v}) \) in \( u \)
(equivalently, the poles of all \( F_{k}(u) \), as the $e_k$ are linearly independent).

Let us denote by \( \mathfrak{ \hat{u}}(F) \) the set of zeros of \( F(t;u,\mathbf{v}) \) in the region defined by \(| 1 - \frac{F_{0}(u)}{u} | < 1\): the deformed integral along \( \gamma \) picks up contributions labeled by \( \mathfrak{ \hat{u}}(F) \) in two distinct ways, depending on whether the terms are meromorphic or logarithmic. The meromorphic terms of \eqref{eq:series1} yield residues at \( \hat{u} \) for
each \( \hat{u} \in \mathfrak{ \hat{u} }(F) \). The logarithmic terms of \eqref{eq:series2} are considered with the cut in the $u$-variable
on the line segment $[0,\hat{u}]$. If \(\gamma\) encircles this cut, it can be squeezed arbitrarily close to it: it can therefore be considered as the integral over the cut in one direction plus the integral over the cut in the opposite direction in which the argument gets raised by \( 2 \pi i \). After cancellation, the result equals \( 2 \pi i \) times the remaining factors of the integrand, integrated along the cut in the opposite direction. Therefore from \eqref{eq:laststep} we then obtain that $A_{g,n}^{\Omega_t}$ equals
\begin{align}
&
\sum_{\substack{ |\lambda| \leq d_{g,n} \\ \lambda_{i} \geq 2\\ l(\lambda) \leq g }} 
(d_{g,n}-|\lambda|)! \ 
\frac{C_g(\lambda)   }{g!} 
\, \underset{\mathbf{v} = 0}{\operatorname{Res}} \,  
\prod_{m=1}^{g}  \frac{dv_m}{v_m} 
\tilde{m}_\lambda(\mathbf{1/v}) \\
&\hspace{4cm}
\sum_{ \hat{u} \in \mathfrak{ \hat{u} }(F)} 
\underset{u = \hat{u}}{\operatorname{Res}} 
F(t;u,\mathbf{v})^{|\lambda|-d_{g,n}-1}
\prod_{i,m} e^{u x_{i}} (1 + v_{m} x_{i} )
\\
+&\sum_{\substack{ |\lambda| > d_{g,n} \\ \lambda_{i} \geq 2\\ \ell(\lambda) \leq g }} 
\frac{ (-1)^{|\lambda| - d_{g,n}}}{(|\lambda|-d_{g,n}-1)!}   
\frac{C_g(\lambda)}{g!} 
\, \underset{\mathbf{v} = 0}{\operatorname{Res}} \,  
\prod_{m=1}^{g}  \frac{dv_m}{v_m} 
\tilde{m}_\lambda(\mathbf{1/v})\\
&\hspace{4cm}
\sum_{ \hat{u} \in \mathfrak{ \hat{u} }(F)} 
\int_{0}^{ \hat{u}} du
\, F(t;u,\mathbf{v})^{|\lambda|-d_{g,n}-1}
\prod_{i,m} e^{u x_{i}} 
(1 + v_{m} x_{i} )
\end{align} 

\noindent
	When \( \mathfrak{\hat{u}}(F) = \{0\} \), 
	and in particular when 
	\( |1 - F_{0}(u)/u| < 1 \) is verified 
	in a neighbourhood of the origin,
	the line integrals do not contribute, so the last two lines can be discarded.
Let us indicate both operators \( \underset{u = \hat{u}}{\operatorname{Res}} = \frac{1}{2 i \pi} \oint_{ \hat{u}} \) and
\( \int_{0}^{ \hat{u}} \) by the same symbol \( \sqint_{ \hat{u}}\): 
if the integrand is regular in \( F \) it is a line integral on
the segment \( [0, \hat{u} ] \), and if it is meromorphic and singular
it is a residue around \( \hat{u} \).
Finally, we denote
\( \sqint = \sum_{u \in \hat{u}} \sqint_{ \hat{u}} \).
\begin{proposition}
	\label{lem:gdim}
	Assuming conjecture \ref{conj:main}, the times dependent intersection numbers can be written as
	the \( g+1 \) dimensional integral
\begin{multline}
	\label{eq:gdim}
\sum_{d_{1} ,\ldots, d_{n}} 
	\cor{
	e^{\sum_{k \geq 2} t_{k-1} \tau_{k}}
	\prod_{i = 1}^{n} \tau_{d_{i}} 
}_{g}
\prod_{i=1}^{n} x_{i}^{d_{i}}
\\
=
\underset{\mathbf{v} = 0}{\operatorname{Res}} \, \prod_{m=1}^{g}  
\frac{dv_m}{v_m} \sqint du \, \frac{B_{g,n}(F(t;u,\mathbf{v}),\mathbf{v})   
}{F(t;u,\mathbf{v})^{d_{g,n} + 1 } }
\prod_{i,m} e^{u x_{i}} 
(1 + v_{m} x_{i} ),
\end{multline}
where
\begin{multline}
	\label{def:Bgn}
	B_{g,n}(\xi,v_1,\dots,v_g)=
\sum_{\substack{ |\lambda| \leq d_{g,n} \\ \lambda_{i} \geq 2\\ l(\lambda) \leq g }} 
(d_{g,n}-|\lambda|)! \ 
\frac{C_{g}(\lambda)}{g!} 
\tilde{m}_\lambda(\mathbf{1/v}) \, \xi^{|\lambda|}
\\
+
\sum_{\substack{ |\lambda| > d_{g,n} \\ \lambda_{i} \geq 2\\ l(\lambda) \leq g }} 
\frac{ (-1)^{|\lambda| - d_{g,n}}}{(|\lambda|-d_{g,n}-1)!}   
\frac{C_{g}(\lambda)}{g!} 
\tilde{m}_\lambda(\mathbf{1/v}) \, \xi^{|\lambda|},
\end{multline}
and
$
	F(t;u,\mathbf{v})
= u-\sum_{k,j} 
t_{k+j-1}\ 
\frac{u^j}{j!} \ e_k(\mathbf{v}).
$

\end{proposition}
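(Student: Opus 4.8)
The statement is, in essence, the endpoint of the computation carried out just above: the chain of equalities through \eqref{eq:laststep} already writes $A_{g,n}^{\Omega_t}$ as a sum over partitions $\lambda$ of residues in $u$ and in $v_1,\dots,v_g$, and the proposition is obtained by performing the remaining sum over the insertion number $\ell$ and re-packaging. So the plan is to assemble these pieces in order and to justify the analytic steps. First I would start from \eqref{eq:psifromAgn}, expand $e^{\sum_{k\ge2}t_{k-1}\tau_k}$, and substitute the residue representations of $e_\lambda$ (in terms of $\tilde m_\lambda(\mathbf{1/v})$) and of $e_1^k$ (in terms of a $u$-residue) established in the previous subsection. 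Conjecture~\ref{conj:main} is exactly what makes a $(g+1)$-dimensional integral possible: it forces $\ell(\lambda)\le g$, so $\tilde m_\lambda(\mathbf{1/v})$ never needs more than $g$ variables $v_1,\dots,v_g$, and the total number of contour integrations stabilizes at $g$ (the $v_m$) plus one ($u$, tracking the $e_1$'s). Performing the residues in the auxiliary variables $p_j$---which are independent and hence may be relabelled as a single $p$---converts each factor into a power of $F(t;u,\mathbf v)$ as in \eqref{def:Ftuv}, and the sum over $\ell$ collapses to a Newton-type series in $1-F/u$.

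Next I would sum that series using the identity \eqref{eq:newton}, separating according to whether $|\lambda|\le d_{g,n}$ (so $s=d_{g,n}-|\lambda|\ge0$, yielding the meromorphic term $(d_{g,n}-|\lambda|)!\,(F/u)^{|\lambda|-d_{g,n}-1}$ of \eqref{eq:series1}) or $|\lambda|>d_{g,n}$ (so $s<0$, yielding the logarithmic term of \eqref{eq:series2} together with a polynomial remainder $h_{d_{g,n}-|\lambda|}(F/u)$). Since $h$ is regular at $u=0$, it drops out of the $u$-residue for the $v_m$ small and may be discarded. Gathering the surviving terms, with prefactors $(d_{g,n}-|\lambda|)!/g!$ and $(-1)^{|\lambda|-d_{g,n}}/\big((|\lambda|-d_{g,n}-1)!\,g!\big)$ attached to $C_g(\lambda)\,\tilde m_\lambda(\mathbf{1/v})$, reconstructs exactly the polynomial $B_{g,n}(F,\mathbf v)$ of \eqref{def:Bgn} divided by $F^{d_{g,n}+1}$, which is the integrand of \eqref{eq:gdim}.

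The main obstacle, and the only genuinely delicate point, is the contour analysis rather than any algebraic identity. One must verify that the $\ell$-series converges on a contour in $u$ homotopic---relative to the poles of $F(t;u,\mathbf v)$ in $u$---to a small loop around $u=0$, i.e.\ that the loop can be pushed into the region $|1-F_0(t;u)/u|<1$ of \eqref{eq:cond} while keeping the $v_m$ small enough that the $k\ge1$ part of $F=\sum_k F_k(t;u)e_k(\mathbf v)$ stays subdominant; that $\ln(F/u)$ admits a single-valued branch once a cut is placed along each segment $[0,\hat u]$ for $\hat u\in\mathfrak{\hat u}(F)$; and that squeezing the deformed contour onto such a cut and cancelling the two opposite traversals produces $2\pi i$ times the line integral along the cut, so that the logarithmic terms of \eqref{eq:series2} are captured precisely by the line-integral half of $\sqint_{\hat u}$ while the meromorphic terms of \eqref{eq:series1} are captured by its residue half $\operatorname{Res}_{u=\hat u}$. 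When $\mathfrak{\hat u}(F)=\{0\}$---in particular whenever \eqref{eq:cond} holds near the origin---the line integrals vanish and one recovers the plain residue form.

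Since all residues in $\mathbf v$ are formal around $0$, the smallness constraints above cost nothing. Finally I would note that the specialization $t\equiv0$ gives $F(t;u,\mathbf v)=u$ and $\mathfrak{\hat u}(F)=\{0\}$, collapses the second sum in $B_{g,n}$ (only $\ell=0$ contributes, forcing $|\lambda|\le d_{g,n}$) so that $B_{g,n}=B^-_{g,n}$, and thereby specializes \eqref{eq:gdim} to the $[u^0v_1^0\cdots v_g^0]$-extraction formulae for $A_{g,n}(\mathbf x)$ and for $\cor{\tau_{d_1}\cdots\tau_{d_n}}_g$; choosing instead the times encoding $2\pi^2\kappa_1$ turns $F$ into the Bessel-type function of the theorem on $V_{g,n}(\mathbf L)$.
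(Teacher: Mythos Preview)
Your proposal is correct and follows essentially the same route as the paper: the derivation through \eqref{eq:laststep}, the evaluation of the $p$-residues into powers of $u-F$, the summation over $\ell$ via the Newton-type identity \eqref{eq:newton} with the two cases \eqref{eq:series1}--\eqref{eq:series2}, the dismissal of the polynomial remainder $h$, and the contour deformation leading to the $\sqint$ operator are exactly the steps the paper carries out. Your identification of the role of Conjecture~\ref{conj:main} (bounding $\ell(\lambda)\le g$ so that only $g$ variables $v_m$ are needed) and of the convergence condition \eqref{eq:cond} matches the paper's own justification.
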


\noindent
In order to extract the coefficient
\(
	\cor{e^{ \sum_{k \geq 2} t_{k-1} \tau_{k}} \prod_{i = 1}^{n} \tau_{d_{j}}}_{g}\)
	we take the residue in the variables \( x_{i} \), which
	can be computed explicitly:
	\begingroup
	\allowdisplaybreaks
	\begin{align*}
		\underset{\mathbf{x} = 0}{\operatorname{Res}}  \prod_{i=1}^{n} \frac{dx_{i}}{x_{i}^{d_{i} +1}}
		e^{u x_{i}} \prod_{m=1}^{g} (1+v_{m} x_{i} )
			&=
			\prod_{i=1}^{n} \frac{dx_{i}}{x_{i}^{d_{i} +1}}
			\sum_{k \geq 0} \frac{u^{k} x_{i}^{k}}{k!} 
			\sum_{r = 0}^{g} e_{r}(\mathbf{v}) x_{i}^{r}
			\\
			&=
			\prod_{i=1}^{n}
			\sum_{r = 0}^{d_{i}}
			\frac{u^{d_{i}-r}}{(d_{i} - r)!} e_{r}(\mathbf{v})
			\\
			&=
			u^{d_{g,n}} 
			\prod_{i=1}^{n}
			\sum_{r = 0}^{d_{i}}
			\frac{e_{r}(\mathbf{v})}{u^{r}}
			\frac{1}{(d_{i} - r)!}.
	\end{align*}
	\endgroup
	\begin{corollary}
		The times dependent correlators can be computed as the following  \( (g+1) \)-dimensional
		integral:
		\begin{equation}
	\cor{
	e^{\sum_{k \geq 2} t_{k-1} \tau_{k}}
	\prod_{i = 1}^{n} \tau_{d_{i}} 
}_{g}
=
\underset{ \mathbf{v} = 0}{\operatorname{Res}} \,
\prod_{m=1}^{g}  
\frac{dv_m}{v_m} \sqint du \, u^{d_{g,n}}
\, \frac{B_{g,n}(F(t;u,\mathbf{v}),\mathbf{v})   
}{F(t;u,\mathbf{v})^{d_{g,n} + 1 } }
\prod_{i=1}^{n}
\sum_{r = 0}^{d_{i}}
			\frac{e_{r}(\mathbf{v})}{u^{r}}
			\frac{1}{(d_{i} - r)!}.
\end{equation}
\end{corollary}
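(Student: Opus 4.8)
The plan is to deduce the corollary from Proposition~\ref{lem:gdim} by extracting the $\tau$-correlator from the amplitude $A_{g,n}^{\Omega_{t}}$ via the residue formula \eqref{eq:psifromAgn}. Concretely, I would apply $\underset{\mathbf{x}=0}{\operatorname{Res}}\prod_{i=1}^{n}\frac{dx_{i}}{x_{i}^{d_{i}+1}}$ to both sides of \eqref{eq:gdim}. On the right-hand side the only $\mathbf{x}$-dependence lives in the factor $\prod_{i,m}e^{ux_{i}}(1+v_{m}x_{i})$, so after exchanging the $\mathbf{x}$-residues with the operators $\underset{\mathbf{v}=0}{\operatorname{Res}}\prod_{m}\frac{dv_{m}}{v_{m}}$ and $\sqint du$, the $\mathbf{x}$-residue acts on that factor alone.

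The second step is the explicit evaluation of this residue, which is precisely the computation displayed immediately before the statement:
\[
\underset{\mathbf{x}=0}{\operatorname{Res}}\ \prod_{i=1}^{n}\frac{dx_{i}}{x_{i}^{d_{i}+1}}\ \prod_{i,m}e^{ux_{i}}(1+v_{m}x_{i})
\;=\;
u^{d_{g,n}}\prod_{i=1}^{n}\sum_{r=0}^{d_{i}}\frac{e_{r}(\mathbf{v})}{u^{r}}\,\frac{1}{(d_{i}-r)!}.
\]
Substituting this factor in front of $B_{g,n}(F(t;u,\mathbf{v}),\mathbf{v})\,F(t;u,\mathbf{v})^{-d_{g,n}-1}$ inside the remaining $(u,\mathbf{v})$ integral yields verbatim the claimed $(g+1)$-dimensional integral formula, so no further manipulation is required.

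The main (and essentially only) point to justify is the interchange of the $\mathbf{x}$-residues with the $u$- and $v_{m}$-integrations. I would argue this exactly as in the proof of Proposition~\ref{lem:gdim}: on the contours used there --- a small circle around $0$ for each $v_{m}$, and for $u$ the contour underlying $\sqint$, which is a small loop near $0$ together with, for each $\hat{u}\in\mathfrak{\hat u}(F)$, either a small loop around $\hat{u}$ or the segment $[0,\hat{u}]$ --- the integrand of \eqref{eq:gdim}, after the resummations \eqref{eq:series1}--\eqref{eq:series2}, is a bona fide (mero)morphic function whose expansion as a formal power series in the $x_{i}$ converges uniformly on these compact contours, so that coefficient extraction in the $x_{i}$ commutes with the contour integrations term by term. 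The extracted factor $u^{d_{g,n}}\prod_{i}\sum_{r\le d_{i}}e_{r}(\mathbf{v})u^{-r}/(d_{i}-r)!$ is moreover a Laurent polynomial in $u$ and a polynomial in the $v_{m}$, so it introduces no new poles or branch cuts and is compatible with both the residue part and the line-integral part of $\sqint$. Once this is checked, the corollary is immediate.
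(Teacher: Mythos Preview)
Your proposal is correct and follows essentially the same approach as the paper: apply the $\mathbf{x}$-residue \eqref{eq:psifromAgn} to both sides of \eqref{eq:gdim}, use the explicit computation displayed immediately before the corollary, and substitute. The paper does not spell out the interchange-of-residues justification that you supply, so your additional paragraph is a refinement rather than a departure; one minor inaccuracy is that $\sqint$ in the paper is by definition $\sum_{\hat u\in\mathfrak{\hat u}(F)}\sqint_{\hat u}$ and does not include a separate loop at $u=0$ unless $0\in\mathfrak{\hat u}(F)$, but this does not affect your argument.
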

\begin{remark}
	When \( \mathfrak{\hat{u}}(F) = \{0\} \), 
	and in particular when 
	\( |1 - F_{0}(u)/u| < 1 \) is verified 
	in a neighbourhood of the origin,
	the line integrals in \( \sqint \) 
	do not contribute
	and it is enough to restrict \( B_{g,n} \) to the part that
	yields negative powers of \( F \), that is, to
\begin{equation}
	\label{def:Bgnmin}
	B^{-}_{g,n}(\xi,v_1,\dots,v_g)=
\sum_{\substack{ |\lambda| \leq d_{g,n} \\ \lambda_{i} \geq 2\\ l(\lambda) \leq g }} 
(d_{g,n}-|\lambda|)! \    
\frac{C_g(\lambda)}{g!} 
\tilde{m}_\lambda(\mathbf{1/v}) \xi^{|\lambda|}.
\end{equation}
In that case \( \sqint (\cdot)\, du \) is a simple residue in \( u \) at the origin.
\end{remark}

\begin{corollary} Setting all times \( t_{k} \)'s to zero, we recover the Witten--Kontsevich
intersection numbers of $\psi-$classes. Therefore in terms of the corresponding amplitudes we have:
\begin{align}
	A_{g,n}(\mathbf{x})
	&=
\underset{\mathbf{v} = 0}{\operatorname{Res}}  
\prod_{m=1}^{g}  
\frac{dv_m}{v_m} 
\underset{u = 0}{\operatorname{Res}} \frac{du}{u} \, 
\frac{B^{-}_{g,n}(u,\mathbf{v})}{u^{d_{g,n}+1} }
\prod_{i,m} e^{u x_{i}} 
(1 + v_{m} x_{i} )
\\
&=
[ u^{0} v_{1}^{0} \ldots v_{g}^{0}] 
\,
\frac{B^{-}_{g,n}(u,\mathbf{v})}{u^{d_{g,n}} }
\prod_{i,m} e^{u x_{i}} 
(1 + v_{m} x_{i} )
\end{align}
Equivalently, for the correlators we have:
		\begin{align}
	\cor{\tau_{d_{1}} \ldots \tau_{d_{n}} }_{g}
&=
\underset{\mathbf{v} = 0}{\operatorname{Res}}  
\prod_{m=1}^{g}  
\frac{dv_m}{v_m} 
\underset{u = 0}{\operatorname{Res}} \frac{du}{u} \, 
B^{-}_{g,n}(u,\mathbf{v})   
\prod_{i=1}^{n}
			\sum_{r = 0}^{ d_{i}}
			\frac{e_{r}(\mathbf{v})}{u^{r}}
			\frac{1}{(d_{i} - r)!}
\\
&=
[ u^{0} v_{1}^{0} \ldots v_{g}^{0}] 
\,
B^{-}_{g,n}(u,\mathbf{v})   
\prod_{i=1}^{n}
\sum_{r = 0}^{d_{i}}
			\frac{e_{r}(\mathbf{v})}{u^{r}}
			\frac{1}{(d_{i} - r)!}
\end{align}
\end{corollary}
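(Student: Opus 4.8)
The plan is to read off this Corollary from Proposition~\ref{lem:gdim} and the Corollary following it by specialising all the times $t_k$ to $0$, so the only work is to carry out the substitution and to check that the auxiliary contour contributions degenerate. First, setting every $t_k=0$ in the definition of $A_{g,n}^{\Omega_t}$ turns the insertion $e^{\sum_{k\geq 2}t_{k-1}\tau_k}$ into $1$, whence $A_{g,n}^{\Omega_0}=A_{g,n}$ and $\cor{e^{\sum_{k\geq 2}t_{k-1}\tau_k}\prod_{i=1}^{n}\tau_{d_i}}_g$ becomes $\cor{\tau_{d_1}\ldots\tau_{d_n}}_g=\int_{\MMMbar_{g,n}}\psi_1^{d_1}\cdots\psi_n^{d_n}$, the Witten--Kontsevich numbers; this is the first assertion of the statement.

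Next I would evaluate the function $F$ of \eqref{def:Ftuv} at $t=0$: the whole sum $\sum_{k,j\geq 0,\,j+k\geq 2}t_{k+j-1}\tfrac{u^j}{j!}e_k(\mathbf{v})$ vanishes, leaving $F(0;u,\mathbf{v})=u$, so in particular its $e_0$-component is $F_0(0;u)=u$ and $|1-F_0(u)/u|=0<1$ holds identically, hence on a neighbourhood of the origin. By the analysis preceding Proposition~\ref{lem:gdim} and the Remark after it, this forces $\mathfrak{\hat{u}}(F)=\{0\}$: the logarithmic contributions and the line integrals disappear, the polynomial $B_{g,n}$ may be replaced by $B^{-}_{g,n}$ of \eqref{def:Bgnmin} (the terms with $|\lambda|>d_{g,n}$ produce $F^{|\lambda|-d_{g,n}-1}=u^{|\lambda|-d_{g,n}-1}$, which is regular at $u=0$ and so contributes nothing to the $u$-residue), and the combined integration operator collapses to the ordinary residue $\operatorname{Res}_{u=0}$. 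Substituting $F=u$ together with these simplifications into Proposition~\ref{lem:gdim} and rewriting the $\mathbf{v}$- and $u$-residues as coefficient-extraction operators then yields
\[
A_{g,n}(\mathbf{x})=[\,u^{0}v_{1}^{0}\ldots v_{g}^{0}\,]\;\frac{B^{-}_{g,n}(u,\mathbf{v})}{u^{d_{g,n}}}\prod_{i,m}e^{ux_i}(1+v_mx_i),
\]
which is the amplitude form claimed, its residue rewriting being the standard dictionary $\operatorname{Res}_{v_m=0}\tfrac{dv_m}{v_m}\leftrightarrow[v_m^0]$ and $\operatorname{Res}_{u=0}\leftrightarrow$ coefficient extraction.

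For the correlator form I would apply the $\mathbf{x}$-residue \eqref{eq:psifromAgn}, that is $\operatorname{Res}_{\mathbf{x}=0}\prod_i\tfrac{dx_i}{x_i^{d_i+1}}$, to both sides. Since neither $B^{-}_{g,n}$ nor the bracket operator involves the $x_i$, the residue passes inside and one uses the evaluation
\[
\underset{\mathbf{x}=0}{\operatorname{Res}}\,\prod_{i=1}^{n}\frac{dx_i}{x_i^{d_i+1}}\,\prod_{i=1}^{n}e^{ux_i}\prod_{m=1}^{g}(1+v_mx_i)=u^{d_{g,n}}\prod_{i=1}^{n}\sum_{r=0}^{d_i}\frac{e_r(\mathbf{v})}{u^{r}}\frac{1}{(d_i-r)!}
\]
carried out just before the statement; the factor $u^{d_{g,n}}$ cancels the $u^{-d_{g,n}}$ and one is left precisely with the asserted expression for $\cor{\tau_{d_1}\ldots\tau_{d_n}}_g$.

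I do not expect a genuinely difficult step: the content is entirely this specialisation. The one place meriting a careful word rather than a mere citation is the vanishing of the line-integral/logarithmic part of $B_{g,n}$ — i.e.\ that $\mathfrak{\hat{u}}(F)=\{0\}$ when $F=u$, which is what legitimises both the passage to $B^{-}_{g,n}$ and the reduction of the combined operator to a plain residue — together with consistent bookkeeping of the degree shifts between the residue and coefficient-extraction notations. As an independent check one can match the resulting closed forms against \eqref{eq:A0n}, \eqref{eq:A1n} and \eqref{eq:A2n}, and against the explicit one- and two-point formulae of Section~\ref{sec:n1}.
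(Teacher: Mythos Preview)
Your proposal is correct and follows essentially the same route as the paper: specialise Proposition~\ref{lem:gdim} and its subsequent Corollary at $t_k=0$, observe that $F(0;u,\mathbf{v})=u$ so that $|1-F_0(u)/u|=0$ and $\mathfrak{\hat u}(F)=\{0\}$, invoke the Remark to discard the logarithmic/line-integral part and replace $B_{g,n}$ by $B^-_{g,n}$, and then read off the correlator form via the explicit $\mathbf{x}$-residue computed just before the statement. The paper treats this Corollary as an immediate specialisation without further argument, and your write-up supplies exactly the bookkeeping one would expect.
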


\begin{remark}
Note that taking the coefficient of the $v_i^0$ restricts the partitions $\lambda$ of $B_{g,n}^{-}$ further to partitions with $\lambda_i \leq n$, as it is impossible to collect a power of $v_i$ higher than $n$ from the product of $n$ elementary symmetric polynomials in the $v_j$. 
\end{remark}

Lemma \ref{lem:gdim} provides a much general formula allowing to specify different times $t_k$ which vary from problem to problem. The key ingredient of the lemma is \( B_{g,n} \), which is itself times independent. As of now we are only able to compute \( B_{g,n} \) for a fixed genus \( g \) (and for all \( n \)) from the data of the amplitude \( A_{g,6g-3} \). The amplitude has to be computed with different methods, such as Virasoro constraints. However, we think that the study of the coefficients \( C_{g}(\lambda) \) beyond string and dilaton equations will provide new ways of computing \( B_{g,n} \) in a more direct way.

\subsection{Explicit functions $B_{g,n}$ in low genera}
The functions $B_{g,n}$ and $B^{-}_{g,n}$ for low genus can be given explicitly as follows.

\subsubsection{Genus zero:}

\begin{gather}
	B_{0,n} = \delta_{n < 3} \frac{(-1)^{n+1}}{(2-n)!} + \delta_{n \geq 3} (n-3)!
	\\
	B^{-}_{0,n} = \delta_{n \geq 3} (n-3)!
	\label{eq:B0nmin}
\end{gather}

\subsubsection{Genus one:}
\begin{equation}
	\begin{aligned}
		B_{1,n}( \xi , v)
	&= n! - \sum_{k = 2}^{n} (n-k)! (k-2)!
	\left(\frac{\xi}{ v} \right)^{k}
	+
	\sum_{k = n+1}^{\infty}
	\frac{(-1)^{n-k+1}}{(k-n+1)!}(k-2)! \left(\frac{\xi}{v} \right)
	\\
	&= n! - \sum_{k = 2}^{n} (n-k)! (k-2)!
	\left(\frac{\xi}{ v} \right)^{k}
	+
	\left(\frac{\xi}{ v} \right)^{n+1}
	\frac{(n-1)!}{( 1 + \xi/v)^{n}}
	\end{aligned}
\end{equation}

\begin{gather}
	B^{-}_{1,n}( \xi , v)
	= n! - \sum_{k = 2}^{n} (n-k)! (k-2)!
	\left(\frac{\xi}{ v} \right)^{k}
\end{gather}

\subsubsection{Genus two:}
\begin{equation}
	\begin{aligned}
	B_{2,n}^{-}(\xi , v_{1} , v_{2}) 
	=& \, (n+3)!
	-
	(n+1)! \left( \frac{1}{v_{1}^{2}} + \frac{1}{ v_{2}^{2}} \right) 
	\xi^{2}
	- \frac{9}{5} n!
	\left( \frac{1}{v_{1}^{3}} + \frac{1}{ v_{2}^{3}} \right) 
	\xi^{3} 
     	\\&-
	\sum_{k = 4}^{n+3} (n-k+3)! 
	\frac{(k-3)!}{60} (k^{3} + 21 k^2 - 70 k + 96)
	\left( \frac{1}{v_{1}^{k}} + \frac{1}{ v_{2}^{k}} \right) 
	\xi^{k} 
	\\&
	+ \frac{9}{5} (n-1)!
	\frac{1}{v_{1}^{2} v_{2}^2}
	\xi^{4}
	+ \frac{9}{5} (n-2)!
	\left( \frac{1}{v_{1}^{3} v_{2}^2} + \frac{1}{ v_{1}^2 v_{2}^{3}} \right) 
	\xi^{5} 
     	\\& + \sum_{k = 4}^{n+1} (n-k+1)! 
	\frac{(k-1)!}{20} (k+16)
	\left( \frac{1}{v_{1}^{k} v_{2}^2} + \frac{1}{ v_{1}^2v_{2}^{k}} \right) 
	\xi^{k+2} 
     	\\&- \sum_{k = 3}^{n} (n-k)! \frac{k!}{20} 
	\left( \frac{1}{v_{1}^{k}v_{2}^{3}} + \frac{1}{ v_{1}^{3} v_{2}^{k}} \right) 
	\xi^{k+3} 
\end{aligned}
\end{equation}

\subsection{Application to Weil-Petersson volumes}

In the previous section we developed a times dependent formula for applications to different enumerative geometric problems. In this section we employ this formula specialising it to the suitabel times $t_k$ providing the Weil-Petersson volumes. 

It can be interesting to notice how in this case Bessel functions naturally arise from the function $F$. The fact that Bessel functions must be involved in the amplitudes of the Weil-Petersson volumes has been known in the literature for a fairly long time, as explained below, and formulae in low genera were made precise. Nevertheless, we achieve a concrete formula for their involvement in all genera which, to the best of our knowledge, is new.

For a vector of positive real numbers \( \mathbf{L} = ( L_{1} ,\ldots, L_{n}) \),
let \( \mathcal{M}_{g,n}(\mathbf{L}) \) be the moduli space of 
genus \( g \) hyperbolic surfaces with \( n \) hyperbolic boundaries of length
prescribed by \( \mathbf{L} \). Weil-Petersson volumes are defined as the integral of the Weil-Petersson metric
form \( \omega \) on \( \mathcal{M}_{g,n}(\mathbf{L}) \):
\begin{equation}
	V_{g,n}(\mathbf{L}) = \frac{1}{(3g-3+n)!} 
	\int_{\mathcal{M}_{g,n}(\mathbf{L})} \omega^{3g-3+n}.
	\label{eq:VMgnL}
\end{equation}
The volume form extends to a closed form on \( \overline{\mathcal{M}}_{g,n} \) 
and defines a cohomology class \( [ \omega ] \in H^{2}( \overline{\mathcal{M}}_{g,n} , \mathbb{R})  \). It is also known \cite{Wolpert} that
\( [\omega] = 2 \pi^2 \kappa_{1} \) 
where \( \kappa_{1} \) is the
first Mumford class on \( \overline{\mathcal{M}}_{g,n} \).
In \cite{Mirzakhani} Mirzakhani shows that \eqref{eq:VMgnL}
reduces to an integral on the moduli space \( \overline{\mathcal{M}}_{g,n} \) of stable curves by
\begin{equation}
	V_{g,n}(\mathbf{L}) 
	= 
	\sum_{\alpha_{1} ,\ldots, \alpha_{n}} 
	\cor{ e^{2 \pi^2 \kappa_{1}} \prod_{i = 1}^{n} \tau_{d_{i}}}_{g} 
	\prod_{i} \frac{L^{2 d_{i}}}{2^{d_{i}} d_{i}!}.
\end{equation}
By projection formula, the integrals of monomials of $\psi-$ and $\kappa-$classes can be computed as a combination of integrals of monomials purely in terms of $\psi-$classes. In particular, monomials in $\psi-$classes and powers of \( \kappa_{1} \) can be expressed as combinations of correlators in terms of \( \tau_{k} \)'s.

\begin{align*}
	\left<e^{2 \pi^2\kappa_{1}}\right>_{g} 
	&=
	\sum_{p} \frac{1}{p!}
	\sum_{\substack{m_{1}, \ldots ,  m_{p}\\ m_{i} > 0} }
	\frac{(-1)^{\sum_{i} m_{i} + p } (2 \pi^2 )^{\sum_{i} m_{i}} }{m_{1} ! \ldots m_{p} !} 
	\cor{\prod_{j = 1}^{p} \tau_{m_{j} + 1}}_{g} = \left< e^{\sum_{k \geq 2} t_{k-1} \tau_{k}}\right>_{g}
\end{align*}
for
\begin{equation}
	t_{k} = - \frac{ (-2 \pi^2)^{k}}{k!}, \qquad k \geq 1.
	\label{eq:mirztimes}
\end{equation}
For the rest of this section \( F(t,u,\mathbf{v}) 
\) is intended with specialized times. Let us compute 
\begin{align}
	F(u,\mathbf{v})
&= 
u - 
\sum_{\substack{j,k \geq 0 \\ j+k \geq 2}} 
{t_{k+j-1}} \ \frac{u^j}{j!} \ e_k(\mathbf{v}),\\
&= 
u - 
\sum_{\substack{j,k \geq 0 \\ (j,k) \neq (0,0) }}
	{t_{k+j-1}} \ \frac{u^j}{j!} \ e_k(\mathbf{v})
+ t_{0} u + t_{0} e_{1}(\mathbf{v}),
\\
&=
- e_{1}(\mathbf{v}) +
\sum_{k \geq 0} 
e_{k}(\mathbf{v}) 
(-2\pi^2)^{k-1} 
\sum_{j \geq 0} 
\frac{(-1)^{j}}{j! (j+k-1)!} (2 \pi^2)^{j} u^{j} .
\end{align}
Taking the convention that \( 1/(-1)! = 0 \).
We can relate this to the Bessel functions,
\begin{gather}
	J_{n}(x) = 
	\sum_{j = 0}^{\infty} \frac{(-1)^{j}}{j! (j+n)!} \left( \frac{x}{2} \right)^{2j+n},
	\\
	I_{n}(x) = 
	\sum_{j = 0}^{\infty} \frac{1}{j! (j+n)!} \left( \frac{x}{2} \right)^{2j+n}.
\end{gather}
We shall also use that \( J_{-n}(x) = (-1)^{n} J_{n}(x) \). We obtain
\begin{equation}
	\label{eq:FMirz}
	F(u,\mathbf{v}) = - e_{1}(\mathbf{v}) 
	+ \sum_{k \geq 0} e_{k}(\mathbf{v}) 
	(-1)^{k+1}
	\left(\frac{\pi \sqrt{2}}{\sqrt{u}}\right)^{k-1}
	J_{k-1}(2 \pi \sqrt{2u}).
\end{equation}
Here the sum over \( k \) is  bounded by \( g \) since 
we conjecture that at most \( g \) variables of 
integration \( v_{1} ,\ldots, v_{g} \) are necessary.
Let us work out the set \( \hat{\mathfrak{u}}(F) \)
to enter into the residue formula.
The condition
\( | 1 - F_{0}(u)/u | < 1 \) is verified in a neighbourhood
of the origin, as \( F_{0}(u)/ u = \frac{1}{\pi \sqrt{2 u}} J_{1}(2 \pi \sqrt{2 u})  \) and \( \lim_{z \to 0} J_{1}(2z)/z = 1 \).
So we can keep the residue contour around \( u = 0 \).
Then, we can check that \( u=0 \) is indeed (the unique) solution
of \( F(u,\mathbf{v}) = 0 \) since
\( J_{0}(0) = 1 \) and \( J_{k}(0) = 0  \) for \( k>1 \).
Therefore \( \hat{\mathfrak{u}}(F) = \{0\} \), and only the terms 
in \( B^{-}_{g,n} \) will contribute to the residue formula
\eqref{eq:gdim}.
The first values of \( F(u, \mathbf{v}) \) for \( g = 0,1,2 \) are
\begin{align}
	F(u)
	&=
	\frac{1}{\pi} \sqrt{\frac{u}{2}} J_{1}(2 \pi \sqrt{2u})
\end{align}
\begin{equation}
	F(u,v_{1}) =
	v_{1} 
	\left(J_{0}(2 \pi \sqrt{2u})-1\right)
	+
	\frac{1}{\pi} \sqrt{\frac{u}{2}}
	J_{1}(2 \pi \sqrt{2u})
\end{equation}
\begin{align}
	F(u,v_{1}, v_{2}) 
	 &= (v_{ 1} + v_{2}) \left( J_{0}(2 \pi \sqrt{2u}) -1 \right) 
	 + \left( 1 + 2 \pi^2 \frac{v_{1}v_{2}}{u} \right) \frac{1}{\pi} 
	 \sqrt{\frac{u}{2}}
	J_{1}(2 \pi \sqrt{2u}) 
\end{align}
The last ingredient we need to relate \( A_{g,n}^{\Omega_{t}} \) to
\( V_{g,n} \) is the change of basis \( x_{i} \leftrightarrow L_{i} \).
Let
\begin{equation}
	\mathcal{L}_{x} f(L) \coloneqq
	\frac{1}{x}
	\int_{0}^{\infty} f(L) e^{- \frac{L^2}{2x}} L dL, 
	\qquad \qquad \text{ so that } \qquad 
	x_{i}^{d_{i}} = \mathcal{L}_{x_{i}} 
	\left(\frac{L^{2 d_{i}}}{2^{d_{i}} d_{i} !}\right).
	\label{eq:xtoL}
\end{equation}
In particular, we have \( V_{g,n}(\mathbf{L}) = \mathcal{L}^{-1} A_{g,n}^{\Omega_{t}} (\mathbf{x})\). We also compute
\begin{align*}
\mathcal{L}^{-1}
\prod_{m} 
e^{u x_{j}} 
(1 + v_{m} x_{j} )
&=
\mathcal{L}^{-1}
\sum_{k,\ell} \frac{u^{\ell}}{\ell!} 
e_{k}(\mathbf{v}) 
x_{i}^{k+\ell}
\\
&=
\sum_{k,\ell} \frac{u^{\ell}}{\ell!} 
e_{k}(\mathbf{v}) 
\frac{L_{i}^{2k+2\ell}}{2^{k+\ell} (k+\ell)!}
\\&=
\sum_{k}
e_{k}(\mathbf{v}) 
\left(\frac{L_{i}}{\sqrt{2u}}\right)^{k}
\sum_{\ell} 
\frac{1}{\ell!(k+\ell)!} 
\left(
	\frac{L_{i}\sqrt{2u}}{2}
\right)^{2\ell+k}
\\&=
\sum_{k}
e_{k}(\mathbf{v}) 
\left(\frac{L_{i}}{\sqrt{2u}}\right)^{k}
I_{k}(L_{i}\sqrt{2u} )
\end{align*}

\noindent
We can now specialize \eqref{eq:gdim} to the case of Weil-Petersson volumes.
\begin{theorem} 
Conjecture \ref{conj:main} implies the following integral representation of the Weil-Petersson amplitudes:
 \begin{equation}
	\label{eq:ConjMirz}
		\begin{aligned}
	&V_{g,n}(\mathbf{L}) =
	\, \underset{\mathbf{v} = 0}{\operatorname{Res}} \,\prod_{m=1}^{g}  
\frac{dv_m}{v_m} 
\, \underset{u = 0}{\operatorname{Res}} \,
du\\&\hspace{2cm}
\frac{B^{-}_{g,n}(F(u,\mathbf{v}),\mathbf{v})   
}{F(u,\mathbf{v})^{3g-2+n} }
\prod_{i = 1}^{n} 
(
\sum_{k} e_{k}(\mathbf{v}) 
\left( \frac{L_{i}}{\sqrt{2u}} \right)^{k} I_{k}(L_{i} \sqrt{2u})
)
\end{aligned}
\end{equation}
\end{theorem}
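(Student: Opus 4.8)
The plan is to derive this as the specialization of the general times-dependent integral formula of Proposition~\ref{lem:gdim} to the Weil--Petersson times, followed by the change of basis $x_i \leftrightarrow L_i$ of \eqref{eq:xtoL}. The first step is to recall, using Mirzakhani's reduction together with the projection formula expressing powers of $\kappa_1$ in terms of $\psi$-classes, that $V_{g,n}(\mathbf{L}) = \mathcal{L}^{-1} A_{g,n}^{\Omega_t}(\mathbf{x})$, where $A_{g,n}^{\Omega_t}$ is the amplitude of $\langle e^{2\pi^2\kappa_1}\prod_i\tau_{d_i}\rangle_g$ and hence, via \eqref{eq:mirztimes}, corresponds to the times $t_k = -(-2\pi^2)^k/k!$. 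Plugging these into \eqref{eq:gdim} gives a $(g+1)$-dimensional integral with a specific $F(u,\mathbf{v})$.

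The second step is to evaluate $F$ on these times: rearranging the double sum over $(j,k)$ in the definition of $F$ and matching the resulting series against the power series of the Bessel functions $J_{k-1}$ yields the closed form \eqref{eq:FMirz}, the sum over $k$ terminating at $k=g$ since there are only $g$ variables $v_m$. The third step is to show that $\hat{\mathfrak{u}}(F) = \{0\}$, so that the logarithmic/line-integral part of $\sqint$ is absent and only $B^-_{g,n}$ of \eqref{def:Bgnmin} contributes, making $\sqint du$ a simple residue at $u=0$. This reduces to two properties of the Bessel function: that $F_0(u)/u = J_1(2\pi\sqrt{2u})/(\pi\sqrt{2u}) \to 1$ as $u\to 0$, so that $|1-F_0(u)/u|<1$ holds near the origin and the $u$-contour need not be deformed; and that $u=0$ is the only zero of $F$ in that region, which follows from $J_0(0)=1$ and $J_k(0)=0$ for $k\geq 1$.

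The fourth step is to apply $\mathcal{L}^{-1}$ variable by variable. Expanding $e^{ux_i}\prod_m(1+v_mx_i)$ in powers of $x_i$, applying the transform term by term using $x_i^{d_i} = \mathcal{L}_{x_i}(L_i^{2d_i}/(2^{d_i}d_i!))$, and recognizing the resulting series gives
\begin{equation*}
\mathcal{L}^{-1}\Big(\prod_{m} e^{ux_i}(1+v_mx_i)\Big) = \sum_{k} e_k(\mathbf{v}) \Big(\frac{L_i}{\sqrt{2u}}\Big)^k I_k(L_i\sqrt{2u}).
\end{equation*}
Substituting this into \eqref{eq:gdim} with the Weil--Petersson $F$, and with $B_{g,n}$ replaced by $B^-_{g,n}$ thanks to $\hat{\mathfrak{u}}(F)=\{0\}$, produces exactly the claimed formula \eqref{eq:ConjMirz}.

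The main obstacle is analytic rather than conceptual: one must justify interchanging $\mathcal{L}^{-1}$ with the residues in $u$ and $\mathbf{v}$ and with the infinite sums, and check that the term-by-term transform converges on the relevant contours; and one must carry out carefully the zero-counting for the Bessel-type function $F(u,\mathbf{v})$, since the collapse $\hat{\mathfrak{u}}(F) = \{0\}$ is precisely what kills all the contour-deformation subtleties inherited from Proposition~\ref{lem:gdim}. Once those points are secured, the theorem is just the assembly of substituting the explicit times into machinery already established, under Conjecture~\ref{conj:main}.
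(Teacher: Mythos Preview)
Your proposal is correct and follows essentially the same approach as the paper: specialize the times in Proposition~\ref{lem:gdim} to \eqref{eq:mirztimes}, evaluate $F$ in terms of Bessel $J$-functions to obtain \eqref{eq:FMirz}, verify $\hat{\mathfrak{u}}(F)=\{0\}$ via the near-origin behaviour of $J_1$ and the values $J_0(0)=1$, $J_k(0)=0$ so that only $B^-_{g,n}$ survives, and then apply $\mathcal{L}^{-1}$ factor by factor to produce the $I_k$ sums. Your remark that the interchange of $\mathcal{L}^{-1}$ with the residues and sums deserves justification is fair; the paper treats this step formally.
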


\begin{remark}
In fact a finer statement holds: each $(g,n)$ for which conjecture \ref{conj:main} is proved implies the corresponding statement in the result above. Therefore formula \eqref{eq:ConjMirz} holds for $g \leq 7$, any $n$; and for $n \leq 3$, any $g$.
\end{remark}

\noindent
We give in the following the explicit formulae for $V_{g,n}$ for genus \( 0 \)
and \( 1 \). These formulae are actually proved, as both the main conjecture and
the formulae for $B_{g,n}$ were proved in low genus.
We applied the change of variable \( 2u \to u  \) in the residue.
\begin{equation*}
	{V}_{0,n}(L_{1} ,\ldots, L_{n})  = 
	\frac{1}{2}
	(n-3)!
	\left(2\pi \right)^{n - 2} 
	\underset{u = 0}{\operatorname{Res}} 
	\frac{du
	}{( \sqrt{u} J_{1}(2 \pi \sqrt{u}))^{n-2}} 
	\prod_{i = 1}^{n} 
	I_{0}(L_{i} \sqrt{u})
\end{equation*}
For \( g = 1 \) we can compute the residue in \( v_{1} \) explicitly.
The result is,
\begin{equation*}
		\begin{aligned}
			V_{1,n}&(L_{1} ,\ldots, L_{n}) 
			=
	\frac{1}{48} (2 \pi)^{n+1}
	\,
	\underset{u = 0}{\operatorname{Res}} \,
du 
\frac{1}{(\sqrt{u} J_{1}(2 \pi \sqrt{u}) )^{n+1}}
\prod_{j = 1}^{n} I_{0}(L_{j} \sqrt{u}) 
\\
&\bigg(n! - 
	\sum_{k=2}^{n}  (n-k)!(k-2)! \sum_{\ell = 0}^{k}
	{{n-\ell}\choose{k-\ell}}
	(\frac{\sqrt{u}}{ \pi } J_{1}(2 \pi \sqrt{u}))^{\ell} e_{\ell}(\mathbf{D}) 
	(1-J_{0}(2 \pi \sqrt{u}))^{k-\ell} 
\bigg)
\end{aligned}
\end{equation*}
\noindent
with 
$$
D_{i}(u , L_{i})  =  \frac{ (L_{i} /2) I_{1}(L_{i} \sqrt{u}) }{ \sqrt{u} I_{0}(L_{i}\sqrt{u})} .
$$
Both are checked to give rise to the right Weil-Petersson volumes. Relations 
between generating series of Weil-Petersson polynomials 
and Bessel functions can be found for instance in \cite{OkuSak} in relation to JT Gravity, in \cite{KMZ_Mirza} via combinatorial techniques, and in \cite{MirZog} 
what concerns the Weil-Petersson volumes and their large genus asymptotics.

\section{Examples of similar behaviour from other coholomogical classes}\label{sec:examples}

This section collects a few examples of cohomological classes or cohomological field theories whose amplitudes (as in the case of integrals of $Omega$-classes) show analogies with respect to the ones of the trivial CohFT, when expanded in the elementary symmetric polynomials.

\subsubsection{Chern classes of the Hodge bundle: amplitudes of $\lambda-$classes}

The algebraic geometry of moduli spaces of stable curves has several surprising properties involving its tautological cohomology classes. For example, the following conjecture was proposed by Faber in \cite{Faber} and then proved by Faber and Pandharipande in \cite{FP}. Let $\lambda_i = c_i(\mathbb{E})$ be the $i$-th Chern class of the Hodge bundle of abelian differentials. One has:
\begin{equation}
\II gn \lambda_g \psi_1^{d_1} \cdots \psi_{n}^{d_n} = \binom{2g - 3 +n}{d_1, \dots, d_n} \II g1 \lambda_g \psi_1^{2g - 2}.
\end{equation}
In fact, the integral for $n=1$ has been computed to be 
$$
b_g := \II g1 \lambda_g \psi_1^{2g - 2} = [t^{2g}]. \frac{t/2}{\sin(t/2)}.
$$
Defining 
$$
A_{g,n}^{\Omega = \lambda_g}(\textbf{x}) :=  \II gn \frac{ \lambda_g}{\prod_{i=1}^n (1 - x_i \psi_i)},
$$
it is straightforward to recast the statement above in terms of the generating series, obtaining the cleanest form
\begin{equation}
A_{g,n}^{\lambda_g} = b_g \cdot e_1^{2g - 3 +n}.
\end{equation}
In this example we see that the amplitudes of the class $\lambda_g$ allow for at most \textit{zero} factors of $e_{\lambda}$ with $\lambda > 1$ in their expansion.



Concerning lower $\lambda-$classes, one can consider and analyse the amplitudes
$$
\tilde{A}_{g,n}^{\lambda_i} = 24^g g! \cdot \II_{g,n} \frac{\lambda_i}{\prod_{i=1}^n(1 - x_i \psi_i)}
$$
in the basis of elementary symmetric polynomials.  What we know already about these amplitudes is that for $i=0$
$$
\tilde{A}_{g,n}^{\lambda_0} =  \tilde{A}_{g,n}
$$
the corresponding amplitude is addressed by the main conjecture and its expansion is expected to have at most $g$ factors $e_j$ with $j >1$. One could wonder whether $\lambda_i$ for arbitrary $i$ contraints the maximum amount of such factors to $g-i$.  Experimentally, we see that this is false.  For instance for $g=3$,  $n=3$, and $i=1$ we computed: 
\begin{align*}
\tilde{A}_{3,3}^{\lambda_1}  &= \frac{21}{5}e_1^8 - \frac{42}{5}e_{2} e_1^6 + \frac{321}{35} e_2^2 e_1^4 - \frac{18}{7} e_2^3e_1^2 - \frac{642}{35}e_3e_1^5 + \frac{912}{35} e_3 e_2 e_1^3 - \frac{72}{7}e_3e_2^2e_1 
\\
 &+ \frac{111}{35} e_3^2 e_1^2 + \frac{66}{35}e_3^2e_2,
 \\
 \tilde{A}_{3,3}^{\lambda_2}  &= \frac{41}{7}e_1^7- \frac{41}{7}e_2e_1^5 + \frac{74}{35}e_2^2e_1^3 - \frac{16}{35}e_2^3e_1 - \frac{353}{35}e_3e_1^4 + \frac{54}{7}e_3e_2e_1^2 - \frac{16}{7}e_3e_2^2 + \frac{64}{35}e_3^2e_1.
\end{align*}
Let us remark two aspects: the first is that partitions with $3$ factors $e_j$ with $j > 1$ appear,  hence there is in general no reduction of the number of factor depending on the index of the $\lambda-$class,  except for the intersection of the $\lambda_g$ where all such factors disappear at once.  It is not so surprising that $\lambda_g$ is the only $\lambda-$class playing a special role,  as it is the top degree of a CohFT and moreover it defines an isomorphism between the top tautological rings of the moduli spaces of stable curves and the moduli space of compact type (see, e.g., \cite{Pixton}). 

The other observation is that in this example the structure of the conjecture is not broken: the partition $(2,2,2,2)$ is in principle allowed for $i=1$,  as the homogeneous degree is $8$ in this case,  but in fact it does not appear.  One could think that this fact is an immediate consequence of the main conjecture,  as all intersection of $\psi-$classes are positive rational numbers and hence there is no chance to get non-trivial cancellations which get spoiled by the intersection of the $\lambda_i-$ classes.  However,  the intersection of the $\psi-$classes in $A_{g,n}$ actually takes place in top degree,  whereas in $A_{g,n}^{\lambda_i}$ the $\psi-$classes monomials are of cohomological degree $3g - 3 + n - i$,  and hence partitions with more than $g$ factors $e_j$ with $j >1$ could in principle appear.

\subsubsection{$Omega$-classes}
The $Omega$-class $\Omega^{[x]}(r,s;a_1, \dots, a_n)$ has been discussed in Section \ref{sec:omega} and it is proved to provide a CohFT with flat unit whenever $0 \leq s \leq r$. This is the case for the parametrisation $\Omega^{[e_1]}_{g,n}(e_1 ,e_1 ; e_1^- )$ used in this work.  We have also observed that
\begin{equation}
\II{g}{n} \frac{ e_1 \Omega^{[e_1]}_{g,n}(e_1 ,e_1 ; e_1^- )}{\prod_{i=1}(1 - x_i \psi_i)} = e_1^{3g - 3 +n}\cdot P_{g,n}(x_1, \dots, x_n),
\end{equation}
where $P_{g,n}$ is a known polynomial of degree $2g$ in the $x_i$ which is in fact a polynomials in the $x_i^2$.  What is however relevant for the discussion here is that the degree $2g$ does not allow for any term of the form $e_{\lambda_i} \cdots e_{\lambda_{g+1}}$ in which each $\lambda_i > 1$ in the expansion of the RHS.  This is conforming to the principle of the main conjecture.  In addition, this case guarantees a minimal power of $e_1$, which the amplitudes $A_{g,n}$ for the trivial CohFT do not have.

\subsubsection{Monotone Hurwitz numbers}

Monotone Hurwitz numbers also enumerate branched coverings of the Riemann sphere with prescribed ramifications over zero by a partition $\mu$, and the remaning $2g - 2 + \ell(\mu) + |\mu|$ simple ramifications satisfy the following monotonic property: label the cover sheets, represent each simple ramification by a transposition $(a_i \; b_i)$ written such that $1 \geq a_i < b_i \geq |\mu|$, then impose the condition $b_i \geq b_{i+1}$ for $i = 1, \dots, 2g - 3 + \ell(\mu) + |\mu|$.

Analogously to simple Hurwitz numbers, monotone Hurwitz numbers are known to satisfy an explicit ELSV-type formula and are computed explicitly in genus one and in genus zero. 

The ELSV formula reads

\begin{equation}\label{eq:ELSVmonotone}
\frac{h^{\leq}_{g, \mu}}{(n + d)!} = \prod_{i=1}^\ell \binom{2\mu_i}{\mu_i} \sum_{d_1, \dots, d_n = 0}\int_{\MMMbar_{g, n}} \!\!\!\!\!\Omega^{\leq}_{g,n} \;\; \psi_1^{d_1} \cdots \psi_\ell^{d_n} \prod_{i=1}^\ell \frac{ (2(d_i + \mu_i) - 1)!!}{(2 \mu_i - 1)!!}
\end{equation}
where the intersecting class is
\begin{equation}
 \Omega_{g,n}^{\leq} := e^{\sum_{p=1} A_p \kappa_p }, \qquad \qquad \exp\left( - \sum_{p=1} A_p U^p \right) = \sum_{k=0} (2k + 1)!! U^k.
\end{equation}
On the other hand, the Goulden--Guay-Paquet--Novak formula for monotone Hurwitz numbers in genus zero reads
\begin{equation}
h^{\leq}_{0, \mu} = \prod_{i=1}^\ell \binom{2\mu_i}{\mu_i} 
\frac{\left(2\left(\sum_i \mu_i \right) + \ell - 3\right)!}{\left(2 \left(\sum_i \mu_i \right)\right)!}
\end{equation}
Putting the two together and setting $x_i = 2\mu_i + 1$ we obtain 
\begin{equation}
\sum_{d_1, \dots, d_n = 0}\int_{\MMMbar_{0, n}} \!\!\!\!\! \Omega^{\leq} \;\; \psi_1^{d_1} \cdots \psi_n^{d_\ell} \prod_{i=1}^n \frac{ (2(d_i-1) + x_i)!!}{(x_i - 2)!!}
= 
\frac{\left(e_1 - 3\right)!}{\left( e_1  - n \right)!}
\end{equation} 
It is clear the the RHS of the formula above is a non homogeneous polynomial in $e_1$ of degree $n-3$. There are no $e_j$ involved in the expression for $j>1$, again agreeing with the general principle of the main conjecture \ref{conj:main}.

\noindent
In genus one, the Goulden--Guay-Paquet--Novak formula reads
\begin{equation*}
h^{\leq}_{1, \mu} = \prod_{i=1}^n \binom{2\mu_i}{\mu_i} \left[
\frac{e_1!}{(e_1 - n)!} - 3\frac{(e_1 - 1)!}{(e_1 - n)!} - \sum_{k=1}^n (k-2)! \frac{(e_1 - k)!}{(e_1 - n)!} e_k
\right]
\end{equation*}

Therefore we obtain:
\begin{multline}
\sum_{d_1, \dots, d_n = 0}\int_{\MMMbar_{1, n}} \!\!\!\!\! \Omega^{\leq} \;\; \psi_1^{d_1} \cdots \psi_n^{d_n} \prod_{i=1}^n \frac{(2(d_i-1) + x_i)!!}{(x_i - 2)!!}= \\
=
\frac{e_1!}{(e_1 - n)!} - 3\frac{(e_1 - 1)!}{(e_1 - n)!} - \sum_{k=1}^n (k-2)! \frac{(e_1 - k)!}{(e_1 - n)!} e_k.
\end{multline}
By considering the free energies 
\begin{equation}
A_{g,n}^{\Omega^{\leq}}(\textbf{x}) := \sum_{d_1, \dots, d_n = 1}\int_{\MMMbar_{g, n}} \!\!\!\!\! \Omega^{\leq} \;\; \psi_1^{d_1} \cdots \psi_n^{d_n} \prod_{i=1}^n \frac{(2(d_i-1) + x_i)!!}{(x_i - 2)!!}
\end{equation}
we see that once more the general principle of the conjecture \ref{conj:main} arises: each ratio of factorials is a non-homogeneous polynomial in $e_1$ of degree $n, n-1$ and $n-k$ respectively, and only summands of the form $e_1^a$ or $e_1^ae_k^b$ appears, as expected from a genus one expression. The non-homogeneity is also non surprising, since the class $\Omega^{\leq}$ contains non-trivial contributions in each cohomological degree.


\newpage


\appendix
\section{Table of coefficients $D_{g,n}(\Lambda)$ for $g = 3$ and $n \leq  5$}
\label{sec:Dgnappendix}
We collect below the data of the normalised coefficients $24^g g!C_{g,n}(\lambda)$ for \( g = 3 \), $n \leq 5$, 
and for all \( \lambda \leq 3g-3+n \).

In front of each coefficient we provide the value of
\( 3g-3 - (|\lambda| - \lambda_{1}) \) 
in boldface: whenever this value is negative, we get 
vanishing coefficients (highlighted
in \highlightg{green}), illustrating the vanishing predicted by lemma \ref{lem:1}; whenever \( \ell(\lambda) > g \), we get 
vanishing coefficients (highlighted
in \highlightr{red}), illustrating the vanishing predicted by the main conjecture.

\begin{align*}
\intertext{(g,n) = (3,1)}
	&\textbf{7} \, &D_{3,1}(1, 1, 1, 1, 1, 1, 1) &= 1
	&
& & &
\intertext{(g,n) = (3,2)}
	&\textbf{7} \, &D_{3,2}(1, 1, 1, 1, 1, 1, 1, 1) &= 1
	&
&\textbf{2} \, &D_{3,2}(2, 2, 2, 1, 1) &= -27/7
\\
&\textbf{6} \, &D_{3,2}(2, 1, 1, 1, 1, 1, 1) &= -3
&
&\textbf{0} \, &D_{3,2}\highlightr{(2, 2, 2, 2)} &= 0
\\
&\textbf{4} \, &D_{3,2}(2, 2, 1, 1, 1, 1) &= 27/5
&
\intertext{(g,n) = (3,3)}
	&\textbf{7} \, &D_{3,3}(1, 1, 1, 1, 1, 1, 1, 1, 1) &= 1
	&
&\textbf{2} \, &D_{3,3}(3, 2, 2, 1, 1) &= -81/7
\\
&\textbf{6} \, &D_{3,3}(2, 1, 1, 1, 1, 1, 1, 1) &= -3
&
&\textbf{3} \, &D_{3,3}(3, 3, 1, 1, 1) &= 153/35
\\
&\textbf{6} \, &D_{3,3}(3, 1, 1, 1, 1, 1, 1) &= -39/5
&
&\textbf{0} \, &D_{3,3}\highlightr{(2, 2, 2, 2, 1)} &= 0
\\
&\textbf{4} \, &D_{3,3}(2, 2, 1, 1, 1, 1, 1) &= 27/5
&
&\textbf{1} \, &D_{3,3}(3, 3, 2, 1) &= 27/7
\\
&\textbf{4} \, &D_{3,3}(3, 2, 1, 1, 1, 1) &= 594/35
&
&\textbf{0} \, &D_{3,3}\highlightr{(3, 2, 2, 2)} &= 0
\\
&\textbf{2} \, &D_{3,3}(2, 2, 2, 1, 1, 1) &= -27/7
&
&\textbf{0} \, &D_{3,3}(3, 3, 3) &= -27/35
\intertext{(g,n) = (3,4)}
&\textbf{7} \, &D_{3,4}(1, 1, 1, 1, 1, 1, 1, 1, 1, 1) &= 1
&\hspace{10pt} 
&\textbf{1} \, &D_{3,4}(3, 3, 2, 1, 1) &= 27/7
\\
&\textbf{6} \, &D_{3,4}(2, 1, 1, 1, 1, 1, 1, 1, 1) &= -3
&
&\textbf{2} \, &D_{3,4}(4, 2, 2, 1, 1) &= -54
\\
&\textbf{6} \, &D_{3,4}(3, 1, 1, 1, 1, 1, 1, 1) &= -39/5
&
&\textbf{0} \, &D_{3,4}\highlightr{(3, 2, 2, 2, 1)} &= 0
\\
&\textbf{4} \, &D_{3,4}(2, 2, 1, 1, 1, 1, 1, 1) &= 27/5
&
&\textbf{3} \, &D_{3,4}(4, 3, 1, 1, 1) &= 324/35
\\
&\textbf{4} \, &D_{3,4}(3, 2, 1, 1, 1, 1, 1) &= 594/35
&
&\textbf{0} \, &D_{3,4}\highlightr{(4, 2, 2, 2)} &= 0
\\
&\textbf{6} \, &D_{3,4}(4, 1, 1, 1, 1, 1, 1) &= -594/35
&
&\textbf{2} \, &D_{3,4}(4, 4, 1, 1) &= -1152/35
\\
&\textbf{2} \, &D_{3,4}(2, 2, 2, 1, 1, 1, 1) &= -27/7
&
&\textbf{0} \, &D_{3,4}(3, 3, 3, 1) &= -27/35
\\
&\textbf{2} \, &D_{3,4}(3, 2, 2, 1, 1, 1) &= -81/7
&
&\textbf{-1} \, &D_{3,4}\highlightg{(3, 3, 2, 2)} &= 0
\\
&\textbf{4} \, &D_{3,4}(4, 2, 1, 1, 1, 1) &= 1692/35
&
&\textbf{1} \, &D_{3,4}(4, 3, 2, 1) &= 108/5
\\
&\textbf{3} \, &D_{3,4}(3, 3, 1, 1, 1, 1) &= 153/35
&
&\textbf{0} \, &D_{3,4}(4, 4, 2) &= 144/35
\\
&\textbf{0} \, &D_{3,4}\highlightr{(2, 2, 2, 2, 1, 1)} &= 0
&
&\textbf{0} \, &D_{3,4}(4, 3, 3) &= -162/35
\\
&\textbf{-2} \, &D_{3,4}\highlightg{(2, 2, 2, 2, 2)} &= 0
&
\\
\intertext{(g,n) = (3,5)}
&\textbf{7} \, &D_{3,5}(1, 1, 1, 1, 1, 1, 1, 1, 1, 1, 1) &= 1
&
&\textbf{0} \, &D_{3,5}\highlightr{(4, 2, 2, 2, 1)} &= 0
\\
&\textbf{6} \, &D_{3,5}(2, 1, 1, 1, 1, 1, 1, 1, 1, 1) &= -3
&
&\textbf{-2} \, &D_{3,5}\highlightg{(3, 2, 2, 2, 2)} &= 0
\\
&\textbf{4} \, &D_{3,5}(2, 2, 1, 1, 1, 1, 1, 1, 1) &= 27/5
&
&\textbf{0} \, &D_{3,5}(3, 3, 3, 1, 1) &= -27/35
\\
&\textbf{6} \, &D_{3,5}(3, 1, 1, 1, 1, 1, 1, 1, 1) &= -39/5
&
&\textbf{2} \, &D_{3,5}(4, 4, 1, 1, 1) &= -1152/35
\\
&\textbf{2} \, &D_{3,5}(2, 2, 2, 1, 1, 1, 1, 1) &= -27/7
&
&\textbf{2} \, &D_{3,5}(5, 2, 2, 1, 1) &= -1458/5
\\
&\textbf{6} \, &D_{3,5}(4, 1, 1, 1, 1, 1, 1, 1) &= -594/35
&
&\textbf{3} \, &D_{3,5}(5, 3, 1, 1, 1) &= 2844/35
\\
&\textbf{4} \, &D_{3,5}(3, 2, 1, 1, 1, 1, 1, 1) &= 594/35
&
&\textbf{1} \, &D_{3,5}(4, 3, 2, 1, 1) &= 108/5
\\
&\textbf{4} \, &D_{3,5}(4, 2, 1, 1, 1, 1, 1) &= 1692/35
&
&\textbf{-1} \, &D_{3,5}\highlightg{(4, 3, 2, 2)} &= 0
\\
&\textbf{2} \, &D_{3,5}(3, 2, 2, 1, 1, 1, 1) &= -81/7
&
&\textbf{0} \, &D_{3,5}(4, 4, 2, 1) &= 144/35
\\
&\textbf{3} \, &D_{3,5}(3, 3, 1, 1, 1, 1, 1) &= 153/35
&
&\textbf{0} \, &D_{3,5}\highlightr{(5, 2, 2, 2)} &= 0
\\
&\textbf{0} \, &D_{3,5}\highlightr{(2, 2, 2, 2, 1, 1, 1)} &= 0
&
&\textbf{0} \, &D_{3,5}(4, 3, 3, 1) &= -162/35
\\
&\textbf{6} \, &D_{3,5}(5, 1, 1, 1, 1, 1, 1) &= -2286/35
&
&\textbf{1} \, &D_{3,5}(5, 3, 2, 1) &= 4572/35
\\
&\textbf{-2} \, &D_{3,5}\highlightg{(2, 2, 2, 2, 2, 1)} &= 0
&
&\textbf{2} \, &D_{3,5}(5, 4, 1, 1) &= -5904/35
\\
&\textbf{2} \, &D_{3,5}(4, 2, 2, 1, 1, 1) &= -54
&
&\textbf{-2} \, &D_{3,5}\highlightg{(3, 3, 3, 2)} &= 0
\\
&\textbf{3} \, &D_{3,5}(4, 3, 1, 1, 1, 1) &= 324/35
&
&\textbf{1} \, &D_{3,5}(5, 5, 1) &= 432/5
\\
&\textbf{0} \, &D_{3,5}\highlightr{(3, 2, 2, 2, 1, 1)} &= 0
&
&\textbf{0} \, &D_{3,5}(5, 4, 2) &= 144/5
\\
&\textbf{4} \, &D_{3,5}(5, 2, 1, 1, 1, 1) &= 8532/35
&
&\textbf{0} \, &D_{3,5}(5, 3, 3) &= -162/5
\\
&\textbf{1} \, &D_{3,5}(3, 3, 2, 1, 1, 1) &= 27/7
&
&\textbf{-1} \, &D_{3,5}\highlightg{(4, 4, 3)} &= 0
\\
&\textbf{-1} \, &D_{3,5}\highlightg{(3, 3, 2, 2, 1)} &= 0
&
\end{align*}
\newpage

\begin{landscape}
\section{Table of all coefficients $C_g(\lambda)$ for $g =4$}
\label{sec:Cgappendix}
Let us apply corollary \ref{lem:poly} to compute \( 24^g g! C_{g}(k, \mu) \) for $g=4$, for all \( \mu \in \mathcal{Q}_{g} \) and for all \( k \) . We moreover verify numerically the lower bound \( k_{0} = 3g-2-|\mu| + \mu_{1} - \delta_{|\mu|, 3g-3} \) for the polynomial behaviour of $C_4(k,\mu)$ in $k$. There are $30$ partitions to consider:
	\begin{scriptsize}
\begin{align*}
	\mathcal{Q}_4 = 
	\{
		&\varnothing, (2), (3), (4), (5), (6), (7), (8), (9), (2, 2), (3,2),\\
		&(4, 2), (3, 3), (5, 2), (4, 3), (6, 2), (5, 3), (4, 4), (7,2),\\
		&(6, 3), (5, 4), (2, 2, 2), (3, 2, 2), (4, 2, 2), (3, 3, 2),\\
		&(5, 2, 2), (4, 3, 2), (3, 3, 3), (2, 2, 2, 2), (3, 2, 2, 2)
\}
\end{align*}
\vspace{10pt}
	\end{scriptsize}
	Start with partitions of length \( \ell(\mu) \leq 1 \), here \( k_{0} = 10 \) 
	except for the max size partition \( (9) \) where \( k_{0} = 9 \).
	\begin{scriptsize}
\begin{align*}	
	C_4(\emptyset) &= 1	 	& C_4(2,(2))  &= 54/5 		& C_4(3,(3)) &= 4842/175 	&C_4(4,(4)) &= -27648/175 	& C_4(5,(5)) &= -9792/35 \\
C_4(2,\varnothing) &= -4                & C_4(3,(2))  &= 324/7          & C_4(4,(3)) &= 14184/175       &C_4(5,(4)) &= -182592/175      & C_4(6,(5)) &= -51264/175 \\
C_4(3,\varnothing) &= -68/5             & C_4(4,(2))  &= 21816/175      & C_4(5,(3)) &= 105768/175      &C_4(6,(4)) &= -36288/5         & C_4(7,(5)) &= -269568/25 \\
C_4(4,\varnothing) &= -1144/35          & C_4(5,(2))  &= 17064/25       & C_4(6,(3)) &= 104112/25       &C_4(7,(4)) &= -11405952/175    & C_4(8,(5)) &= -10245888/175 \\
C_4(5,\varnothing) &= -21816/175        & C_4(6,(2))  &= 117648/25      & C_4(7,(3)) &= 227952/7        &C_4(8,(4)) &= -22470912/35     & C_4(9,(5)) &= -47778048/175 \\
C_4(6,\varnothing) &= -141264/175       & C_4(7,(2))  &= 266832/7       & C_4(8,(3)) &= 51469056/175    &C_4(9,(4)) &= -176332032/25 \\
C_4(7,\varnothing) &= -1106064/175      & C_4(8,(2))  &= 12432384/35    & C_4(9,(3)) &= 521255808/175 \\
C_4(8,\varnothing) &= -9988992/175      & C_4(9,(2))  &= 93011328/25 \\
C_4(9,\varnothing) &= -102117888/175
\end{align*}
\end{scriptsize}
\\[10pt]
\begin{scriptsize}
\begin{gather*}
C_4(k,\varnothing) = - \frac{ \left(k - 5\right)! }{2270268000}
\big(3717 k^{9} + 345264 k^{8} + 10022652 k^{7} + 51871810 k^{6} - 1143710229 k^{5} + 8585898070 k^{4} 
- 52358293308 k^{3} + 196752773416 k^{2} - 387514181568 k + 314153693568\big)
\\
C_4(k,(2))= \frac{\left(k - 3\right)!}{756756000}
\left(11151 k^{7} + 998761 k^{6} + 29709011 k^{5} + 269630259 k^{4} - 712790674 k^{3} + 5966416652 k^{2} - 20766829960 k + 26179474464\right) 
\\
C_4(k,(3)) = \frac{\left(k - 2\right)!}{756756000}
\left(723 k^{6} + 199787 k^{5} + 16387577 k^{4} + 413083225 k^{3} - 51459908 k^{2} + 3514829748 k - 1065697776\right) 
\\
C_4(k,(4)) = - \frac{\left(k - 1\right)!}{378378000}
\left(25569 k^{5} + 1771714 k^{4} + 37277747 k^{3} + 274984058 k^{2} + 184540456 k + 1946520576\right) 
\\
C_4(k,(5)) = \frac{k!}{378378000}
\left(12093 k^{4} + 930010 k^{3} + 14966271 k^{2} - 218527846 k - 287488200\right) 
\end{gather*}
\end{scriptsize}
\\[10pt]
\begin{scriptsize}
\begin{align*}
C_4(6,(6)) &= 1787904/175 	& C_4(7,(7)) &= -13825152/175 		& C_4(8,(8)) &= 2078208/7 \\
C_4(7,(6)) &= 14375808/175      & C_4(8,(7)) &= -28266624/35            & C_4(9,(8)) &= 24938496/7 \\
C_4(8,(6)) &= 157378176/175     & C_4(9,(7)) &= -339110784/35 \\
C_4(9,(6)) &= 49168512/5\\
C_4(k,(6)) & = \frac{\left(k + 1\right)!}{15765750}
\left(513 k^{3} + 78137 k^{2} + 2328884 k + 15061740\right) 
	   &
C_4(k,(7)) &= - \frac{\left(k + 2\right)!}{7882875}
\left(2011 k^{2} + 66173 k + 1154393\right) 
\end{align*}
\end{scriptsize}
\\[10pt]
\begin{scriptsize}
	\begin{gather*}
\\
\\
C_4(k,(8)) = \frac{\left(k + 3\right)!}{1433250} \left(99 k + 9868\right) 
\\
C_4(k,(9)) = - \frac{11 \left(k + 4\right)!}{159250}
	\end{gather*}
\end{scriptsize}
\\[10pt]
\begin{scriptsize}
\begin{align*}
C_4(2,(2, 2)) &= -108/7 	& C_4(3,(3, 2)) &= -180/7 	& C_4(4,(4, 2)) &= 2880/7 	& C_4(3,(3, 3)) &= -36/5 	& C_4(5,(5, 2)) &= -242496/175 \\
C_4(3,(2, 2)) &= -468/7         & C_4(4,(3, 2)) &= -1584/35     & C_4(5,(4, 2)) &= 91008/35     & C_4(4,(3, 3)) &= -216/5       & C_4(6,(5, 2)) &= -409536/35 \\
C_4(4,(2, 2)) &= -1800/7        & C_4(5,(3, 2)) &= -24624/35    & C_4(6,(4, 2)) &= 3836736/175  & C_4(5,(3, 3)) &= -8712/35     & C_4(7,(5, 2)) &= -19225728/175 \\
C_4(5,(2, 2)) &= -57096/35      & C_4(6,(3, 2)) &= -31392/7     & C_4(7,(4, 2)) &= 34399872/175 & C_4(6,(3, 3)) &= -51408/25 \\
C_4(6,(2, 2)) &= -410544/35     & C_4(7,(3, 2)) &= -5870304/175 \\
C_4(7,(2, 2)) &= -3450672/35 \\
\end{align*}
\end{scriptsize}
\\[10pt]
\begin{scriptsize}
	\begin{gather*}
C_4(k,(2, 2)) = - \frac{\left(1515 k^{5} + 125102 k^{4} + 3592493 k^{3} + 38382106 k^{2} + 33387016 k + 313049088\right) \left(k - 1\right)!}{29106000}
\\
C_4(k,(3, 2)) = \frac{\left(450 k^{4} + 24362 k^{3} + 67239 k^{2} - 7592459 k - 8017332\right) k!}{7276500}
\\
C_4(k,(4, 2)) = \frac{\left(371 k^{3} + 20182 k^{2} + 325180 k + 2519512\right) \left(k + 1\right)!}{1212750}
\\
C_4(k,(3, 3)) = - \frac{\left(69 k^{3} + 3077 k^{2} - 14 k + 233736\right) \left(k + 1\right)!}{882000}
\\
C_4(k,(5, 2)) = - \frac{\left(389 k^{2} + 29593 k + 509174\right) \left(k + 2\right)!}{2425500}
	\end{gather*}
\end{scriptsize}
\\[10pt]
\begin{scriptsize}
\begin{align*}
C_4(4,(4, 3)) &= -1152/35 	& C_4(6,(6, 2)) &= -10368/25 & C_4(5,(5, 3)) &= 119232/175 & C_4(4,(4, 4)) &= -6912/175 \\
C_4(5,(4, 3)) &= -22464/175      & C_4(7,(6, 2)) &= -13824/7 & C_4(6,(5, 3)) &= 211392/35 & C_4(5,(4, 4)) &= -55296/175 \\
C_4(6,(4, 3)) &= -333504/175 & \\
C_4(k,(4, 3)) &= 
- \frac{\left(27 k^{2} + 671 k + 5292\right) \left(k + 2\right)!}{220500}
	      &
C_4(k,(6, 2)) &= \frac{\left(191 k - 2532\right) \left(k + 3\right)!}{1212750}
	      &
C_4(k,(5, 3)) &= \frac{\left(38 k + 1673\right) \left(k + 3\right)!}{110250}
	      &
C_4(k,(4, 4)) &= - \frac{2 \left(7 k + 188\right) \left(k + 3\right)!}{55125}
\end{align*}
\end{scriptsize}
\\[10pt]
\begin{scriptsize}
\begin{align*}
C_4(k,(7, 2)) &= \frac{107 \left(k + 4\right)!}{242550}
	      &
C_4(k,(6, 3)) &= - \frac{11 \left(k + 4\right)!}{18375}
	      &
C_4(k,(5, 4)) &= \frac{2 \left(k + 4\right)!}{7875}
\end{align*}
\end{scriptsize}
\\[10pt]
\begin{scriptsize}
\begin{align*}
C_4(2,(2, 2, 2)) &= 9 	&C_4(3,(3, 2, 2)) &= -18		& C_4(4,(4, 2, 2)) &= -1728/35 & C_4(3,(3, 3, 2)) &= 36/5 \\
C_4(3,(2, 2, 2)) &= 36  &C_4(4,(3, 2, 2)) &= -648/5 		& C_4(5,(4, 2, 2)) &= -2880/7 & C_4(4,(3, 3, 2)) &= 1944/35 \\
C_4(4,(2, 2, 2)) &= 216 &C_4(5,(3, 2, 2)) &= -32184/35 \\
C_4(5,(2, 2, 2)) &= 7128/5 \\
\end{align*}
\end{scriptsize}
\\[10pt]
\begin{scriptsize}
\begin{gather*}
C_4(k,(2, 2, 2)) = \frac{\left(35 k^{3} + 2499 k^{2} + 65830 k + 651864\right) \left(k + 1\right)!}{529200}
\\
C_4(k,(4, 2, 2)) = - \frac{\left(7 k + 188\right) \left(k + 3\right)!}{22050}
\\
C_4(k,(3, 2, 2)) = - \frac{\left(35 k^{2} + 1701 k + 22788\right) \left(k + 2\right)!}{176400}
\\
C_4(k,(3, 3, 2)) = \frac{\left(7 k + 180\right) \left(k + 3\right)!}{19600}
\end{gather*}
\end{scriptsize} 
\\[10pt]
\begin{scriptsize}
\begin{align}
C_4(k,(5, 2, 2)) &= - \frac{\left(k + 4\right)!}{11025}
		 &
C_4(k,(4, 3, 2)) &= \frac{\left(k + 4\right)!}{2450}
		 &
C_4(k,(3, 3, 3)) &= - \frac{\left(k + 4\right)!}{3920}
		 &
		 \end{align}
	 \end{scriptsize}
	 \\[10pt]
	 \begin{scriptsize}
		 \begin{align}
C_4(k,(2, 2, 2, 2)) &= 0
		    &
C_4(k,(3, 2, 2, 2)) &= 0
\end{align}
\end{scriptsize}
Therefore we again verify the conjecture for \( g = 4 \),
with \( C_{g}((k,2,2,2,2) = 0 \) for all \( k \geq 2 \)  
and \( C_{4}(k,3,2,2,2) = 0 \) 
for all \( k \geq 3\).
\end{landscape}
\newpage


%

\printbibliography

\end{document}